\documentclass{IEEEtran}
\usepackage{cite}
\usepackage{amsmath,amssymb,amsfonts}
\usepackage{algorithm}

\usepackage{graphicx}
\usepackage{textcomp}
\usepackage{bm}
\usepackage{mathrsfs}
\usepackage{graphicx,cite}
\usepackage{amsmath}
\usepackage{amssymb}
\usepackage[dvips]{epsfig}
\usepackage{latexsym}
\usepackage{psfrag}
\usepackage{graphicx,subfigure}
\usepackage{hyperref}
\usepackage{color}
\usepackage{amsfonts,mathrsfs,amsmath,stmaryrd,amssymb,pifont}%
\usepackage{amscd,graphicx,array,texdraw,tikz}%
\usetikzlibrary{arrows,decorations.markings,shapes,shadows,fadings}
\usepackage{mathrsfs}%
\usepackage{array}
\usepackage{enumerate}
\usepackage{times}
\usepackage{ntheorem}
\usepackage{algorithm}
\usepackage{algpseudocode}
\usepackage{amsmath}
\usepackage{booktabs}  
\usepackage{xcolor}    
\usepackage{pifont}    
\usepackage{multirow}  
\usepackage{amsmath}   
\theoremseparator{.}
\newtheorem{definition}{Definition}
\newtheorem{lemma}{Lemma}
\newtheorem{theorem}{Theorem}
\newtheorem{remark}{Remark}
\newtheorem{assumption}{Assumption}

\newtheorem{proof}{Proof}[section]

\allowdisplaybreaks


\begin{document}

\title{Decentralized online stochastic generalized Nash Equilibrium seeking for multi-cluster games: A Byzantine-resilient algorithm}
\author{Bingqian Liu, Guanghui Wen, \IEEEmembership{Senior Member, IEEE}, Liyuan Chen, Yiguang Hong, \IEEEmembership{Fellow, IEEE}
    \thanks{This work was supported in part by the National Key Research and Development Program of China under Grant No. 2022YFA1004702, and  in part by the National Natural Science Foundation of China through Grant No. 62325304.}
    \thanks{B. Liu, G. Wen and L. Chen are with the Laboratory of Security Operation and
Control for Intelligent Autonomous Systems, Department of Systems
Science, School of Mathematics, Southeast University, Nanjing 211189,
P. R. China (e-mails: bqliu@seu.edu.cn; ghwen@seu.edu.cn; clymath@seu.edu.cn).}
	\thanks{Y. Hong is with the Department of Control Science and Engineering and the Shanghai Research Institute for Intelligent Autonomous Systems, Tongji University, Shanghai 200092, P. R. China (e-mail: yghong@iss.ac.cn).}
}

\maketitle

\begin{abstract}
This paper addresses the challenge of solving the generalized Nash Equilibrium seeking problem for decentralized stochastic online multi-cluster games amidst Byzantine agents. During the game process, each honest agent is influenced by both randomness and malicious information propagated by Byzantine agents. Additionally, none of the agents have prior knowledge about the number and identities of Byzantine agents. Furthermore, the stochastic local cost function and coupled global constraint function are only revealed to each agent in hindsight at each round. One major challenge in addressing such an issue is the stringent requirement for each honest agent to effectively mitigate the effect of decision variables of Byzantine agents on its local cost functions. To overcome this challenge, a decentralized Byzantine-resilient algorithm for online stochastic generalized Nash equilibrium (SGNE) seeking is developed, which combines variance reduction, dynamic average consensus, and robust aggregation techniques. Moreover, novel resilient versions of system-wide regret and constraint violation are proposed as metrics for evaluating the performance of the online algorithm. Under certain conditions, it is proven that these resilient  metrics grow sublinearly over time in expectation. Numerical simulations are conducted to validate the theoretical findings.
\end{abstract}
\begin{IEEEkeywords}
Online multi-cluster game, generalized Nash equilibrium, Byzantine attack, variance reduction, robust aggregation mechanism.
\end{IEEEkeywords}

\section{Introduction}
Generalized Nash equilibrium (GNE) seeking problems have garnered extensive attention in recent years due to their broad applicability across various fields, including smart grids \cite{Deng_2022}, optical networks \cite{Pan_2009}, and economic activity \cite{Sun_2021}. Note that the cost functions and constraint functions that rely on other agents' decisions must be determined. However, most practical problems necessitate the consideration of uncertainties, such as electricity markets with unknown demand \cite{Henrion_2007}, as well as transportation systems with unpredictable travel times \cite{Watling_2006}. Under these circumstances, the SGNE problem has been investigated, where the objectives and constraints of each agent are conceptualized in terms of expectations. This means that uncertainties are mathematically treated as random variables.

Solving SGNE seeking problems is inherently challenging, as the expected values of the objectives and constraints are often inaccessible in exact form or not amenable to efficient computation. A distributed relaxed-inertial forward-backward-forward algorithm for SGNE seeking was proposed in \cite{Cui_2021}, which employs agent-specific mini-batch gradient estimators to address uncertainty. However, due to the limited sample number in each iteration, this approach may suffer from high variance in gradient estimation, which can hinder convergence or slow down the decision-making process. To mitigate this difficulty, variance reduction techniques such as the stochastic variance reduced gradient (SVRG) method \cite{Johnson_2013} and the stochastic average gradient algorithm (SAGA) \cite{Defazio_2014} are commonly utilized. Alternatively, the variance can also be reduced by increasing the number of samples used in each iteration. As demonstrated in \cite{Franci_2022}, this approach is systematically implemented in their proposed stochastic relaxed forward-backward algorithm, which enables distributed SGNE seeking in merely monotone games via a variance-reduced stochastic approximation (SA) scheme. Furthermore, beyond stochasticity, the objectives and constraints may also exhibit temporal variability and may only be revealed in hindsight. This is referred to as the online settings. Specifically, an online multi-cluster game framework was introduced in \cite{Yu_2023}. Nevertheless, the study of online multi-cluster games under stochastic environments remains largely unexplored.

\begin{table*}[t]  
\centering
\caption{Comparison of this paper to some related works involving Byzantine agents}
\label{tab:comparison}
\begin{tabular}{lcccccccc}
\specialrule{1.2pt}{0pt}{3pt} 
References& Problem type & Constraints& Filtering method& Variance reduction & Regret type \\
\specialrule{0.8pt}{0pt}{3pt} 
\cite{Su_2021} & Distributed optimization & \ding{55}& Coordinate-wise trimmed mean & -- & -- \\
\cite{Wu_2020}& Finite-sum optimization &\ding{55}&Geometric median aggregation& SAGA & --\\
\cite{Wu_2023} & Decentralized stochastic optimization  &\ding{55} & Iterative outlier scissor& \ding{55} & --  \\
\cite{Sahoo_2021} & Distributed online optimization & \ding{55} & Coordinate-wise trimmed mean & --&Static regret\\
\cite{Dong_2024} & Distributed online stochastic optimization & \ding{55} & Robust bounded aggregation& \ding{55} & Static regret \\
\cite{Gadjov_2023} & Noncooperative games & \ding{51} & Observation
graph& \ding{55} & -- \\
This paper & Online multi-cluster games  &\ding{51} & Coordinate-wise trimmed mean& SVRG& Dynamic regret  \\
\specialrule{1.2pt}{3pt}{0pt} 
\end{tabular}
\end{table*}

A common underlying assumption in the aforementioned studies is that agents behave reliably. However, in practice, some agents known as Byzantine agents may send malicious messages to their neighbors, causing the decision-making process to be potentially misled. Meanwhile, the honest agents are collectively unaware of the number and identities of the Byzantine agents. Indeed, most of the existing works under Byzantine attacks focus on optimization problems \cite{Su_2021,Karimireddy_2021,Sahoo_2021,Wu_2020,Wu_2023,Dong_2024} rather than game problems \cite{Gadjov_2023,Peng_2024}. The main reason is that the objectives of each agent only depend on itself in the former, and thus, a robust optimization problem can be formulated to prevent Byzantine agents from lying arbitrarily about their own cost functions. However, there are other agents' decision variables in the arguments of the objectives of each agent in the latter. In this case, the influence of Byzantine agents cannot be avoided, even when only the objectives of honest agents are considered.To tackle the issue of how to determine whether the messages received from neighbors of each honest agent is correct, an observation graph was proposed in \cite{Gadjov_2023} to obtain truthful action information. In \cite{Peng_2024}, a centralized operator was utilized to compute aggregation via the geometric median rule for aggregative games under Byzantine attacks.

Motivated by existing research, this paper presents a general framework that integrates SGNE seeking with online multi-cluster games under Byzantine attacks. Specifically, the function information is revealed to each agent only after the decisions have been sequentially committed. At each time step, each honest agent aims to minimize the total expected local cost functions of all honest agents in its cluster, subject to local set constraints and expected coupled global inequality constraints. Additionally, the number and identities of the Byzantine agents are unknown. The main contributions of this paper can be summarized as follows:
 
\begin{itemize}
\item[1)] The decentralized SGNE seeking problem in online multi-cluster games amidst Byzantine attacks is studied for the first time. One of the main difficulties is that the objectives and constraints can be influenced by the decisions of Byzantine agents. To address this challenge, each honest agent constructs estimated variables to decouple  both its local cost functions and global coupling inequality constraints from the potential influence of Byzantine agents.
\item[2)] Utilizing  the modified SVRG (mod-SVRG)  technique, a novel decentralized Byzantine-resilient online SGNE seeking algorithm (DBROSA) is proposed. This algorithm strategically combines the modified SVRG technique, dynamic average consensus, and a robust aggregation mechanism to collectively address malicious behavior, inherent stochasticity, and partial knowledge of objectives and constraints.
\item[3)] The resilient system-wise regret $\mathcal R_{\mathcal H}(T)$ and constraint violation $\mathcal{CV}_{\mathcal H}(T)$, designed to mitigate the effects of Byzantine agents' decisions, are firstly proposed as performance metrics for DBROSA and are proven to exhibit sublinear growth in expectation.
\end{itemize}

A comparison between this paper and related works involving Byzantine agents is summarized in Table \ref{tab:comparison}.

The remainder of this paper is structured as outlined below. Section \ref{sec.one} mainly focuses on the formalization of stochastic online multi-cluster games. Section \ref{sec.two} introduces the DBROSA algorithm. Section \ref{sec.three} presents the main theoretical results concerning the resilient system-wise regret $\mathcal R_{\mathcal H}(T)$ and constraint violation $\mathcal{CV}_{\mathcal H}(T)$. Section \ref{sec.four} provides numerical examples to validate the effectiveness of the proposed method. Finally, section \ref{sec.five} concludes the paper.

{\bf Notations}. Let $\mathbb{R}$, $\mathbb{R}^n$ and $\mathbb{R}^{m \times n}$ represent the collections of real numbers, $n$-dimensional real vectors and $m \times n$-dimensional real matrices, respectively. For any positive integer $N>1$, the set $[N]$ is defined as $\left\{1,\cdots,N\right\}$. For a sequence of column vectors $y_{1},\cdots,y_{N}$, $col(y_{k})_{k\in[N]}\triangleq col(y_{1},\cdots,y_{N})=\left[y_{1}^\top,\cdots,y_{N}^\top\right]^\top$. Given a vector $y \in \mathbb{R}^n$, $\left\|  y \right\|$ signifies its standard Euclidean norm, the $i$th component of $y$ is denoted by ${\left[y \right]}_{i}$, ${{P}_{\chi}}\left[y\right]$ represents the Euclidean projection of $y$ onto a closed convex set $\chi$, and $[y]_{+}\triangleq col(max\left\{0,[y]_{k}\right\})_{k \in [n]}$. For any matrix $W$, ${\left[ W \right]}_{ij}$ denotes the $(i,j)$-th entry of $W$, $[W]_{(k,:)}$ and $[W]_{(:,k)}$ denote the $k$-th row and the $k$-th column of $W$, respectively. $W^{T}$ stands for the transpose of $W$. Particularly, $I_n$ denotes the $n\times n$ identity matrix, and $\bm{1}_n$  is an $n$-dimensional column vector whose entries are all ones. The indicator function $\mathbb I(a\neq b)$ equals $1$ when $a\neq b$ and $0$ otherwise. For a vector-valued function $g(x):\mathbb R^{n}\rightarrow \mathbb R^{m}\triangleq [g_{1}(x),g_{2}(x),\cdots,g_{m}(x)]^\top$, the derivative of $g(x)$ is defined as $\nabla_{x}g(x)\triangleq [\nabla g_{1}(x),\nabla g_{2}(x),\cdots,\nabla g_{m}(x)]^\top \subseteq \mathbb R^{m\times n}$. For a set $\mathcal S$, $\left|\mathcal S\right|$ denotes the cardinality of $\mathcal S$. Given two sets $A$ and $B$, $A\setminus B$ is the set of all elements that belong to $A$ but not to $B$. Given two sequences $\{{{x}_{t}}\}_{t=1}^{\infty }$ and $\{{{y}_{t}}\}_{t=1}^{\infty }$, ${{x}_{t}}=\mathcal{O}({{y}_{t}})$ indicates the existence of $t_{0},c>0$ such that $\left| {{x}_{t}} \right| \le c \left| {{y}_{t}} \right|$ holds for $t \ge t_{0}$. Given $a,b\in\mathbb R$, $\bmod(a,b)$  represents the remainder of $a$ divided by $b$.

\section{Problem Formulation}\label{sec.one}
This section starts with the analysis of the graph structure and subsequently introduces the stochastic online multi-cluster games amidst Byzantine agents. Additionally, several metrics to assess the performance of the online SGNE seeking algorithm are proposed.

\subsection{Graph Structure}\label{sec.one.1}
The agents in the online multi-cluster game interact with each other over a directed graph $\mathcal G=\left(\left[n\right],\mathcal E\right)$, where $\left[n\right]$ represents the collection of all agents and $\mathcal E \subseteq [n]\times[n]$ represents the collection of edges. Assuming that all agents are divided into $N$ clusters, with each cluster containing at least one honest agent. The communication graph among agents in cluster $i$ is depicted by $\mathcal G_{i}=\left(\left[n_{i}\right],\mathcal E_{i}\right)$, where $\left[n_{i}\right]$ and $\mathcal E_{i}\subseteq [n_{i}]\times[n_{i}]$ denote the collection of agents and edges in cluster $i$, respectively. Notably, $n\triangleq \sum_{i=1}^{N}n_{i}$. A directed graph is strongly connected if and only if there is at least one directed path from any agent to all the other agents. Let $\mathcal H$ denote the collection of honest agents and $\mathcal B$ denote the collection of Byzantine agents in graph $\mathcal G$, thus $\mathcal H \cup \mathcal B = \left[n\right]$. For any cluster $i$, $\mathcal H_{i}$ and $\mathcal B_{i}\triangleq [n_{i}]\setminus\mathcal H_{i}$ represent the honest agents and Byzantine agents in graph $\mathcal G_{i}$, respectively. Notably, honest agents have no knowledge of the number or identities of Byzantine agents. Without loss of generality, the upper bound of $\left|\mathcal B_{i}\right|$ is set to $b_{i}$, i.e., $\left|\mathcal B_{i}\right| \leq b_{i}$, thus $\left|\mathcal B\right| \leq b$ with $b\triangleq \sum_{i=1}^{N}b_{i}$. For all $i \in [N]$, $b_{i}$ is well-known piece of information.
\begin{definition}\label{def.1}
 \rm{\cite{Su_2021} A reduced graph $\mathcal R(\mathcal H, \mathcal E^{b-})$ of graph $\mathcal G \left(\mathcal V, \mathcal E \right)$ is defined as a subgraph obtained by removing all the Byzantine agents from $\mathcal V$, and furthermore, by removing any up to $b$ incoming edges of each honest agent.}
\end{definition}
\begin{definition}\label{def.2}
\rm{\cite{Vaidya_2012_1} A source component $\mathcal S \left(\mathcal V^{*},\mathcal E^{*} \right)$ of graph $\mathcal G \left(\mathcal V,\mathcal E \right)$ is a strongly connected subgraph satisfying that each agent in $\mathcal V^{*}$ has a directed path to any other agents in $\mathcal V \backslash \mathcal V^{*}$ and is not reachable from any other agents in $\mathcal V \backslash\mathcal V^{*}$.}
\end{definition}

It is worth noting that a graph containing a source component cannot be strongly connected, since there are no directed paths from the agents outside the source component to the agents within it. To  ensure the robustness of honest agents against misleading messages propagated by Byzantine agents, the following assumption is commonly imposed.

\begin{assumption}\label{assp.2}
\rm{For any $i \in \left[N\right]$, every reduced graph $\mathcal R(\mathcal H_{i},\mathcal E_{i}^{b_{i}-})(\text{or}~\mathcal R(\mathcal H,\mathcal E^{b-}))$ of graph $\mathcal G_{i}(\text{or}~\mathcal G)$ has a nonempty source component.}
\end{assumption}

Assumption \ref{assp.2} guarantees that the honest agents, after applying the CTM mechanism, still retain the ability to propagate information effectively. It also implies that the total number of incoming neighbors for each honest agent in the graph $\mathcal G_i$ (or $\mathcal G$) is at least $2b_i + 1$ (or $2b + 1$) \cite{Su_2021}. For convenience, let $\mathcal N_{ij}^{i}\triangleq \left\{k\in[n_{i}]|(j,k)\in\mathcal E_{i}\right\}$ be the set of incoming neighbors of agent $j$ in cluster $i$ over graph $\mathcal G_{i}$ and $\mathcal N_{ij}\triangleq\left\{(p,q)\in[N]\times [n_{p}]|(\sum_{k=1}^{i-1}n_{k}+j,\sum_{k=1}^{p-1}n_{k}+q)\in\mathcal E\right\}$ be the set of incoming neighbors of agent $j$ in cluster $i$ over graph $\mathcal G$ with $\sum_{k=1}^{0}n_{k}\triangleq 0$.

\subsection{Stochastic Online  Multi-cluster Games}\label{sec.one.3}
If Byzantine agents are absent, each agent $j$ in cluster $i$ has a time-varying local  stochastic cost function $f_{j}^{i,t}\left(x_{i,t},x_{-i,t},\theta_{i,t}\right):\mathbb R^{D+nd}\rightarrow\mathbb R$, where $x_{i,t}=col(x_{ij,t})_{j\in[n_{i}]}\in \Omega_{i}\triangleq \Omega_{i1}\times \cdots \times\Omega_{in_{i}} \subseteq \mathbb R^{n_{i}d}$ is the decision variable of cluster $i$ with $x_{ij,t}$ and $\Omega_{ij}\subseteq \mathbb R^{d}$ denoting the decision variable and the constraint set of agent $j$ in cluster $i$, respectively, $x_{-i,t}=col(x_{1,t},\cdots,x_{i-1,t},x_{i+1,t},\cdots,x_{N,t})$ denotes all the decision variables except for cluster $i$, $\theta_{i,t} \in \Theta_i\subseteq\mathbb R^{D}$ is a random parameter following a time-varying probability distribution $\mathcal P_{i,t}$, and $t \in [T]$ with $T$ denoting a time horizon. The agents in the same cluster $i$ aim to optimize the sum of the expectation of time-varying local stochastic cost functions as 
\begin{equation}\label{eq.problem1}
\frac{1}{n_{i}}\sum_{j =1}^{n_{i}}\mathbf{E}_{\theta_{i,t}}\left[f_{j}^{i,t}\left(x_{i,t},x_{-i,t},\theta_{i,t}\right)\right].
\end{equation}

The cost function of cluster $i$ is coupled with other clusters' decisions. However, not only the cost functions are coupled among different clusters, the stochastic constraint functions $g_{t}:\mathbb R^{D+nd}\rightarrow \mathbb R^{m}$ can also be coupled. Thus, the global coupled inequality constraint function is defined as 
\begin{equation}\label{eq.problem2}
\mathbf{E}_{\omega_{t}}\left[g_{t}\left(x_{t},\omega_{t}\right)\right]\leq \bm 0_{m},
\end{equation}
where $x_{t}=col(x_{1,t},\cdots,x_{N,t})\in \mathbb R^{nd}$ and $\omega_{t}\in \Lambda\subseteq\mathbb R^{D}$ follows a time-varying probability distribution $\mathcal Q_{t}$. Specifically, the cost function $f^{i,t}_{j}$ and constraint function $g_{t}$  are sequentially disclosed to agent $j$ in cluster $i$ only after the decision variable $x_{ij,t} \in \mathbb R^{d}$ has been selected for $\forall t\in[T]$.

If Byzantine agents are present, the malicious messages they send may hinder the optimization process of minimizing the cost function (\ref{eq.problem1}) under constraint (\ref{eq.problem2}) for each cluster $i$, as any Byzantine agent $j \in \mathcal{B}_i$ can arbitrarily manipulate its local cost function \(f_j^{i,t}\). Let $\Omega\triangleq\Omega_{1}\times \cdots \times \Omega_{N}$. In this case, a \textit{stochastic online multi-cluster game} is formulated as follows.
\begin{equation}\label{eq.BROMG}
\forall i \in [N]: \quad
\left\{
\begin{aligned}
\min_{x_t \in \Omega} \quad & F_{i,t}(x_t), \\
\text{s.t.} \quad & G_t(x_t) \leq \bm{0}_m,
\end{aligned}
\right.
\end{equation}
where the cost function is defined as
\begin{equation}\label{eq.costfunction}
F_{i,t}(x_t) \triangleq \mathbf{E}_{\theta_{i,t}} \left[ \widetilde f_{i,t}(x_t, \theta_{i,t}) \right],
\end{equation}
with
\begin{equation}
\widetilde f_{i,t}(x_t, \theta_{i,t}) \triangleq \frac{1}{|\mathcal{H}_i|} \sum_{j \in \mathcal{H}_i} f_j^{i,t}(x_{i,t}, x_{-i,t}, \theta_{i,t}),
\end{equation}
and the constraint function is given by
\begin{equation}
G_t(x_t) \triangleq \mathbf{E}_{\omega_t} \left[ g_t(x_t, \omega_t) \right].
\end{equation}

Furthermore, the feasible set of problem (\ref{eq.BROMG}) is $\Xi_{t}=\left\{x_{t} \in \Omega \lvert G_{t}\left(x_{t}\right)\leq \bm 0_{m}\right\}$. Let $x^{*}_{i,t}=col(x^{*}_{i1,t},\cdots,x^{*}_{in_{i},t})$ and $x^{*}_{-i,t}=col(x^{*}_{1,t},\cdots,x^{*}_{i-1,t},x^{*}_{i+1,t},\cdots,x^{*}_{N,t})$. The primary goal of this paper is to seek the SGNE of problem (\ref{eq.BROMG}), which is formally defined as follows.
\begin{definition}
\rm
A SGNE at time $t$ is a collective strategy $x_{t}^{*} = \left(x_{i,t}^{*}, x_{-i,t}^{*}\right)$ satisfying that, for all $i \in [N]$,
\begin{equation*}
F_{i,t}\left(x_{i,t}^{*}, x_{-i,t}^{*}\right) \leq \inf_{y \in \Xi_{i,t}(x_{-i,t}^{*})} F_{i,t}\left(y, x_{-i,t}^{*}\right),
\end{equation*}
with
\begin{equation*}
\Xi_{i,t}(x_{-i,t}^{*}) = \left\{x_{i,t} \in \Omega_{i} \mid G_{t}\left(x_{i,t}, x_{-i,t}^{*}\right) \leq \bm{0}_{m} \right\}.
\end{equation*}
\end{definition}

To guarantee the existence of SGNE, the following assumptions are commonly imposed \cite{Cui_2021},\cite{Franci_2022}.

\begin{assumption}\label{assp.4}
\rm{For each $i \in [N]$, $j \in [n_i]$, the set $\Omega_{ij}$ is nonempty, compact and convex, with all elements bounded by a radius $R>0$. Additionally, the feasible set $\Xi_{t}$ satisfies Slater's constraint qualification for any $t \in \left[T\right]$.}
\end{assumption}

\begin{assumption}\label{assp.5}
\rm{For each $i \in [N]$, $j \in \mathcal H_i$, and $t \in [T]$, given any $x_{-i,t} \in \mathbb{R}^{(n-n_i)d}$ and $\omega_t \in \Lambda$, the following conditions hold:
\begin{itemize}
    \item[(i)] The local stochastic cost function $f_{j}^{i,t}(x_{i,t}, x_{-i,t}, \theta_{i,t})$ is convex, Lipschitz continuous, and continuously differentiable with respect to $x_{i,t}$ for each $\theta_{i,t}\in\Theta_i$, where the Lipschitz constant $\ell^{i,t}_{j}(x_{-i,t}, \theta_{i,t})$ is integrable with respect to $\theta_{i,t}$. Moreover, $f_{j}^{i,t}(x_{i,t}, x_{-i,t}, \theta_{i,t})$ is measurable with respect to $\theta_{i,t}$ for each $x_{i,t} \in \mathbb{R}^{n_id}$.

    \item[(ii)] The stochastic constraint function $g_t(x_t, \omega_t)$ is convex and continuously differentiable with respect to $x_{i,t}$.

    \item[(iii)] The functions $F_{i,t}(x_{i,t}, x_{-i,t})$ and $G_t(x_t)$ are continuously differentiable with respect to $x_{i,t}$.
\end{itemize}}
\end{assumption}

Under Assumptions \ref{assp.4}-\ref{assp.5}, the existence of a SGNE for the game \eqref{eq.BROMG} is guaranteed \cite{Ravat_2011}. Among all the possible stochastic generalized Nash equilibria,  we focus on the class of variational stochastic generalized Nash equilibrium (v-SGNE), which constitutes a solution to the suitable stochastic variational inequality (SVI) as follows.
\begin{align*}
\mathbb F_{t}(x^{*}_{t})^{\top}(x_{t}-x^{*}_{t})\geq 0,~~\text{for all}~ x_t\in\Xi_{t},
\end{align*}
where $\mathbb F_{t}(x_{t})\triangleq col\left(\nabla_{x_{ij,t}}F_{i,t}(x_{t})\right)_{i \in [N],j\in[n_{i}]}$ is called the pseudogradient mapping. A standard assumption on the pseudogradient mappings is shown in Assumption \ref{assp.8}.
\begin{assumption}\label{assp.8}
\rm{The pseudogradient mapping $\mathbb F_{t}(x_{t})$ is $\sigma$-strongly monotone with $\sigma>0$ for all $t\in[T]$.}
\end{assumption}

\begin{remark}
\rm{Under Assumption \ref{assp.8}, the SVI admits a unique solution\cite{Ravat_2011}. Consequently, there exists a unique  v-SGNE and the Lagrangian multipliers $\lambda^{*}_{i,t}$ related to the v-SGNE are identical for all $i\in[N]$. Under Assumptions \ref{assp.4}-\ref{assp.8}, the following optimal conditions are satisfied for v-SGNE \cite{Facchinei_2003}, i.e., for any $t\in[T]$,
\begin{equation}\label{re_3}
x_{ij,t}^{*}=P_{\Omega_{ij}}\left[x_{ij,t}^{*}-\gamma_{t}\left(\nabla_{x_{ij,t}}F_{i,t}(x^{*}_{t})+\nabla_{x_{ij,t}}G_{t}(x^{*}_{t})^\top \lambda^{*}_{t}\right)\right],
\end{equation}
\begin{equation}\label{re_2}
\lambda_{t}^{*}=[\lambda_{t}^{*}+G_{t}(x^{*}_{t})]_{+},
\end{equation}}
\end{remark}
where (\ref{re_2}) is equivalent to $\lambda_{t}^{*\top} G_{t}(x^{*}_{t})=0$.

Since random variables exist in the arguments of local cost functions and constraint functions, which follow time-varying probability distributions, the assumptions listed below should be satisfied in order to provide effective theoretical tools for convergence analysis.

\begin{assumption}\label{assp.111} 
\rm{For any bounded closed set $X$, there exist some constants $B_1,B_2 ,B_3,B_4 > 0$ such that for all $i \in [N]$, $j\in\mathcal H_i$, $q \in [n_i]$, $t \in [T]$, and $x_t \in X$, it holds that
\begin{equation*}
\mathbf{E}_{\omega_t} \big\| g_t(x_t, \omega_t) \big\|^2 \leq B_1^2,~\mathbf{E}_{\omega_t} \big\| \nabla_{x_{ij,t}} g_t(x_t, \omega_t) \big\|^2 \leq B_2^2
\end{equation*}
\begin{equation*}
\mathbf{E}_{\theta_{i,t}} \big\| f_j^{i,t}(x_t, \theta_{i,t}) \big\|^2 \leq B_3^2,~\mathbf{E}_{\theta_{i,t}} \big\| \nabla_{x_{iq,t}} f_j^{i,t}(x_t, \theta_{i,t}) \big\|^2 \leq B_4^2.
\end{equation*}}
\end{assumption}

\begin{assumption}\label{assp.7}
\rm{For any $x_{t}=\left(x_{i,t},x_{-i,t}\right)\in \mathbb R^{nd}$ and $y_{t}=\left(y_{i,t},y_{-i,t}\right)\in \mathbb R^{nd}$, there exist some constants $l_{f}>0$, $l_{g_{1}}>0$ and $l_{g_{2}}>0$ such that
\begin{align*}
&\mathbf E_{\theta_{i,t}}\left\|\nabla f_{j}^{i,t}\left(x_{t},\theta_{i,t}\right)-\nabla f_{j}^{i,t}\left(y_{t},\theta_{i,t}\right)\right\|^{2}\leq l_{f}^{2} \left\|x_{t}-y_{t}\right\|^{2},
\\&\mathbf E_{\omega_{t}}\left\|g_{t}\left(x_{t},\omega_{t}\right)-g_{t}\left(y_{t},\omega_{t}\right)\right\|^{2}\leq l_{g_{1}}^{2} \left\|x_{t}-y_{t}\right\|^{2},
\\&\mathbf E_{\omega_{t}}\left\|\nabla g_{t}\left(x_{t},\omega_{t}\right)-\nabla g_{t}\left(y_{t},\omega_{t}\right)\right\|^{2}\leq l_{g_{2}}^{2} \left\|x_{t}-y_{t}\right\|^{2}.
\end{align*}
}
\end{assumption}

\begin{assumption}\label{assp.11}
\rm{The random variables $\theta_{i,t}$ and $\omega_{t}$ are mutually independent for all $i\in[N]$ and $t\in[T]$. Furthermore, the gradients of the local stochastic cost functions satisfy the unbiasedness property, i.e., for any $i\in[N]$, $j\in\mathcal H_i$, $q\in[n_i]$, and $x\in\mathbb R^{nd}$,
\begin{equation*}
\mathbf{E}_{\theta_{i,t}}\left[\nabla_{x_{iq,t}} f_j^{i,t}(x, \theta_{i,t}) \right]=\nabla_{x_{iq,t}}\mathbf{E}_{\theta_{i,t}}\left[f_j^{i,t}(x, \theta_{i,t})\right].
\end{equation*}
\begin{equation*}
\mathbf{E}_{\omega_{t}}\left[\nabla_{x_{iq,t}} g_t(x, \omega_{t}) \right]=\nabla_{x_{iq,t}} \mathbf{E}_{\omega_{t}}\left[g_t(x, \omega_{t}) \right].
\end{equation*}}
\end{assumption}

\subsection{Performance Metrics}
For the purpose of evaluating the performance of the online algorithm, the resilient system-wise regret and constraint violation are defined in the sense of expectation as follows.
\begin{equation}\label{eq.regret}
\mathcal R_{\mathcal H}(T)=\sum_{t=1}^{T}\sum_{i=1}^{N}\sum_{j\in\mathcal H_{i}}\left(F_{i,t}\left(x_{ij,t},x_{-ij,t}^{*}\right)-F_{i,t}\left(x_{t}^{*}\right)\right),
\end{equation}
\begin{equation}\label{eq.cv}
\mathcal{CV}_{\mathcal H}(T)=\sum_{t=1}^{T}\left\|\left[G_{t}\left(x^{*}_{\mathcal H,t}\right)\right]_{+}\right\|,
\end{equation}
where $x_{-ij,t}^{*}$ denotes the SGNE $x^{*}_{t}$ except for $x^{*}_{ij,t}$, i.e.,  $x_{-ij,t}^{*}=col(x^{*}_{11,t},\cdots,x^{*}_{i(j-1),t},x^{*}_{i(j+1),t},\cdots,x^{*}_{Nn_{N},t})$ and $x^{*}_{\mathcal H,t}$ is derived from $x^{*}_{t}$ in which the optimal solution $x_{ij,t}^{*}$ is replaced by $x_{ij,t}$ for any $j\in\mathcal H_{i}$.

\begin{remark}
\rm{The above definitions are totally different from previous works \cite{Meng_2021,Sahoo_2021,Meng_2022,Yu_2023}. In this paper, there are some random variables in the arguments of the cost functions $f^{i,t}_{j}$ and constraint functions $g_{t}$. Meanwhile, Byzantine attacks are considered. Therefore, $\mathcal R_{\mathcal H}(T)$ and $\mathcal{ CV}_{\mathcal H}(T)$ are defined in the sense of expectation and only measure the impact of the decision variables of honest agents on the cost function and constraint function values. Specifically, the definition of $\mathcal R_{\mathcal H}(T)$ considers the performance of all agents from a global perspective, while the definition of $\mathcal{CV}_{\mathcal H}(T)$ only considers the degree of deviation of each honest agent from global constraints, given that other agents have achieved SGNE due to the existence of Byzantine agents. Critically, the SGNE serves as the theoretical benchmark representing the optimal solution of game (\ref{eq.BROMG}) achievable in the absence of Byzantine agents. Measuring (\ref{eq.regret}) and (\ref{eq.cv}) relative to this benchmark quantifies the performance degradation inflicted by malicious attacks on the honest agents.}

The goal is to design the algorithm that enables $\mathcal R_{\mathcal H}(T)$ and $\mathcal{CV}_{\mathcal H}(T)$ to achieve sublinear growth in expectation, i.e., there exist some constants $c_{1},c_{2}\in [0,1)$, such that $\mathbf{E}\left[\mathcal R_{\mathcal H}(T)\right]=\mathcal O(T^{c_{1}})$ and $\mathbf{E}\left[\mathcal{CV}_{\mathcal H}(T)\right]=\mathcal O(T^{c_{2}})$.
\end{remark}
\section{Algorithm Development}\label{sec.two}
This section introduces the mod-SVRG technique to reduce the variances of randomness and endows each honest agent with auxiliary variables to estimate other agents' decisions. Additionally, a robust aggregation mechanism to mitigate the impact of malicious attacks on honest agents is introduced. Subsequently, a dynamic average consensus algorithm is constructed to track global information and an online SGNE seeking algorithm is designed ultimately.

A regularized time-varying stochastic Lagrangian function related to (\ref{eq.BROMG}) for cluster $i$ is written as
\begin{equation*}
\hat {\mathcal L}_{i,t}(x_{t},\lambda_{t},\theta_{i,t},\omega_{t})=\widetilde f_{i,t}\left(x_{t},\theta_{i,t}\right)+\lambda_{t} g_{t}\left(x_{t},\omega_{t}\right)-\frac{\beta_{t}}{2}\left\|\lambda_{t}\right\|^{2},
\end{equation*}
where $\lambda_{t}\in \mathbb R^{m}_{\geq 0}$ is the dual variable and $\beta_{t}$ is the non-negative regularization parameter.

For ease of notations, denote $\nabla_{x_{ij,t}}\hat {\mathcal L}_{i,t}(x_{t},\lambda_{t},\theta_{i,t},\omega_{t})$ and $\nabla_{\lambda_{t}}\hat {\mathcal L}_{i,t}(x_{t},\lambda_{t},\theta_{i,t},\omega_{t})$ as $M_{ij,t}$ and $N_{t}$, repectively. Then $M_{ij,t}$ and $N_{t}$ can be represented as
\begin{equation}\label{eq.gradient}
\begin{aligned}
&M_{ij,t}=\nabla_{x_{ij,t}}\widetilde f_{i,t}\left(x_{t},\theta_{i,t}\right)+\lambda_{t}\nabla_{x_{ij,t}}g_{t}\left(x_{t},\omega_{t}\right),
\\&N_{t}=g_{t}\left(x_{t},\omega_{t}\right)-\beta_{t}\lambda_{t}.
\end{aligned}
\end{equation}

Intuitively speaking, $M_{ij,t}$ and $N_{t}$ are the gradient descent direction for the decision variable $x_{ij,t}$ and the gradient ascent direction for the dual variable $\lambda_{t}$, respectively. It is highly likely that each agent cannot distinguish the malicious messages from the stochastic gradients \cite{Wu_2020}. To be specific, if agent $j$ in cluster $i$ randomly selects a sample $\theta_{i,t}\sim\mathcal P_{i,t}$ such that $\nabla_{x_{ij,t}}f_{j}^{i,t}\left(x_{t},\theta_{i,t}\right)$ deviates from the mean, then this agent is likely to be  misidentified as a Byzantine agent and its information may be filtered out. To avoid this phenomenon, the mod-SVRG technique is proposed to reduce such variances.

\subsection{Variance Reduction Technique}
Before going on, one can see that $M_{ij,t}$ and $N_{t}$ are the unbiased estimators of $\nabla_{x_{ij,t}}F_{i,t}(x_{t})+\lambda_{t}\nabla_{x_{ij,t}}G_{t}\left(x_{t}\right)$ and $G_{t}\left(x_{t}\right)-\beta_{t}\lambda_{t}$ conditioned on $\mathcal U_{t}$, respectively, where $\mathcal U_{t}$ is the $\sigma$-field  generated by the random variables $\theta_{1,t},\cdots,\theta_{N,t},\omega_{t}$. However, both $M_{ij,t}$ and $N_{t}$ can be affected by randomness, resulting in a likely large variance. Therefore, the mod-SVRG technique (see Appendix \ref{mod-SVRG}) is employed to reduce the variance and consequently construct the gradient descent direction $\widetilde M_{ij,t}$ for $x_{ij,t}$ and the gradient ascent direction  $\widetilde N_{t}$ for $\lambda_{t}$, as detailed below.
\begin{equation}\label{direction}
\begin{aligned}
&\widetilde M_{ij,t}=d_{ij,t}^{1}+\lambda_{t}d_{ij,t}^{3},
&\widetilde N_{t}=d_{t}^{2}-\beta_{t}\lambda_{t},
\end{aligned}
\end{equation}
where $d_{ij,t}^{1}$ and $d_{t}^{2}$ satisfy
\begin{equation}\label{firstpart}
\begin{aligned}
&d_{ij,t}^{1}=\nabla_{x_{ij,t}}\widetilde f_{i,t}\left(x_{t},\theta_{i,t}\right)
-\nabla_{x_{ij,t}}\widetilde f_{i,t}\left(\tau_{t},\theta_{i,t}\right)
\\&\quad\quad\quad+\nabla_{x_{ij,t}}F_{i,t}\left(\tau_{t}\right),
\\&d_{t}^{2}=g_{t}\left(x_{t},\omega_{t}\right)-g_{t}\left(\tau_{t},\omega_{t}\right)+G_{t}(\tau_{t}),
\end{aligned}
\end{equation}
and $d_{ij,t}^{3}$ is the gradient of $d_{t}^{2}$ at $x_{ij,t}$, meaning that
\begin{equation}\label{secondpart}
\begin{aligned}
d_{ij,t}^{3}=\nabla_{x_{ij,t}}g_{t}\left(x_{t},\omega_{t}\right)-\nabla_{x_{ij,t}}g_{t}\left(\tau_{t},\omega_{t}\right)+\nabla_{x_{ij,t}}G_{t}(\tau_{t}).
\end{aligned}
\end{equation}

Therein, given a time-varying positive integer $s(t)$ satisfying $\lim\limits_{t\to\infty}s(t)=1$, $\tau_{t}$ is set as $\tau_{t}=x_{t}$ if $\bmod(t,s(t))=0$ and $\tau_{t}=\tau_{t-1}$ otherwise. Apparently, $d_{ij,t}^{1}$, $d_{t}^{2}$, and $d_{ij,t}^{3}$ serve as unbiased estimators for $\nabla_{x_{ij,t}}F_{i,t}(x_{t})$, $G_{t}(x_{t})$, and $\nabla_{x_{ij,t}}G_{t}(x_{t})$, respectively. Thus, $\widetilde M_{ij,t}$ and $\widetilde N_{t}$ are also unbiased estimators of $\nabla_{x_{ij,t}}F_{i,t}(x_{t})+\lambda_{t}\nabla_{x_{ij,t}}G_{t}\left(x_{t}\right)$ and $G_{t}\left(x_{t}\right)-\beta_{t}\lambda_{t}$ conditioned on $\mathcal U_{t}$, respectively. Furthermore, the variances of $d_{ij,t}^{1}$, $d_{t}^{2}$ and $d_{ij,t}^{3}$ will converge to $0$.  Below is a proof taking $d_{ij,t}^{1}$ as an example. By Assumptions \ref{assp.7} and \ref{assp.11}, and the fact that $\mathbf E\left\|x-\mathbf Ex\right\|^{2}=\mathbf E\left\|x\right\|^{2}-\left\|\mathbf E x\right\|^{2}$, one has
\begin{align*}
&\lim\limits_{t\to\infty}\mathbf E_{\mathcal U_{t}}\left\|d_{ij,t}^{1}-\nabla_{x_{ij,t}}F_{i,t}(x_{t})\right\|^{2}
\\&=\lim\limits_{t\to\infty}\mathbf E_{\mathcal U_{t}}\left\|\nabla_{x_{ij,t}}\widetilde f_{i,t}\left(x_{t},\theta_{i,t}\right)-\nabla_{x_{ij,t}}\widetilde f_{i,t}\left(\tau_{t},\theta_{i,t}\right)\right.
\\&\quad\quad\quad\quad\quad\left.+\nabla_{x_{ij,t}}F_{i,t}\left(\tau_{t}\right)-\nabla_{x_{ij,t}}F_{i,t}(x_{t})\right\|^{2}
\\&\leq l_{f}^{2}\lim\limits_{t\to\infty}\left\|x_{t}-\tau_{t}\right\|^{2},
\end{align*}
where $\lim\limits_{t\to\infty}\mathbf E_{\mathcal U_{t}}\left\|d_{ij,t}^{1}-\nabla_{x_{ij,t}}F_{i,t}(x_{t})\right\|^{2}\geq 0$ holds naturally due to the non-negativity of norms, and $\lim\limits_{t\to\infty}\left\|x_{t}-\tau_{t}\right\|^{2}=0$ holds given the condition that $\lim\limits_{t\to\infty}s(t)=1$. Therefore, $\lim\limits_{t\to\infty}\mathbf E_{\mathcal U_{t}}\left\|d_{ij,t}^{1}-\nabla_{x_{ij,t}}F_{i,t}(x_{t})\right\|^{2}=0$ by using the Sandwich Theorem. The proofs related to $d_{t}^{2}$ and $d_{ij,t}^{3}$ are similar to that of $d_{ij,t}^{1}$.

\begin{remark}
\rm{The design inspiration of mod-SVRG is derived from the SVRG technique, though SVRG is specifically designed for solving finite-sum optimization problems. Nevertheless, the problem \(\underset{x}{\rm{min}} \frac{1}{n} \sum_{i=1}^{n} f_i(x)\) can be regarded as a special case of the general stochastic optimization problem \(\underset{x}{\rm{min}}\, \mathbb{E}_{\xi}[f(x, \xi)]\), where the random variable \(\xi\) admits finite samples, and the sample size \(n\) may be large or even infinite. The mod-SVRG technique is firstly proposed for application in online settings and the general stochastic optimization problems. According to the analysis in Appendix \ref{mod-SVRG}, the mod-SVRG technique and the SVRG technique share the same geometric convergence rate in expectation. For ease of analysis, it is assumed that the gradients $\nabla_{x_{ij,t}}F_{i,t}\left(\tau_{t}\right)$ and $\nabla_{x_{ij,t}}G_{t}\left(\tau_{t}\right)$ and the constraint function $G_{t}\left(\tau_{t}\right)$ can be computed exactly.}
\end{remark}

Furthermore, the functions $f_{j}^{i,t}\left(x_{t},\theta_{i,t}\right)$ and $g_{t}\left(x_{t},\omega_{t}\right)$ also depend on the decision variables of other agents for each agent $j$ in cluster $i$. If one of the agents is malicious, then the game progress can be arbitrarily controlled. In what follows, each agent is equipped with an auxiliary variable to estimate other agents' decisions.

\subsection{Decoupling Decision Variables}
For each agent $j \in \mathcal H_{i}$, $x_{t}=(x_{i,t},x_{-i,t})$ in the arguments of functions $f^{i,t}_{j}(x_{t},\theta_{i,t})$ and $g_{t}(x_{t},\omega_{t})$ may include the decisions of Byzantine agents, which is the main difference between the Byzantine games and Byzantine optimization. To overcome the above dilemma and the lack of knowledge of other agents' decisions in decentralized settings, each honest agent maintains an estimate of other agents' decisions. 

Specifically, each agent $j \in \mathcal H_{i}$ keeps a vector $\bm x_{ij,t}=col(\bm x_{ij,t}^{1},\cdots,\bm x_{ij,t}^{N})\triangleq col(\bm x_{ij,t}^{p})_{p\in[N]} \in \mathbb R^{nd}$, where $\bm x_{ij,t}^{p} \in \mathbb R^{n_{p}d}$ represents the honest agent $j$ in cluster $i$'s estimation of the agents in cluster $p$ at time $t$. Furthermore, $\bm x_{ij,t}^{p}=col(\bm x_{ij,t}^{p1},\cdots,\bm x_{ij,t}^{pn_{p}})\triangleq col(\bm x_{ij,t}^{pq})_{q\in[n_{p}]}$ with $\bm x_{ij,t}^{pq}\in\mathbb R^{d}$ representing the honest agent $j$ in cluster $i$'s estimation of agent $q$ in cluster $p$ at time $t$. Clearly, $\bm x_{ij,t}^{ij}=x_{ij,t}$. An effective estimate is that $\bm x_{ij,t}$ of all honest agents reach consensus as time $t$ approaches infinity. Based on these advantages, $x_{t}$ is substituted by $\bm x_{ij,t}$ to ensure that the functions only depend on its own estimated variables.

Each honest agent must interact with its neighbors to achieve an agreement on $\bm x_{ij,t}$ as much as possible. However, Byzantine agents may send misleading information to their neighbors. Let $\widetilde{\bm x}_{rh,t}^{pq}$ denote the message that agent $h$ in cluster $r$ sends to its neighbors at time $t$. Precisely,
\begin{equation}\label{send value_three}
\widetilde{\bm x}_{rh,t}^{pq}=\left\{\begin{aligned}&\bm x_{rh,t}^{pq},~~~&h\in\mathcal H_{r},\\&*,~~&h\in\mathcal B_{r}.\end{aligned}\right.
\end{equation}
where $*$ represents an arbitrary $d$-dimensional vector.

To address this issue, one of the robust aggregation mechanisms \rm{is adopted to aggregate the received messages, called the} \it{coordinate-wise trimmed mean (CTM)} \rm{method, which is robust to Byzantine attacks \cite{Wang_2022}. To be more specific, each agent $j\in\mathcal H_{i}$ sorts the received values $[\widetilde{\bm x}_{rh,t}^{pq}]_{k}$ from incoming neighbor $h$ in cluster $r$, then it rejects the top $b$ and the bottom $b$ values for $\forall k \in [d]$ and iteratively updates using the average of the remaining input values. That is, the outliers are discarded iteratively. The above filtering step is summarized as an algorithm function $Trim\left\{\cdot\right\}$ for ease of exposition.

\subsection{Dynamic Average Consensus}
It can be observed that $d_{ij,t}^{1}$ is composed of the cost functions of all honest agents in cluster $i$. However, each agent $j\in\mathcal H_{i}$ only has access to its own cost functions $f_{j}^{i,t}$ instead of the cost functions of all honest agents in cluster $i$. Thus, $v_{ij,t}^{j}$ is constructed by $j \in \mathcal H_{i}$ to estimate the global direction $d_{ij,t}^{1}$ and follows the dynamic average consensus step inspired by \cite{Zhu_2010}, i.e.,
\begin{equation}\label{update_presence_1}
v_{iq,t}^{j}=Trim\left\{(\widetilde {v}^{h}_{iq,t-1})_{h \in \mathcal N^{i}_{ij}}\right\}+\epsilon_{iq,t}^{j}-\epsilon_{iq,t-1}^{j},\forall q\in\mathcal H_{i},
\end{equation}
where $\epsilon_{iq,t}^{j}=\nabla_{x_{iq,t}}f_{j}^{i,t}\left(\bm x_{ij,t},\theta_{i,t}\right)-\nabla_{x_{iq,t}}f_{j}^{i,t}\left(\bm\tau_{ij,t},\theta_{i,t}\right)+\nabla_{x_{iq,t}}\mathbf E_{\theta_{i,t}}\left[f_{j}^{i,t}\left(\bm\tau_{ij,t},\theta_{i,t}\right)\right]$, $\bm\tau_{ij,t}=\bm x_{ij,t}$ if $\bmod(t,s(t))=0$ and $\bm\tau_{ij,t}=\bm \tau_{ij,t-1}$ otherwise, and $\widetilde v_{iq,t}^{h}$ represents the message that agent $h\in[n_{i}]$ sends to its neighbors in cluster $i$ at time $t$, given by
\begin{equation}\label{send value_one}
\widetilde v_{iq,t}^{h}=\left\{\begin{aligned}&v^{h}_{iq,t},~~~&h \in\mathcal H_{i},\\&*,~~&h \in \mathcal B_{i}.\end{aligned}\right.
\end{equation}

In summary, the decentralized Byzantine-resilient online SGNE seeking algorithm is designed as Algorithm 1.

\begin{algorithm}\label{Algorithm}
\caption{Decentralized Byzantine-resilient Online SGNE Seeking Algorithm (DBROSA) from the view point of $j\in\mathcal H_{i}$}
\textbf{Input:} The step sizes $\left\{\alpha_{t}\right\}_{t=1}^{T}$, $\left\{\beta_{t}\right\}_{t=1}^{T}$, $\left\{\gamma_{t}\right\}_{t=1}^{T}$ and $\left\{\eta_{t}\right\}_{t=1}^{T}$, the tunable parameters $\left\{\delta_{t}\right\}_{t=1}^{T}$ and $\left\{\zeta_{t}\right\}_{t=1}^{T}$, the upper bound $b$ and $b_{i}$, and the time-varying positive integer $s(t)$.

\textbf{Initialize:} $x_{ij,1}\in \Omega_{ij}$, $G_{1}(x^{*}_{\mathcal H,1})\leq \bm 0_{m}$, $\bm x_{ij,0}=\bm x_{ij,1}\in \mathbb R^{nd}$, $\lambda_{ij,1}=\bm 0_{m}$, $\theta_{i,1}\sim \mathcal P_{i,1}$, $\omega_{1}\sim \mathcal Q_{1}$. For any $q\in\mathcal H_{i}$, $v_{iq,1}^{j}=\epsilon_{iq,1}^{j}=\bm 0_{d}$.

\textbf{for} $t=1,2,\cdots,T$ \textbf{do}
\begin{enumerate}
\item[] \textbf{Local filtering step:} 
\item[]\quad For $\forall k \in [d]$, each honest agent $j$ in cluster $i$ sorts the values $[\widetilde v^{h}_{iq,t}]_{k}$ received from its incoming neighbors $h$ in cluster $i$ over graph $\mathcal G_{i}$. Furthermore, denote $\mathcal U_{ij,t}^{k}$ as the set of agents that sent the top $b_i$ values and denote $\mathcal L_{ij,t}^{k}$ as the set of agents that sent the bottom $b_i$ values.  Let $\mathcal R_{ij,t}^{k}=(\mathcal N_{ij}^{i}\setminus (\mathcal U_{ij,t}^{k} \cup \mathcal L_{ij,t}^{k}))\cup\left\{j\right\}$.
\item[] \textbf{Global filtering step:} 
\item[]\quad For $\forall k \in [d]$, each honest agent $j$ in cluster $i$ sorts the values $[\widetilde{\bm x}_{rh,t}^{pq}]_{k}$ received from its neighbors $h$ in cluster $r$ over graph $\mathcal G$. Furthermore, denote $\mathcal U_{ij,t}^{pqk}$ as the set of agents that sent the top $b$ values and denote $\mathcal L_{ij,t}^{pqk}$ as the set of agents that sent the bottom $b$ values.  Let $\mathcal Y_{ij,t}^{pqk}=(\mathcal N_{ij}\setminus (\mathcal U_{ij,t}^{pqk} \cup \mathcal L_{ij,t}^{pqk}))\cup\left\{(i,j)\right\}$.
\item[] \textbf{Updating step:} 
\item[]\quad Each honest agent $j$ in cluster $i$ receives the information of $f^{i,t}_{j}$, $g_{t}$, $\mathcal P_{i,t}$, and $\mathcal Q_{t}$.
\item[]\quad Each honest agent $j$ in cluster $i$ selects $\theta_{i,t}\sim \mathcal P_{i,t}$, $\omega_{t}\sim \mathcal Q_{t}$, and computes $\bm \tau_{ij,t}$ following the rules that $\bm\tau_{ij,t}=\bm x_{ij,t}$ if $mod(t,s(t))=0$ and $\bm\tau_{ij,t}=\bm \tau_{ij,t-1}$ otherwise. 
\item[]\quad The update rules are shown in (\ref{eq.1})-(\ref{eq.5}).
\end{enumerate}
\textbf{end for}
\end{algorithm}

To solve (\ref{eq.BROMG}), at time $t$, every honest agent $j$ in cluster $i$ updates its decision variable $x_{ij,t}$ and dual variable $\lambda_{ij,t}$ as follows.
\begin{equation}\label{eq.1}
y_{ij,t}=g_{t}\left(\bm x_{ij,t},\omega_{t}\right)-g_{t}\left(\bm \tau_{ij,t},\omega_{t}\right)+G_{t}(\bm\tau_{ij,t}),
\end{equation}
\begin{equation} \label{eq.2}
\lambda_{ij,t+1}=\left[\lambda_{ij,t}+\eta_{t}(y_{ij,t}-\beta_{t}\lambda_{ij,t})\right]_{+},
\end{equation}
\begin{equation}\label{eq.3}
\begin{aligned}
x_{ij,t+1}=&(1-\alpha_{t})x_{ij,t}
\\&+\alpha_{t}P_{\Omega_{ij}}\bigg[x_{ij,t}-\gamma_{t}v_{ij,t}^{j}-\gamma_{t}(\nabla_{x_{ij,t}}y_{ij,t})^\top\lambda_{ij,t+1}\bigg],
\end{aligned}
\end{equation}
\begin{equation}\label{eq.4}
\begin{aligned}
&[\bm x_{ij,t+1}^{pq}]_{k}=\mathbb{I}\left((i,j)\neq (p,q)\right)\bigg(\delta_{t}\frac{\sum_{(r,h) \in \mathcal Y_{ij,t}^{pqk}}[\widetilde{\bm x}_{rh,t}^{pq}]_{k}}{|\mathcal N_{ij}|-2b+1}
\\&\quad\quad\quad\quad\quad+(\zeta_{t}-\delta_{t})[\bm x_{ij,t}^{pq}]_{k}\bigg)+\mathbb{I}\left((i,j)= (p,q)\right)[x_{ij,t+1}]_{k},
\end{aligned}
\end{equation}
\begin{equation}\label{eq.5}
[v_{iq,t+1}^{j}]_{k}=\frac{\sum_{h\in\mathcal R_{ij,t}^{k}}[\widetilde v_{iq,t}^{h}]_{k}}{|\mathcal N_{ij}^{i}|-2b_{i}+1}+[\Delta \epsilon_{iq,t+1}^{j}]_{k},\forall q\in\mathcal H_{i},
\end{equation}
where $\alpha_{t}$, $\beta_{t}$, $\gamma_{t}$, and $\eta_{t}$ are the time-varying step sizes, $\delta_{t}$ and $\zeta_{t}$ are the tunable parameters satisfying $0<\delta_{t}<\zeta_{t}<1$ and $0<|\mathcal H|\delta_{t}+\zeta_{t}<1$, and $\Delta \epsilon_{iq,t+1}^{j}=\epsilon_{iq,t+1}^{j}-\epsilon_{iq,t}^{j}$.

Observe that Algorithm 1 is fully decentralized. Malicious or misleading messages will be received by honest agents due to the unknown identity of Byzantine agents. Therefore, a robust aggregation mechanism is used in (\ref{eq.4})-(\ref{eq.5}) to filter out the collected information $\widetilde{\bm x}_{rh,t}^{pq}$ and $\widetilde v_{iq,t}^{h}$. Let $\mathscr U_{t}$ denote the $\sigma$-algebra generated by $\cup_{s=1}^{t}\mathcal U_{s}$. By Assumption \ref{assp.11}, it is evident that $y_{ij,t-1},\lambda_{ij,t},x_{ij,t},\bm x^{pq}_{ij,t}$ and $v^{j}_{iq,t-1}$ depend on $\mathscr U_{t-1}$ and are independent of $\mathcal U_{s}$ for all $s\geq t$.

\section{Main Results}\label{sec.three}
The goal of this section is to prove that the resilient system-wise regret and constraint violation constructed by (\ref{eq.regret}) and (\ref{eq.cv}) grow sublinearly.

To depict the fluctuation of the time-varying SGNE $x_{t}^{*}$, $\Phi(T)$ is defined as the summation of the differences between the decisions $x^{*}_{ij,t}$ at the current time step $t$ and those at the previous time step as follows.
\begin{equation}\label{Phi(T)}
\Phi(T)=\Phi_{\mathcal H}(T)+\Phi_{\mathcal B}(T),
\end{equation}
where 
\begin{align*}
\Phi_{\mathcal H}(T)=\sum_{t=1}^{T}\sum_{i\in[N]}\sum_{j\in\mathcal H_{i}}\sum_{k=1}^{d}\left\|[x_{ij,t+1}^{*}]_{k}-[x_{ij,t}^{*}]_{k}\right\|,
\end{align*}
and
\begin{align*}
\Phi_{\mathcal B}(T)=\sum_{t=1}^{T}\sum_{i\in[N]}\sum_{h\in\mathcal B_{i}}\sum_{k=1}^{d}\left\|[x_{ih,t+1}^{*}]_{k}-[x_{ih,t}^{*}]_{k}\right\|.
\end{align*}

\begin{lemma}\label{lemma_21}
\rm{If Assumption \ref{assp.2} holds, then for any $i,p \in [N]$ and $k \in [d]$, the update rules (\ref{eq.4})-(\ref{eq.5}) are equivalent to
\begin{equation}\label{row_form_4}
\begin{aligned}
\bm x^{pqk}_{t+1}&=\delta_{t}\bm M^{1k-}_{pq,t}\bm x^{pqk}_{t}+(\zeta_{t}-\delta_{t})\bm x^{pqk}_{t}
\\&\quad+\delta_{t}[x_{pq,t}]_{k}[\bm M^{1k-}_{pq,t}]_{(:,\mathcal S(p,q))},~~\forall q\in\mathcal H_{p},
\end{aligned}
\end{equation}
\begin{equation}
\bm x^{phk+}_{t+1}=\delta_{t}\bm M^{2k}_{ph,t}\bm x^{phk+}_{t}+(\zeta_{t}-\delta_{t})\bm x^{phk+}_{t},~~\forall h\in\mathcal B_{p},
\end{equation}
\begin{equation}\label{row_form_1}
\bm v_{iq,t+1}^{k}=\bm Y_{iq,t}^{k}\bm v_{iq,t}^{k}+\bm \epsilon_{iq,t+1}^{k}-\bm \epsilon_{iq,t}^{k},~\forall q\in\mathcal H_{i},
\end{equation}
where $\bm x^{pqk}_{t}=col([\bm x^{pq}_{ij,t}]_{k})_{i\in[N], j\in\mathcal H_{i},(i,j)\neq(p,q)}$, $\bm x^{phk+}_{t}=col([\bm x^{ph}_{ij,t}]_{k})_{i\in[N], j\in\mathcal H_{i}}$, $\bm v_{iq,t}^{k}=col([v_{iq,t}^{j}]_{k})_{j\in\mathcal H_{i}}$, $\bm \epsilon_{iq,t}^{k}=col([\epsilon_{iq,t}^{j}]_{k})_{j \in \mathcal H_{i}}$, $\bm M_{pq,t}^{1k-}$ is a time-varying row stochastic matrix $\bm M_{pq,t}^{1k}$ that remains after removing the $\mathcal S(p,q)$-th row and $\mathcal S(p,q)$-th column, $\bm M_{pq,t}^{2k}$ is a time-varying row stochastic matrix. Specifically, $\mathcal S(p,q)\triangleq\sum_{h=1}^{p-1}|\mathcal H_{h}|+q$ and $\sum_{h=1}^{0}|\mathcal H_{h}|\triangleq 0$. Furthermore, $\bm Y_{iq,t}^{k}$ is a time-varying row stochastic matrix depending on the behavior of Byzantine agents and $\bm v_{iq,t}^{k}$, and is constructed following the way in \cite{Wu_2010}.}
\end{lemma}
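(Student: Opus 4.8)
The plan is to rewrite the coordinate-wise updates (\ref{eq.4})--(\ref{eq.5}) in matrix–vector form by encoding the CTM filtering as multiplication by a suitable row-stochastic matrix, following the standard device used for trimmed-mean consensus under Byzantine faults. First I would fix a coordinate $k\in[d]$ and a target pair $(p,q)$ with $q\in\mathcal H_p$, and stack the estimates $[\bm x_{ij,t}^{pq}]_k$ over all honest indices $(i,j)$ (excluding $(p,q)$ itself, since $\bm x_{pq,t}^{pq}=x_{pq,t}$ is governed separately by (\ref{eq.3})) into the vector $\bm x_t^{pqk}$. For each honest agent $j\in\mathcal H_i$, the trimmed average $\sum_{(r,h)\in\mathcal Y_{ij,t}^{pqk}}[\widetilde{\bm x}_{rh,t}^{pq}]_k/(|\mathcal N_{ij}|-2b+1)$ has to be expressed as a convex combination of the \emph{honest} entries of $\bm x_t^{pqk}$ together with $[x_{pq,t}]_k$. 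This is where Assumption~\ref{assp.2} enters and where the core argument of \cite{Su_2021}, \cite{Vaidya_2012_1}, \cite{Wu_2010} is invoked: after discarding the top $b_i$ (resp.\ $b$) and bottom $b_i$ (resp.\ $b$) values, every surviving value received by an honest agent either comes directly from an honest neighbor, or lies (coordinate-wise) in the convex hull of values currently held by honest agents; hence it can be written as a convex combination of honest entries whose weights may depend on the (unknown) actions of Byzantine agents but are nonetheless well-defined. Collecting these weights row by row produces a row-stochastic matrix $\bm M_{pq,t}^{1k}$ acting on the full honest stack augmented by the $\mathcal S(p,q)$-th coordinate $[x_{pq,t}]_k$; deleting the $\mathcal S(p,q)$-th row and column then isolates the contribution of the other honest agents (the matrix $\bm M_{pq,t}^{1k-}$) from the contribution of agent $(p,q)$ itself (the column $[\bm M_{pq,t}^{1k-}]_{(:,\mathcal S(p,q))}$ — more precisely the deleted column re-indexed), which gives exactly (\ref{row_form_4}). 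Multiplying by $\delta_t$ and adding the $(\zeta_t-\delta_t)[\bm x_{ij,t}^{pq}]_k$ term from (\ref{eq.4}) reproduces the claimed recursion.

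For a Byzantine target $h\in\mathcal B_p$, the only difference is that no honest agent equals $(p,h)$, so the whole stack $\bm x_t^{phk+}$ is over all honest $(i,j)$ and no row/column deletion is needed; the same CTM-to-convex-combination argument yields the row-stochastic matrix $\bm M_{ph,t}^{2k}$ and the recursion $\bm x_{t+1}^{phk+}=\delta_t\bm M_{ph,t}^{2k}\bm x_t^{phk+}+(\zeta_t-\delta_t)\bm x_t^{phk+}$. For the consensus direction (\ref{eq.5}), I would stack $[v_{iq,t}^j]_k$ over $j\in\mathcal H_i$ into $\bm v_{iq,t}^k$; the local trimmed mean $\sum_{h\in\mathcal R_{ij,t}^k}[\widetilde v_{iq,t}^h]_k/(|\mathcal N_{ij}^i|-2b_i+1)$ is, by the same reasoning applied within cluster $i$ (using the per-cluster part of Assumption~\ref{assp.2}), a convex combination of the honest entries $[v_{iq,t'}^{j'}]_k$, $j'\in\mathcal H_i$, with weights assembled into the row-stochastic matrix $\bm Y_{iq,t}^k$ exactly as in \cite{Wu_2010}; adding the perturbation $[\Delta\epsilon_{iq,t+1}^j]_k=[\epsilon_{iq,t+1}^j]_k-[\epsilon_{iq,t}^j]_k$ componentwise gives (\ref{row_form_1}).

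The main obstacle is the rigorous justification that each trimmed value survived by an honest agent lies in the convex hull of the \emph{honest} agents' current values (coordinate-wise), rather than merely in the convex hull of all received values — this is precisely the place where Byzantine entries could otherwise leak in. The argument is the classical trimmed-mean filtering lemma: because at most $b_i$ (resp.\ $b$) of the discarded extremes at each end can be Byzantine, for any surviving received value there exist an honest value at least as large and an honest value at least as small (this uses $|\mathcal N_{ij}^i|\ge 2b_i+1$, resp.\ $|\mathcal N_{ij}|\ge 2b+1$, which follows from Assumption~\ref{assp.2}); hence the surviving value is sandwiched between two honest values and thus lies in their convex hull. Care is needed because the resulting weights are data-dependent and random (they depend on the ordering, hence on the Byzantine messages and on $\mathscr U_t$), so I would state explicitly that $\bm M_{pq,t}^{1k},\bm M_{ph,t}^{2k},\bm Y_{iq,t}^k$ are $\mathscr U_t$-measurable (or adversary-measurable) row-stochastic matrices whose precise entries are immaterial for the subsequent analysis — only row-stochasticity and the source-component property of the induced reduced graphs will be used downstream. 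Everything else is bookkeeping: verifying the normalization constants $|\mathcal N_{ij}|-2b+1$ and $|\mathcal N_{ij}^i|-2b_i+1$ match the count of surviving terms (including the agent's own value, which is always retained by construction of $\mathcal Y_{ij,t}^{pqk}$ and $\mathcal R_{ij,t}^k$), and tracking the index map $\mathcal S(p,q)=\sum_{h=1}^{p-1}|\mathcal H_h|+q$ so that the removed row/column lines up with agent $(p,q)$'s own position in the global honest ordering.
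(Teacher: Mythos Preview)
Your proposal is correct and follows essentially the same approach as the paper, which simply states that the proof follows the reasoning of Claim~2 in \cite{Vaidya_2012}. You have spelled out in detail exactly that classical trimmed-mean-to-convex-combination argument (the sandwiching of surviving values between honest values, yielding data-dependent row-stochastic weight matrices), so your outline is a faithful expansion of what the paper leaves to citation.
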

{\bf Proof.} The proof follows the same reasoning as that of Claim 2 in \cite{Vaidya_2012}.\hfill$\blacksquare$

\begin{remark}
\rm{From Lemma \ref{lemma_21}, Assumption \ref{assp.2} is essential for two key reasons. Firstly, it ensures that the local variables (i.e., $[\bm{x}^{pq}_{ij,t}]_k$ and $[v^{j}_{iq,t}]_k$) of each honest agent can be represented as linear combinations of the corresponding variables from its honest neighbors and its own local variables. Secondly, it guarantees that the CTM method remains well-defined, as  each honest agent discards $2b_i$ (or $2b$) of the largest and smallest values received from neighbors, thereby preserving at least one value from neighbors.}
\end{remark}

Based on (\ref{row_form_4}), the following lemma establishes an upper bound of the estimation error for each honest agent.
\begin{lemma}\label{lemma_6}\label{lemma_2}
\rm{Under Assumption \ref{assp.2}, for any $i,p \in [N]$, $j \in \mathcal H_{i}$, $h \in \mathcal B_{p}$, $q\in \mathcal H_{p}$, and $t\geq2$, there exists a constant $v\in(0,1)$ such that
\begin{equation}\label{eq.01}
\begin{aligned}
\left\|\bm x^{pq}_{ij,t}-x_{pq,t}\right\|&\leq C_{1}v^{t-1}+C_{2}\sum_{r=0}^{t-2}v^{r}(\alpha_{t-r-1}+\xi_{t-r-1}),
\end{aligned}
\end{equation}
\begin{equation}\label{eq.02}
\begin{aligned}
\left\|\bm x^{ph}_{ij,t}-x_{ph,t}^{*}\right\|&\leq \widetilde C_{1}v^{t-1}+C_{3}\sum_{r=0}^{t-2}v^{r}\Phi_{\mathcal B}^{ph}(t-r-1)\quad\quad\quad
\\&\quad+C_{4}\sum_{r=0}^{t-2}v^{r}\xi_{t-r-1},
\end{aligned}
\end{equation}
where $C_{1}=\sum_{k=1}^{d}\sum_{i\in[N]}\sum_{j\in \mathcal H_{i}}\left\|[\bm x^{pq}_{ij,1}]_{k}-[x_{pq,1}]_{k}\right\|$, $\widetilde C_{1}=\sum_{k=1}^{d}\sum_{i\in[N]}\sum_{j\in \mathcal H_{i}}\left\|[\bm x^{ph}_{ij,1}]_{k}-[x_{ph,1}^{*}]_{k}\right\|$, $C_{2}=2d\sqrt{|\mathcal H|}R$, $C_{3}=\sqrt{|\mathcal H|}$, $C_{4}=d\sqrt{|\mathcal H|}R$, $\Phi_{\mathcal B}^{ph}(t-r-1)=\sum_{k=1}^{d}\left\|[x^{*}_{ph,t-r}]_{k}-[x^{*}_{ph,t-r-1}]_{k}\right\|$, and $\xi_{t-r-1}=1-\zeta_{t-r-1}$.}
\end{lemma}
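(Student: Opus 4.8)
The plan is to treat the recursions in Lemma~\ref{lemma_21} as linear time-varying systems driven by a bounded perturbation, and then to bound the resulting state by a geometrically decaying term plus a convolution of the perturbation magnitudes. First I would focus on \eqref{row_form_4}. Since $\bm M^{1k-}_{pq,t}$ is the submatrix obtained by deleting the $\mathcal S(p,q)$-th row and column from a row-stochastic matrix, it is substochastic, and under Assumption~\ref{assp.2} the reduced-graph source-component property guarantees that products of consecutive such matrices over a window of bounded length have a norm strictly below one; this is exactly the kind of estimate behind the contraction used in \cite{Vaidya_2012,Wu_2010}. I would collect the resulting uniform contraction factor and call it $v\in(0,1)$, and absorb the window length into the constants. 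The driving term in \eqref{row_form_4} is $\delta_t[x_{pq,t}]_k[\bm M^{1k-}_{pq,t}]_{(:,\mathcal S(p,q))}$ together with the $(\zeta_t-\delta_t)$ correction; using $0<\delta_t<\zeta_t<1$, boundedness of $\Omega_{ij}$ by radius $R$ (Assumption~\ref{assp.4}), and $\xi_t=1-\zeta_t$, each such term is bounded in norm by a constant multiple of $(\alpha_{t}+\xi_{t})$ after I also account for the change in the ``true'' reference $x_{pq,t}$ between consecutive steps, which is controlled by the step size $\alpha_t$ through the update \eqref{eq.3} (the projected step moves $x_{ij,t}$ by at most $O(\alpha_t)$ times a bounded quantity). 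Unrolling the recursion from $t$ down to the initial time then gives \eqref{eq.01}, with $C_1$ coming from the $t=1$ mismatch summed over all coordinates and honest agents, and $C_2=2d\sqrt{|\mathcal H|}R$ collecting the dimension-and-cardinality bookkeeping from passing between the stacked vector $\bm x^{pqk}_t$ and its per-coordinate/per-agent pieces.

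For \eqref{eq.02} the structure is the same but the reference point is now the (time-varying) Byzantine equilibrium component $x^{*}_{ph,t}$ rather than the honest decision $x_{pq,t}$. Here the recursion \eqref{row_form_1}-style argument uses $\bm M^{2k}_{ph,t}$, a genuine row-stochastic matrix, so I would subtract $\bm 1\,[x^{*}_{ph,t}]_k$ from both sides of $\bm x^{phk+}_{t+1}=\delta_t\bm M^{2k}_{ph,t}\bm x^{phk+}_t+(\zeta_t-\delta_t)\bm x^{phk+}_t$, exploit that $\bm M^{2k}_{ph,t}\bm 1=\bm 1$, and treat the per-step change $[x^{*}_{ph,t+1}]_k-[x^{*}_{ph,t}]_k$ as the perturbation; its accumulated contribution is precisely $\Phi_{\mathcal B}^{ph}(\cdot)$, which explains the $C_3=\sqrt{|\mathcal H|}$ convolution term. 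The extra $(\zeta_t-\delta_t)\,\bm x^{phk+}_t$ deviation from $\bm 1[x^{*}_{ph,t}]_k$ contributes a term bounded by $(1-\zeta_t)\cdot(\text{diameter})$, i.e.\ an $O(\xi_t)$ perturbation, yielding the $C_4=d\sqrt{|\mathcal H|}R$ convolution. Again unrolling and using the uniform contraction $v$ gives \eqref{eq.02}, with $\widetilde C_1$ the initial mismatch.

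The main obstacle I expect is establishing the uniform geometric contraction rate $v\in(0,1)$ that works simultaneously for all the substochastic/row-stochastic matrices $\bm M^{1k-}_{pq,t}$, $\bm M^{2k}_{ph,t}$ (and implicitly $\bm Y^k_{iq,t}$) arising from the CTM filtering under adversarial behavior. The delicacy is that these matrices are \emph{time-varying and adversary-dependent}: the set of neighbors retained after trimming changes every round and with the Byzantine messages, so one cannot point to a single fixed matrix. The standard resolution — which I would invoke via the reduced-graph / source-component machinery of \cite{Vaidya_2012,Wu_2010} guaranteed by Assumption~\ref{assp.2} — is that over any block of $L$ consecutive rounds (with $L$ bounded in terms of $|\mathcal H|$) the product of the relevant matrices has all entries in a row uniformly bounded below on the source component, forcing a uniform scrambling/contraction coefficient; taking $v$ to be the $L$-th root of that coefficient and enlarging $C_1,\widetilde C_1$ to cover the at-most-$L$ transient steps closes the argument. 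Verifying that the trimming step does not destroy this property — precisely the content of the two remarks following Lemmas~\ref{lemma_21} and~\ref{lemma_6}, namely that at least one honest neighbor's value always survives and that Assumption~\ref{assp.2} forces enough in-neighbors — is the crux, and everything else is the routine telescoping and norm bookkeeping sketched above.
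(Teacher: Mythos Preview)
Your high-level plan---form an error recursion from Lemma~\ref{lemma_21}, bound the perturbation by $O(\alpha_t+\xi_t)$ via \eqref{eq.3} and Assumption~\ref{assp.4}, then telescope---is exactly what the paper does, and your handling of \eqref{eq.02} (subtract $\bm 1[x^{*}_{ph,t}]_k$, use $\bm M^{2k}_{ph,t}\bm 1=\bm 1$, treat the step change of $x^{*}_{ph,t}$ as the $\Phi_{\mathcal B}^{ph}$ perturbation) also matches.

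The one substantive difference is the mechanism behind the contraction factor $v\in(0,1)$. You propose to obtain it from Assumption~\ref{assp.2} via the block-product scrambling argument of \cite{Vaidya_2012,Wu_2010}, and you flag this as the main obstacle. The paper sidesteps this entirely: the single-step matrix acting on the error is $(\zeta_t-\delta_t)I+\delta_t\bm M^{1k-}_{pq,t}$ (resp.\ with $\bm M^{2k}_{ph,t}$), and its $2$-norm is bounded directly using $\|A\|_2\le\sqrt{\|A\|_1\|A\|_\infty}$. Row sums give $\|A\|_\infty\le\zeta_t<1$, and the design condition $|\mathcal H|\delta_t+\zeta_t<1$ (stated right after \eqref{eq.5}) controls the column sums so that $\|A\|_1<1$; taking $v=\max_t\|A\|_2<1$ finishes it in one step, with no windowing, no absorbing block lengths into the constants, and no adversary-dependence to worry about. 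In other words, Assumption~\ref{assp.2} is needed only to guarantee the row-stochastic matrix representation of Lemma~\ref{lemma_21}; the contraction itself comes purely from the tunable parameters $\delta_t,\zeta_t$, not from graph connectivity. Your route is not wrong, but it is heavier than necessary here, and the source-component/scrambling machinery you invoke is really designed to contract the \emph{disagreement} of row-stochastic products rather than to give a spectral-norm bound on substochastic ones, so you would have to rework that argument anyway.
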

{\bf Proof.} The proof is deferred to Appendix \ref{prof_lemma_2}.\hfill$\blacksquare$

Define $G_{iqj}^t(x) \triangleq \nabla_{x_{iq,t}} \mathbf{E}_{\theta_{i,t}} \left[ f_j^{i,t}(x, \theta_{i,t}) \right]$, then
\begin{align*}
\Delta F^{\text{sup}}_T
\triangleq \sup_{x \in \Omega} 
\sum_{t=2}^T \sum_{i \in [N]} \sum_{j, q \in \mathcal H_i}
\left\| G_{iqj}^t(x) - G_{iqj}^{t-1}(x) \right\|.
\end{align*}

An upper bound of the tracking error between $v^{j}_{iq,t}$ and $\frac{1}{|\mathcal H_{i}|}\sum_{j\in\mathcal H_{i}}\epsilon^{j}_{iq,t}$ is given by Lemma \ref{lemma_3}.
\begin{lemma}\label{lemma_3}
\rm{Under Assumptions \ref{assp.2}-\ref{assp.5},\ref{assp.7}, if $\min \limits_{q\in\mathcal H_{i}}|\mathcal N^{i}_{iq}|>\frac{|\mathcal H_{i}|}{2}+2b_i-1$, then for any $i \in [N]$ and $q \in \mathcal H_{i}$, there holds
\begin{equation}\label{eq.03}
\begin{aligned}
&\sum_{t=1}^{T}\mathbf E_{\mathcal U_{t}}\left\|\bm v_{iq,t}-(\bm 1_{|\mathcal H_{i}|}\otimes I_{d})\bar \epsilon_{iq,t}\right\|\\&\leq \mathcal O\left(1+\sum_{t=1}^{T}\alpha_{t}+\sum_{t=1}^{T}\xi_{t}+\Phi(T)+\Delta F^{\text{sup}}_{T}\right),
\end{aligned}
\end{equation}
where $\bm v_{iq,t}=col(v^{j}_{iq,t})_{j \in \mathcal H_{i}}$ and $\bar \epsilon_{iq,t}=\frac{1}{|\mathcal H_{i}|}\sum_{j\in\mathcal H_{i}}\epsilon^{j}_{iq,t}$.}
\end{lemma}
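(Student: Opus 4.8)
The plan is to treat the recursion \eqref{row_form_1} for $\bm v_{iq,t}^{k}$ as a perturbed averaging dynamics driven by a sequence of row-stochastic matrices $\bm Y_{iq,t}^{k}$, and to bound the deviation of $\bm v_{iq,t}^{k}$ from its ``average'' component by accumulating the input increments $\bm \epsilon_{iq,t+1}^{k}-\bm \epsilon_{iq,t}^{k}$ weighted by a geometrically decaying consensus factor. First I would unroll \eqref{row_form_1} to write
$\bm v_{iq,t}^{k}-(\bm 1_{|\mathcal H_{i}|})\bar v_{iq,t}^{k}$ as a telescoped sum in which each past increment $\bm\epsilon_{iq,r+1}^{k}-\bm\epsilon_{iq,r}^{k}$ is propagated forward by the product of matrices $\bm Y_{iq,t-1}^{k}\cdots\bm Y_{iq,r+1}^{k}$. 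The condition $\min_{q\in\mathcal H_i}|\mathcal N^{i}_{iq}|>\tfrac{|\mathcal H_i|}{2}+2b_i-1$ together with Assumption~\ref{assp.2} guarantees (via the construction in \cite{Wu_2010} and the source-component property) that these backward products contract geometrically toward a rank-one matrix; I would invoke the standard ergodicity estimate to get a constant $v\in(0,1)$ (the same $v$ as in Lemma~\ref{lemma_2} may be reused, or a new one) with $\|(\bm Y_{iq,t-1}^{k}\cdots\bm Y_{iq,r+1}^{k})-\text{rank-one}\|\le c\,v^{t-r-1}$.

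Next I would bound the increment $\|\epsilon_{iq,t+1}^{j}-\epsilon_{iq,t}^{j}\|$. Writing out the definition of $\epsilon_{iq,t}^{j}$, this difference splits into (i) the change in $\nabla_{x_{iq,t}}f_j^{i,t}(\bm x_{ij,t},\theta_{i,t})-\nabla_{x_{iq,t}}f_j^{i,t}(\bm\tau_{ij,t},\theta_{i,t})$ between rounds $t$ and $t+1$, and (ii) the change in $G_{iqj}^{t}(\bm\tau_{ij,t})$. For part (i) I would use the Lipschitz bound from Assumption~\ref{assp.7} and the boundedness of the gradients from Assumption~\ref{assp.111}, controlling $\|\bm x_{ij,t+1}-\bm x_{ij,t}\|$ through the update rules \eqref{eq.3}--\eqref{eq.4} (which contribute $\mathcal{O}(\alpha_t)$ and $\mathcal{O}(\xi_t)$ terms, exactly mirroring Lemma~\ref{lemma_2}), plus the discrete ``jumps'' of $\bm\tau_{ij,t}$ that occur only when $\bmod(t,s(t))=0$; since $s(t)\to1$ such jumps become frequent but each is small because $\bm x_{ij,t}$ moves slowly, and using $\|\bm x^{pq}_{ij,t}-x_{pq,t}\|$ from Lemma~\ref{lemma_2} I can further relate these to $x_{pq,t}$ and ultimately to $\Phi(T)$. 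For part (ii), the temporal variation of the true gradient mapping is precisely what $\Delta F^{\text{sup}}_T$ measures, so summing over $t$ yields the $\Delta F^{\text{sup}}_T$ term. Also note $\bar\epsilon_{iq,t}$ and $(\bm 1_{|\mathcal H_i|}\otimes I_d)\bar\epsilon_{iq,t}$ are the average-consensus target, so the rank-one part of the propagated sum matches $(\bm 1\otimes I_d)\bar\epsilon_{iq,t}$ up to a transient $\mathcal{O}(v^{t})$ term from the initial condition $\bm v_{iq,1}^{k}=\bm 0$.

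Then I would assemble the bound: $\sum_{t=1}^{T}\mathbf E_{\mathcal U_t}\|\bm v_{iq,t}-(\bm 1\otimes I_d)\bar\epsilon_{iq,t}\|$ is dominated by $\sum_{t}\big(C v^{t-1}+\sum_{r=0}^{t-2}v^{r}\,[\,\text{increment at }t-r-1\,]\big)$; swapping the order of summation turns the double sum into $\big(\sum_{r\ge0}v^r\big)\cdot\sum_{\tau}[\text{increment at }\tau]=\frac{1}{1-v}\sum_\tau[\text{increment at }\tau]$, and the increment sums evaluate to $\mathcal{O}(1+\sum_t\alpha_t+\sum_t\xi_t+\Phi(T)+\Delta F^{\text{sup}}_T)$ by the estimates above, which is exactly \eqref{eq.03}. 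The main obstacle I anticipate is handling the $\bm\tau_{ij,t}$ resetting mechanism rigorously inside the increment bound: because $\tau$ is held constant between resets and then jumps, the na\"ive per-step Lipschitz estimate of $\|\epsilon_{iq,t+1}^j-\epsilon_{iq,t}^j\|$ would pick up a full $\|\bm x_{ij,t}-\bm x_{ij,\tau_{\text{last}}}\|$ term at each reset, and one must argue carefully — using that consecutive reset times differ by at most $s(t)$ and each single-step motion is $\mathcal{O}(\alpha_t+\xi_t)$ — that the accumulated contribution of these jumps over the horizon is still absorbed into the claimed $\mathcal{O}(\cdot)$ rate rather than blowing up; a secondary technical point is verifying the geometric ergodicity of the $\bm Y_{iq,t}^{k}$ products uniformly in the (adversarially chosen) Byzantine behavior, which is where Assumption~\ref{assp.2} and the degree condition must be used in full strength.
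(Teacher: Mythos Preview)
Your overall architecture---unrolling \eqref{row_form_1}, invoking geometric ergodicity of the backward products $\Phi^{k}_{iq,[t-1,g]}=\bm Y_{iq,t-1}^{k}\cdots\bm Y_{iq,g}^{k}$, bounding the increments $\|\Delta\bm\epsilon_{iq,t}\|$ via Assumption~\ref{assp.7} and Lemma~\ref{lemma_2}, and swapping the order of summation---matches the paper's proof. However, one step contains a genuine gap.

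You write that ``the rank-one part of the propagated sum matches $(\bm 1\otimes I_d)\bar\epsilon_{iq,t}$ up to a transient $\mathcal{O}(v^{t})$ term.'' This is not correct: the ergodicity estimate only gives $\|\Phi^{k}_{iq,[t-1,g]}-\bm 1_{|\mathcal H_i|}p^{k\top}_{iq,g}\|\le C\Theta^{t-g}$ for some stochastic vector $p^{k}_{iq,g}$, and there is no reason for $p^{k}_{iq,g}$ to equal the uniform vector $\tfrac{1}{|\mathcal H_i|}\bm 1$. The matrices $\bm Y_{iq,t}^{k}$ produced by the CTM filter are only row-stochastic, not doubly stochastic, so their left Perron limits are generically nonuniform. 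Consequently the rank-one surrogate $\hat{\bm v}_{iq,t}^{k}:=\bm 1\sum_{g=2}^{t}p^{k\top}_{iq,g}\Delta\bm\epsilon_{iq,g}^{k}$ differs from $\bm 1[\bar\epsilon_{iq,t}]_{k}$ by a \emph{persistent} bias, not a transient one. The paper inserts $\hat{\bm v}_{iq,t}^{k}$ as an intermediate quantity and bounds $\|\hat{\bm v}_{iq,t}^{k}-\bm 1[\bar\epsilon_{iq,t}]_{k}\|$ separately, using that each entry of $p^{k}_{iq,g}$ is at most $1/\min_{q}\{|\mathcal N^{i}_{iq}|-2b_i+1\}$. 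This is precisely where the degree condition $\min_{q}|\mathcal N^{i}_{iq}|>\tfrac{|\mathcal H_i|}{2}+2b_i-1$ enters: it forces $\tfrac{|\mathcal H_i|}{\min_q\{|\mathcal N^{i}_{iq}|-2b_i+1\}}-1<1$, which is what controls this bias term. You attributed the degree hypothesis to the ergodicity rate, but Assumption~\ref{assp.2} alone already yields geometric contraction toward a rank-one matrix; the extra degree condition is needed for the ``$p$ versus uniform'' comparison that your sketch omits. Without this second piece, your decomposition does not close and the claimed bound \eqref{eq.03} does not follow.
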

{\bf Proof.} The proof is deferred to Appendix \ref{prof_lemma_3}.\hfill$\blacksquare$

The following theorems establish upper bounds for both $\mathcal{R}_{\mathcal{H}}(T)$ and $\mathcal{CV}_{\mathcal{H}}(T)$.
\begin{theorem}\label{Theorem_1}
\rm{Under Assumptions \ref{assp.2}-\ref{assp.11}, if $\left\{\beta_{t}\right\}_{t\in[T]}$ and $\left\{\eta_{t}\right\}_{t\in[T]}$ are constant step sizes and $0<\beta_{t}\eta_{t}<\frac{1}{2}$, then
\begin{equation}\label{eq.05}
\begin{aligned}
\mathbf{E}\left[\mathcal{CV}_{\mathcal H}(T)\right]&\leq \mathcal O\Big(T^{\frac{1}{2}}+T^{\frac{1}{2}}\sum_{t=1}^{T}\alpha_{t}^{\frac{1}{2}}+T^{\frac{1}{2}}\sum_{t=1}^{T}\xi_{t}^{\frac{1}{2}}+T^{\frac{1}{2}}\sqrt{\Phi(T)}
\\&\quad\quad\quad+\beta_{T}^{\frac{1}{2}}\eta_{T}^{\frac{1}{2}}T^{\frac{3}{2}}\Big).
\end{aligned}
\end{equation}}
\end{theorem}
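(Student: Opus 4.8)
\emph{Proof plan.} The plan is to control the expected squared per‑round violation $\mathbf E\|[G_t(x^*_{\mathcal H,t})]_+\|^2$, sum it over $t$, and then pass to $\mathcal{CV}_{\mathcal H}(T)$ by a single Cauchy--Schwarz step. First I would replace the comparator $x^*_{\mathcal H,t}$ by the local estimate $\bm x_{ij,t}$ of an arbitrary honest agent $(i,j)$: since $[\,\cdot\,]_+$ is nonexpansive and $G_t$ has uniformly bounded gradient on the bounded set containing all (estimated) iterates (Assumption~\ref{assp.111}), hence is $L_G$‑Lipschitz there,
\begin{align*}
\big\|[G_t(x^*_{\mathcal H,t})]_+\big\|^2\le 2\big\|[G_t(\bm x_{ij,t})]_+\big\|^2+2L_G^2\big\|x^*_{\mathcal H,t}-\bm x_{ij,t}\big\|^2 ,
\end{align*}
and summing the squared form of Lemma~\ref{lemma_2} (Cauchy--Schwarz inside each geometric convolution $\sum_{r\ge0}v^r(\cdot)$, $\Omega$ bounded, $\Phi_{\mathcal B}(T)\le\Phi(T)$) gives $\sum_t\mathbf E\|x^*_{\mathcal H,t}-\bm x_{ij,t}\|^2=\mathcal O(1+\sum_t\alpha_t+\sum_t\xi_t+\Phi(T))$. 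Next I would pass from $G_t(\bm x_{ij,t})$ to the surrogate $y_{ij,t}$ of \eqref{eq.1}: since $\bm x_{ij,t},\bm\tau_{ij,t}$ are $\mathscr U_{t-1}$‑measurable and $\omega_t$ is independent of $\mathscr U_{t-1}$ (Assumption~\ref{assp.11} and the definition of $G_t$), $\mathbf E[y_{ij,t}\mid\mathscr U_{t-1}]=G_t(\bm x_{ij,t})$, so applying conditional Jensen to $v\mapsto[v]_+$ and then to $\|\cdot\|^2$ yields $\mathbf E\|[G_t(\bm x_{ij,t})]_+\|^2\le\mathbf E\|[y_{ij,t}]_+\|^2$. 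The task therefore reduces to bounding $\sum_{t}\mathbf E\|[y_{ij,t}]_+\|^2$.

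For this — the crux — I would combine the dual recursion with the primal step. From \eqref{eq.2}, $\lambda_{ij,t+1}=[(1-\eta_t\beta_t)\lambda_{ij,t}+\eta_t y_{ij,t}]_+$; the elementary inequality $\|[u+v]_+\|^2\ge\|u\|^2+2\langle u,v\rangle+\|[v]_+\|^2$ valid for $u\succeq0$ (taken with $u=(1-\eta_t\beta_t)\lambda_{ij,t}$, $v=\eta_t y_{ij,t}$) together with $(1-\eta_t\beta_t)^2\ge1-2\eta_t\beta_t$ give a one‑step drift
\begin{align*}
\eta_t^2\big\|[y_{ij,t}]_+\big\|^2\le \|\lambda_{ij,t+1}\|^2-\|\lambda_{ij,t}\|^2+2\eta_t\beta_t\|\lambda_{ij,t}\|^2-2\eta_t\langle\lambda_{ij,t},y_{ij,t}\rangle+\mathcal O(\eta_t^2)\|y_{ij,t}\|^2 .
\end{align*}
To tame $-\langle\lambda_{ij,t},y_{ij,t}\rangle$ (conditionally $-\langle\lambda_{ij,t},G_t(\bm x_{ij,t})\rangle$) I would couple it to \eqref{eq.3}: writing $\widehat x_{ij,t+1}:=P_{\Omega_{ij}}[x_{ij,t}-\gamma_t v^j_{ij,t}-\gamma_t(\nabla_{x_{ij,t}}y_{ij,t})^{\!\top}\lambda_{ij,t+1}]$, the projection inequality, convexity of $f^{i,t}_j$ and of every coordinate of $g_t$ (Assumption~\ref{assp.5}), and $\lambda_{ij,t+1}\succeq0$ turn $\gamma_t\langle(\nabla_{x_{ij,t}}y_{ij,t})^{\!\top}\lambda_{ij,t+1},\widehat x_{ij,t+1}-x^*_{ij,t}\rangle$ into a difference of constraint values minus $\tfrac{1-\alpha_t}{2\gamma_t}\|x_{ij,t}-x^*_{ij,t}\|^2-\tfrac1{2\gamma_t}\|x_{ij,t+1}-x^*_{ij,t}\|^2$, plus the $v^j_{ij,t}$‑tracking error and the mod‑SVRG variance. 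Adding the primal and dual inequalities, the Lagrangian cross term $\pm\lambda_{ij,t+1}^{\!\top}g_t$ cancels; summing over $t$ and over all honest agents, the $\|x_{ij,t}-x^*_{ij,t}\|^2$ part telescopes up to the comparator drift (split $x^*_{ij,t}\to x^*_{ij,t+1}$ and use $\Omega$ bounded, giving $\mathcal O(\Phi(T))$), the tracking terms are controlled by Lemma~\ref{lemma_3}, the variance residuals are summable because $s(t)$ is integer‑valued with $s(t)\to1$, and the regularization residual $\sum_t\beta_t\|\lambda_{ij,t+1}\|^2$ is absorbed after bounding $\lambda_{ij,t+1}\preceq\eta_t\sum_{r\le t}(1-\eta_r\beta_r)^{t-r}[y_{ij,r}]_+$ and a Cauchy--Schwarz/re‑absorption step (valid since $0<\beta_t\eta_t<\tfrac12$), which leaves a $\mathcal O(\beta_T\eta_T T^{2})$ contribution. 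The upshot is $\sum_t\mathbf E\|[G_t(x^*_{\mathcal H,t})]_+\|^2=\mathcal O(1+\sum_t\alpha_t+\sum_t\xi_t+\Phi(T)+\beta_T\eta_T T^{2})$.

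Finally, Cauchy--Schwarz gives $\mathcal{CV}_{\mathcal H}(T)=\sum_t\|[G_t(x^*_{\mathcal H,t})]_+\|\le\sqrt T\,\big(\sum_t\|[G_t(x^*_{\mathcal H,t})]_+\|^2\big)^{1/2}$; taking expectations, Jensen for $\sqrt{\cdot}$, the preceding bound, and then $\sqrt{a+b}\le\sqrt a+\sqrt b$ together with $\sqrt{\sum_t\alpha_t}\le\sum_t\sqrt{\alpha_t}$ (and likewise for $\xi_t$) reproduce \eqref{eq.05} with $\beta_t\equiv\beta_T$, $\eta_t\equiv\eta_T$. The main obstacle is the crux step: one must set up the combined primal--dual potential so that the Lagrangian cross terms cancel exactly, so that the moving SGNE comparator $x^*_{ij,t}$ and the CTM‑filtered consensus/tracking errors of $\bm x_{ij,t}$ and of $v^j_{iq,t}$ (Lemmas~\ref{lemma_2} and \ref{lemma_3}) collapse into precisely the listed error terms, and so that the $-\tfrac{\beta_t}2\|\lambda_t\|^2$ regularization residual is re‑absorbed without incurring a $1/(\beta_T\eta_T)$ blow‑up — this self‑bounding of $\sum_t\beta_t\|\lambda_{ij,t+1}\|^2$ is what ultimately yields the $\beta_T^{1/2}\eta_T^{1/2}T^{3/2}$ term.
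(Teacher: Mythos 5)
Your outer scaffolding matches the paper's and is sound: the closing Cauchy--Schwarz step $\mathcal{CV}_{\mathcal H}(T)\le\sqrt{T\sum_{t}\|[G_t(x^*_{\mathcal H,t})]_+\|^2}$, the control of $\|\bm x_{ij,t}-x^*_{\mathcal H,t}\|$ through Lemma~\ref{lemma_2}, and the identity $\mathbf E[y_{ij,t}\mid\mathscr U_{t-1}]=G_t(\bm x_{ij,t})$ are all correct. The genuine gap is your crux: reducing everything to $\sum_t\mathbf E\|[y_{ij,t}]_+\|^2$ and extracting that quantity from the dual recursion coupled with the primal step. In your one-step drift the violation enters with prefactor $\eta_t^2$, so isolating $\sum_t\|[y_{ij,t}]_+\|^2$ forces a division by $\eta_t^2$, after which the remainder terms are inflated by $1/\eta_t$ or $1/\eta_t^2$: for instance the regularization residual becomes $\sum_t\tfrac{2\beta_t}{\eta_t}\|\lambda_{ij,t}\|^2$, which under either available bound on the multiplier ($\|\lambda_{ij,t}\|=\mathcal O(\eta_T T)$ from the analogue of \eqref{eq.04}, or $\|\lambda_{ij,t}\|=\mathcal O(1/\beta_T)$) is of order $\beta_T\eta_T T^{3}$ or $T/(\beta_T\eta_T)$ --- not the $\beta_T\eta_T T^{2}$ your "upshot" asserts, and under the step sizes of Assumption~\ref{assp.13} (where $\eta_T=T^{\eta}$, $\eta<-1$, and $\beta_T\eta_T\to 0$) these terms are polynomially too large to yield \eqref{eq.05}. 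The primal coupling cannot repair this: (i) the projection inequality for \eqref{eq.3} upper-bounds $+\langle\lambda_{ij,t+1},\,g_t(\text{iterate})-g_t(\text{comparator})\rangle$, whereas your drift needs an upper bound on $-\langle\lambda_{ij,t},y_{ij,t}\rangle$, so the advertised cancellation of $\pm\lambda_{ij,t+1}^{\top}g_t$ does not go through with the stated signs and indices; (ii) it would import $1/(\alpha_t\gamma_t)$-weighted telescoped distances, the $v^j_{ij,t}$ tracking error, and mod-SVRG residuals, i.e.\ $\alpha$-, $\gamma$- and Lemma~\ref{lemma_3}-dependent quantities, none of which are permitted on the right-hand side of \eqref{eq.05}; and (iii) Theorem~\ref{Theorem_1} does not even assume the neighbor condition $\min_{q\in\mathcal H_i}|\mathcal N^{i}_{iq}|>\tfrac{|\mathcal H_i|}{2}+2b_i-1$ that Lemma~\ref{lemma_3} requires, so any route through the tracking machinery uses a hypothesis you have not been granted.

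The paper's proof sidesteps per-round violation of the iterates entirely. It writes the one-step dual drift \eqref{eq.71}--\eqref{eq.74} for an \emph{arbitrary} comparator $\lambda\in\mathbb R^m_{\ge0}$ and then substitutes the specific choice $\lambda=[G_t(x^*_{\mathcal H,t})]_+/\beta_t$, which makes $\|[G_t(x^*_{\mathcal H,t})]_+\|^2$ appear directly with the correct $\beta_t/\eta_t$ weighting; the leftover terms are $2\beta_t\|\lambda_{ij,t}\|\|y_{ij,t}\|$ (handled by $\sum_t\|\lambda_{ij,t}\|=\mathcal O(\eta_T T^2)$), $2\eta_t\beta_t\|y_{ij,t}\|^2$, and $2\|[G_t(x^*_{\mathcal H,t})]_+\|\,\|y_{ij,t}-G_t(x^*_{\mathcal H,t})\|$, the last controlled by Lipschitzness and Lemma~\ref{lemma_2} alone --- no primal step, no Lemma~\ref{lemma_3}, and no self-bounding of $\sum_t\beta_t\|\lambda_{ij,t+1}\|^2$. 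To salvage your argument, replace the crux with this comparator substitution; your first and last reduction steps can then be kept essentially as written.
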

{\bf Proof.} The proof is deferred to Appendix \ref{prof_lemma_7}.\hfill$\blacksquare$

\begin{theorem}\label{Theorem_2}
\rm{Under Assumptions \ref{assp.2}-\ref{assp.11}, if there exists $t_{1}\in[T]$, such that $\alpha_{t_{1}}\gamma_{t_{1}}\leq \alpha_{t}\gamma_{t}$ for any $t\in[1,t_{1}]$, $\alpha_{t}$, $\gamma_{t}$ are time-invariant for any $t>t_{1}$ and $\alpha_{t_{1}}\gamma_{t_{1}}\leq \alpha_{t_{1}+1}\gamma_{t_{1}+1}$, $\left\{\beta_{t}\right\}_{t\in[T]}$ and $\left\{\eta_{t}\right\}_{t\in[T]}$ are constant step sizes, $0<\beta_{t}\eta_{t}<\frac{1}{2}$, and $\min \limits_{q\in\mathcal H_{i}}|\mathcal N^{i}_{iq}|>\frac{|\mathcal H_{i}|}{2}+2b_i-1$, then
\begin{equation}\label{eq.06}
\begin{aligned}
&\mathbf{E}\left[\mathcal R_{\mathcal H}(T)\right]\leq\mathcal O\Big(T^{\frac{1}{2}}+T^{\frac{1}{2}}\sum_{t=1}^{T}\alpha_{t}^{\frac{1}{2}}+T^{\frac{1}{2}}\sum_{t=1}^{T}\xi_{t}^{\frac{1}{2}}+T^{\frac{1}{2}}\sqrt{\Phi(T)}
\\&\quad+T\sum_{t=1}^{T}\gamma_{t}^{\frac{1}{2}}+T^{\frac{3}{2}}\eta_{T}^{\frac{1}{2}}+T^{\frac{3}{2}}\eta_{T}\sum_{t=1}^{T}\gamma_{t}^{\frac{1}{2}}+T\Delta F^{\text{sup}}_{T}\sum_{t=1}^{T}\gamma_{t}^{\frac{1}{2}}
\\&\quad+T^{\frac{1}{2}}\sqrt{\Delta F^{\text{sup}}_{T}}+T^{\frac{1}{2}}\sqrt{\mathcal{CV}_{\mathcal H}(T)}\Big).~~~~~~~~~~~~~~
\end{aligned}
\end{equation}}
\end{theorem}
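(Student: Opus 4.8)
The plan is a primal--dual dynamic-regret analysis built around the v-SGNE optimality conditions (\ref{re_3})--(\ref{re_2}), a Lyapunov recursion on the aggregated primal error $E_t^{x}\triangleq\sum_{i\in[N]}\sum_{j\in\mathcal H_i}\|x_{ij,t}-x_{ij,t}^{*}\|^{2}$ together with the aggregated dual error $E_t^{\lambda}\triangleq\sum_{i\in[N]}\sum_{j\in\mathcal H_i}\|\lambda_{ij,t}-\lambda_{t}^{*}\|^{2}$, and the estimation/tracking bounds of Lemmas~\ref{lemma_2} and \ref{lemma_3}. First I would bound the per-round, per-agent gap: by convexity of $F_{i,t}$ in $x_{ij,t}$ (Assumption~\ref{assp.5}), $F_{i,t}(x_{ij,t},x_{-ij,t}^{*})-F_{i,t}(x_{t}^{*})\le\langle\nabla_{x_{ij,t}}F_{i,t}(x_{ij,t},x_{-ij,t}^{*}),x_{ij,t}-x_{ij,t}^{*}\rangle$. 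I split the gradient into its value at the SGNE $x_t^{*}$ plus a remainder bounded by $l_f\|x_{ij,t}-x_{ij,t}^{*}\|$ (Assumption~\ref{assp.7}); summing the $\nabla_{x_{ij,t}}F_{i,t}(x_t^{*})$-contribution over all honest agents and inserting the characterization (\ref{re_3}) together with $\lambda_t^{*\top}G_t(x_t^{*})=0$ (equivalent to (\ref{re_2})) rewrites the leading term as a sum of inner products with $x_{ij,t}^{*}-P_{\Omega_{ij}}[\cdots]$, a $\lambda_t^{*}$-weighted constraint term handled by complementary slackness, and a cross term in $\lambda_{ij,t+1}-\lambda_t^{*}$.

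The core is the primal descent estimate. From (\ref{eq.3}), the nonexpansiveness of $P_{\Omega_{ij}}$ and (\ref{re_3}) I obtain $\|x_{ij,t+1}-x_{ij,t}^{*}\|^{2}\le\|x_{ij,t}-x_{ij,t}^{*}\|^{2}-2\alpha_t\gamma_t\langle h_{ij,t}-h_{ij,t}^{*},x_{ij,t}-x_{ij,t}^{*}\rangle+\alpha_t\gamma_t^{2}\|h_{ij,t}\|^{2}$, where $h_{ij,t}\triangleq v_{ij,t}^{j}+(\nabla_{x_{ij,t}}y_{ij,t})^{\top}\lambda_{ij,t+1}$ is the direction actually used and $h_{ij,t}^{*}\triangleq\nabla_{x_{ij,t}}F_{i,t}(x_t^{*})+\nabla_{x_{ij,t}}G_t(x_t^{*})^{\top}\lambda_t^{*}$; the $\|h_{ij,t}\|^{2}$ term is bounded through Assumption~\ref{assp.111}. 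I then successively replace $v_{ij,t}^{j}$ by the consensus target $\bar\epsilon_{ij,t}$ of Lemma~\ref{lemma_3}, pass from $\bar\epsilon_{ij,t},y_{ij,t},\nabla_{x_{ij,t}}y_{ij,t}$ to $\nabla_{x_{ij,t}}F_{i,t}(x_t),G_t(x_t),\nabla_{x_{ij,t}}G_t(x_t)$ using conditional unbiasedness (Assumption~\ref{assp.11}) and the vanishing mod-SVRG variances (with residuals controlled by $l_f,l_{g_1},l_{g_2}$ and $\|x_t-\tau_t\|$), and pass from $\bm x_{ij,t}$ to the true $x_t$ and finally to $x_t^{*}$ via Lemma~\ref{lemma_2} and the Lipschitz constants. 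Summing over all $i$ and $j\in\mathcal H_i$, the $\sigma$-strong monotonicity of $\mathbb F_t$ (Assumption~\ref{assp.8}) turns the aggregated inner product into a contraction $-c\,\sigma\alpha_t\gamma_t E_t^{x}$ for some $c>0$, leaving as residuals the $\alpha_t\gamma_t^{2}$-terms (Assumption~\ref{assp.111}), the estimation error of Lemma~\ref{lemma_2}, the tracking error of Lemma~\ref{lemma_3}, and the dual-coupling $\alpha_t\gamma_t\langle(\nabla_{x_{ij,t}}y_{ij,t})^{\top}(\lambda_{ij,t+1}-\lambda_t^{*}),x_{ij,t}-x_{ij,t}^{*}\rangle$.

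For the dual side I reuse the machinery already developed for Theorem~\ref{Theorem_1}: from (\ref{eq.2}), $\lambda_t^{*}=[\lambda_t^{*}+G_t(x_t^{*})]_{+}$ and $0<\beta_t\eta_t<\tfrac12$ I get a recursion for $E_{t+1}^{\lambda}$ whose residuals are $\Phi(T)$-type drift from the time variation of $\lambda_t^{*}$ (organized through (\ref{Phi(T)})), $\beta_T,\eta_T$ terms, and the $\langle\lambda_{ij,t+1}-\lambda_t^{*},[G_t(x_{\mathcal H,t}^{*})]_{+}\rangle$-type couplings that, after Cauchy--Schwarz, generate the $T^{1/2}\sqrt{\mathcal{CV}_{\mathcal H}(T)}$ term. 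Adding the primal recursion of the previous step to a suitably weighted copy of the dual recursion, dividing by $\alpha_t\gamma_t$ and summing over $t\in[T]$ is where the step-size hypotheses enter: monotonicity of $\alpha_t\gamma_t$ on $[1,t_1]$, constancy on $(t_1,T]$, and the junction comparison $\alpha_{t_1}\gamma_{t_1}\le\alpha_{t_1+1}\gamma_{t_1+1}$ are exactly what let the telescoped weighted primal errors $\tfrac{1}{\alpha_t\gamma_t}E_t^{x}$ collapse after splitting the horizon at $t_1$, while the drift terms are collected as $T\cdot(\cdot)$ quantities. Carrying the geometric sums from Lemmas~\ref{lemma_2}--\ref{lemma_3} (which contribute the $\sum_t\alpha_t$, $\sum_t\xi_t$, $\Phi(T)$ and $\Delta F^{\text{sup}}_T$ pieces), substituting $\sum_t\gamma_t$ for the $\gamma_t$-sums, taking total expectation so the martingale-difference cross terms vanish (Assumption~\ref{assp.11}), and applying $\sqrt{ab}\le\tfrac12(a+b)$ and Cauchy--Schwarz wherever square roots of running sums appear then yields (\ref{eq.06}).

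The main obstacle is the primal step: $\sigma$-strong monotonicity is a property of the \emph{global} pseudogradient $\mathbb F_t$, whereas each honest agent updates with a filtered, variance-reduced, \emph{estimated} direction $h_{ij,t}$ built from $\bm x_{ij,t}$ and $v_{ij,t}^{j}$ rather than from $x_t$ and the true gradient. Extracting a clean aggregated contraction therefore requires controlling four error sources at once --- the gap $\bm x_{ij,t}\to x_t$ (Lemma~\ref{lemma_2}), the consensus gap $v_{ij,t}^{j}\to\bar\epsilon_{ij,t}$ (Lemma~\ref{lemma_3}), the vanishing mod-SVRG variance, and the dual-iterate error $\lambda_{ij,t+1}-\lambda_t^{*}$ --- and showing that each, after summation and division by $\alpha_t\gamma_t$, is either absorbed by the contraction term or reduces to one of the admissible residuals; this is also where the Byzantine filtering (via Lemmas~\ref{lemma_21}--\ref{lemma_2}) must be shown not to corrupt the aggregated inner product. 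A secondary difficulty is the non-monotone step-size bookkeeping in the final telescoping, which forces separate treatments of $t\le t_1$ and $t>t_1$ and a careful comparison at the junction $t_1$.
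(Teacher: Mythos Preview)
Your high-level skeleton (primal recursion from (\ref{eq.3}) and (\ref{re_3}), coordinate-wise expansion, strong monotonicity to extract $\sigma\sum_{t,i,j}\|x_{ij,t}-x_{ij,t}^{*}\|^{2}$, Lemmas~\ref{lemma_2}--\ref{lemma_3} for the estimation/tracking residuals, and the split-at-$t_{1}$ telescoping) matches the paper. However, two structural choices differ from the paper's proof, and one of them is a genuine gap.

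\textbf{Where the paper is simpler.} The paper does \emph{not} bound the regret through convexity and inner products. It uses Lipschitz continuity of $F_{i,t}$ in $x_{ij,t}$ directly:
\[
\mathcal R_{\mathcal H}(T)\le l_{f}\sum_{t,i,j}\|x_{ij,t}-x_{ij,t}^{*}\|\le l_{f}|\mathcal H|\sqrt{T}\,\sqrt{\sum_{t,i,j}\mathbf E\|x_{ij,t}-x_{ij,t}^{*}\|^{2}},
\]
so the whole task reduces to bounding $\sum_{t,i,j}\mathbf E\|x_{ij,t}-x_{ij,t}^{*}\|^{2}$. That quantity comes out of the primal recursion alone, decomposed as $S_{1}\le S_{2}+\dots+S_{6}+\text{drift}$, with $S_{1}\ge\sigma\sum\|x_{ij,t}-x_{ij,t}^{*}\|^{2}$ by Assumption~\ref{assp.8}. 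The $\mathcal{CV}_{\mathcal H}(T)$ term enters through $S_{3}=\sum\langle x_{ij,t}-x_{ij,t}^{*},\nabla G_{t}(x_{t}^{*})^{\top}\lambda_{t}^{*}\rangle$, handled by convexity of $G_{t}$, complementary slackness $\lambda_{t}^{*\top}G_{t}(x_{t}^{*})=0$, and the uniform bound $\|\lambda_{t}^{*}\|\le\vartheta$.

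\textbf{The gap.} The paper never introduces a dual Lyapunov $E_{t}^{\lambda}=\sum\|\lambda_{ij,t}-\lambda_{t}^{*}\|^{2}$. All dual residuals in the primal recursion (your $h_{ij,t}$-terms involving $\lambda_{ij,t+1}$) are absorbed into $S_{6}$, which is bounded using only the crude estimates $\sum_{t}\|\lambda_{ij,t}\|\le\mathcal O(\eta_{T}T^{2})$ and $\|\lambda_{ij,t+1}\|^{2}\le\mathcal O(\eta_{T}^{2}T^{2})$ already obtained in the proof of Theorem~\ref{Theorem_1}. Your plan instead runs a recursion on $E_{t}^{\lambda}$ and claims the resulting ``$\Phi(T)$-type drift from the time variation of $\lambda_{t}^{*}$'' is ``organized through (\ref{Phi(T)})''. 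But $\Phi(T)$ is defined only in terms of $x_{ij,t}^{*}$; nothing in the assumptions or lemmas controls $\sum_{t}\|\lambda_{t+1}^{*}-\lambda_{t}^{*}\|$, and boundedness $\|\lambda_{t}^{*}\|\le\vartheta$ only gives a linear-in-$T$ bound, which is useless. So as written, your dual Lyapunov branch does not close. Dropping it and using the direct $\|\lambda_{ij,t}\|$ bounds (as the paper does) removes the issue entirely and also shortens the argument.
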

{\bf Proof.} The proof is deferred to Appendix \ref{prof_lemma_8}.\hfill$\blacksquare$

\begin{assumption}\label{assp.12}
\rm{The accumulated variation $\Phi(T)$ and $\Delta F^{\text{sup}}_{T}$ grow sublinearly, i.e., $\Phi(T)=\mathcal O(T^{\phi})$ and $\Delta F^{\text{sup}}_{T}=\mathcal O(T^{s})$ with $\phi,s \in [0,1)$.}
\end{assumption}

\begin{assumption}\label{assp.13}
\rm{The step sizes $\left\{\alpha_{t}\right\}_{t \in \left[T\right]}$, $\left\{\beta_{t}\right\}_{t \in \left[T\right]}$ and $\left\{\gamma_{t}\right\}_{t \in \left[T\right]}$, and the tunable parameters $\left\{\zeta_{t}\right\}_{t \in \left[T\right]}$ and $\left\{\eta_{t}\right\}_{t \in \left[T\right]}$ exhibit the properties listed below.}
\begin{enumerate}
\item{$\left\{\eta_{t}\right\}_{t \in \left[T\right]}$ is a constant step size satisfying $\eta_{t}=T^{\eta}$ with $\eta<-1$},
\item{$\left\{\alpha_{t}\right\}_{t \in \left[T\right]}$ and $\left\{\gamma_{t}\right\}_{t \in \left[T\right]}$ are segmented by limited time $t_{1}$, i.e.,\begin{equation*}\alpha_{t}=\left\{\begin{aligned}&t^{a_{1}},~~~&t\leq t_{1},\\&T^{a_{2}},~~&t>t_{1}.\end{aligned}\right.\quad\quad\gamma_{t}=\left\{\begin{aligned}&t^{b_{1}},~~~&t\leq t_{1},\\&T^{b_{2}},~~&t>t_{1}.\end{aligned}\right.\end{equation*}where $\alpha_{t_{1}}\gamma_{t_{1}}\leq\alpha_{t_{1}+1}\gamma_{t_{1}+1}$, $a_{1}<a_{2}<-1$, and $b_{1}<b_{2}<\min\left\{-2,-3-2\eta,-2-2s\right\}$},
\item{$\left\{\beta_{t}\right\}_{t \in \left[T\right]}$ is a constant step size satisfying $\beta_{t}=T^{\beta}$with $\beta<-1-\eta$} and $0<\beta_{t}\eta_{t}<\frac{1}{2}$,
\item{$\left\{\zeta_{t}\right\}_{t \in \left[T\right]}$ is a positive step size satisfying $\zeta_{t}=1-(t+1)^{\zeta}$ with $\zeta<-1$}.
\end{enumerate}
\end{assumption}

\begin{theorem}\label{Theorem_3}
\rm{Under Assumptions \ref{assp.2}-\ref{assp.13}, if $\min \limits_{q\in\mathcal H_{i}}|\mathcal N^{i}_{iq}|>\frac{|\mathcal H_{i}|}{2}+2b_i-1$, then $\mathcal R_{\mathcal H}(T)$ and $\mathcal{CV}_{\mathcal H}(T)$ grow sublinearly in expectation.}
\end{theorem}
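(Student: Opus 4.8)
The plan is to derive Theorem~\ref{Theorem_3} as a direct corollary of Theorems~\ref{Theorem_1} and \ref{Theorem_2}: one substitutes the explicit step-size and parameter schedule of Assumption~\ref{assp.13}, together with the sublinear-variation hypothesis of Assumption~\ref{assp.12}, into the two bounds \eqref{eq.05} and \eqref{eq.06}, and then verifies summand by summand that every term is $\mathcal O(T^{c})$ for some $c<1$; the exponents $c_{1},c_{2}$ in the definition of sublinear growth are simply the maxima of the finitely many exponents so obtained. Before applying the two theorems I would check that their hypotheses hold under Assumption~\ref{assp.13}: $\{\beta_{t}\}$ and $\{\eta_{t}\}$ are constant step sizes with $0<\beta_{t}\eta_{t}<\tfrac{1}{2}$ by items~(1) and (3); the segmentation of $\{\alpha_{t}\}$ and $\{\gamma_{t}\}$ at the (possibly horizon-dependent) switching time $t_{1}$, together with $\alpha_{t_{1}}\gamma_{t_{1}}\le\alpha_{t_{1}+1}\gamma_{t_{1}+1}$, is item~(2); and the connectivity condition $\min_{q\in\mathcal H_{i}}|\mathcal N^{i}_{iq}|>\tfrac{|\mathcal H_{i}|}{2}+2b_{i}-1$ is assumed in the statement.

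I would first dispatch $\mathcal{CV}_{\mathcal H}(T)$ via \eqref{eq.05}. With $\xi_{t}=1-\zeta_{t}=(t+1)^{\zeta}$ and $\zeta<-1$, and $\alpha_{t}$ equal to $t^{a_{1}}$ for $t\le t_{1}$ and to the frozen value $T^{a_{2}}$ for $t>t_{1}$ with $a_{1}<a_{2}<-1$, each step-size partial sum is split at $t_{1}$: the head $\sum_{t\le t_{1}}(\cdot)$ is bounded by a fixed power of $t_{1}=\mathcal O(T)$ whose exponent is strictly below $\tfrac{1}{2}$, while the tail contributes at most $T$ copies of a frozen value whose square root has $T$-exponent below $-\tfrac{1}{2}$; hence $T^{1/2}\sum_{t}\alpha_{t}^{1/2}$ and $T^{1/2}\sum_{t}\xi_{t}^{1/2}$ carry $T$-exponents strictly below $1$. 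The term $T^{1/2}\sqrt{\Phi(T)}=\mathcal O(T^{(1+\phi)/2})$ is sublinear since $\phi<1$ by Assumption~\ref{assp.12}, and $\beta_{T}^{1/2}\eta_{T}^{1/2}T^{3/2}=T^{(\beta+\eta+3)/2}$ is sublinear precisely because item~(3) imposes $\beta<-1-\eta$. Together with the bare $T^{1/2}$ term this gives $\mathbf E[\mathcal{CV}_{\mathcal H}(T)]=\mathcal O(T^{c_{2}})$ for some $c_{2}\in[0,1)$.

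I would then treat $\mathcal R_{\mathcal H}(T)$ via \eqref{eq.06}. The first four summands are bounded exactly as above; $T^{3/2}\eta_{T}^{1/2}=T^{(3+\eta)/2}$ and $T^{1/2}\sqrt{\Delta F^{\text{sup}}_{T}}=\mathcal O(T^{(1+s)/2})$ are sublinear because $\eta<-1$ and $s<1$, respectively. The three $\gamma$-dependent summands $T\sum_{t}\gamma_{t}^{1/2}$, $T^{3/2}\eta_{T}\sum_{t}\gamma_{t}^{1/2}$ and $T\,\Delta F^{\text{sup}}_{T}\sum_{t}\gamma_{t}^{1/2}$ are handled with the three-branch bound $b_{2}<\min\{-2,\,-3-2\eta,\,-2-2s\}$: after splitting $\sum_{t}\gamma_{t}^{1/2}$ at $t_{1}$ into a summable head and a tail of at most $T$ frozen values $T^{b_{2}/2}$, one branch is matched to the bare prefactor, one to $\eta_{T}=T^{\eta}$, and one to $\Delta F^{\text{sup}}_{T}=\mathcal O(T^{s})$, so each product retains a $T$-exponent below $1$. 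The one self-referential summand $T^{1/2}\sqrt{\mathcal{CV}_{\mathcal H}(T)}$ is closed by Jensen's inequality: $\mathbf E[\sqrt{\mathcal{CV}_{\mathcal H}(T)}]\le\sqrt{\mathbf E[\mathcal{CV}_{\mathcal H}(T)]}=\mathcal O(T^{c_{2}/2})$, so this term is $\mathcal O(T^{(1+c_{2})/2})$, again sublinear. Taking the maximum over all exponents yields $\mathbf E[\mathcal R_{\mathcal H}(T)]=\mathcal O(T^{c_{1}})$ for some $c_{1}\in[0,1)$.

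I expect the main obstacle to be organizational rather than technical: the schedule of Assumption~\ref{assp.13} is tuned so that nearly every one of its inequalities ($\eta<-1$, $\beta<-1-\eta$, $a_{2}<-1$, $\zeta<-1$, and each of the three branches of the bound on $b_{2}$) is tight against exactly one summand of \eqref{eq.05}--\eqref{eq.06}, so the verification requires carefully pairing each constraint with the correct term and tracking how $\Phi(T)=\mathcal O(T^{\phi})$ and $\Delta F^{\text{sup}}_{T}=\mathcal O(T^{s})$ interact with the step-size partial sums. The one genuinely non-mechanical point is the bootstrapping: \eqref{eq.06} still contains $\mathcal{CV}_{\mathcal H}(T)$, so the argument must be carried out in the order ``constraint violation first, regret second'', with Jensen's inequality used to convert the already-established rate for $\mathbf E[\mathcal{CV}_{\mathcal H}(T)]$ into a bound on $\mathbf E[\sqrt{\mathcal{CV}_{\mathcal H}(T)}]$; one also has to respect the segmentation at the (horizon-dependent) time $t_{1}$, splitting each step-size sum into a summable head and a frozen tail and checking both pieces separately.
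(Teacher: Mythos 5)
Your strategy coincides with the paper's: the paper's entire proof of Theorem~\ref{Theorem_3} is the one-line remark that the result follows by substituting Assumptions~\ref{assp.12}--\ref{assp.13} into the bounds \eqref{eq.05}--\eqref{eq.06} and using $\sum_{t=1}^{T}t^{c}\le T^{1+c}$ for $-1<c<0$. Your term-by-term verification, the ordering (constraint violation first, regret second), and the use of Jensen's inequality to close the self-referential summand $T^{1/2}\sqrt{\mathcal{CV}_{\mathcal H}(T)}$ are exactly the intended argument, worked out in far more detail than the paper provides; the treatment of \eqref{eq.05} and of every summand of \eqref{eq.06} whose prefactor is $T^{1/2}$ or $T^{3/2}\eta_{T}$ is correct.

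There is, however, a genuine gap at the two $\gamma$-dependent summands with prefactor $T$, and it is precisely the point the paper's one-line proof glosses over. Under Assumption~\ref{assp.13} one has $\gamma_{1}=1^{b_{1}}=1$, so the ``summable head'' $\sum_{t\le t_{1}}\gamma_{t}^{1/2}$ is bounded \emph{below} by $1$ (and $t_{1}\ge 1$ cannot be dispensed with; the condition $\alpha_{t_{1}}\gamma_{t_{1}}\le\alpha_{t_{1}+1}\gamma_{t_{1}+1}$ in fact forces $t_{1}$ to grow like a positive power of $T$). Hence $T\sum_{t=1}^{T}\gamma_{t}^{1/2}=\Omega(T)$ and $T\,\Delta F^{\text{sup}}_{T}\sum_{t=1}^{T}\gamma_{t}^{1/2}=\Omega\bigl(T\,\Delta F^{\text{sup}}_{T}\bigr)$, so your assertion that ``each product retains a $T$-exponent below $1$'' fails for the head contributions of these two terms: the three branches of $b_{2}<\min\{-2,\,-3-2\eta,\,-2-2s\}$ control only the frozen tail $T\cdot T^{b_{2}/2}$, and no choice of $b_{1}$ can make a prefactor-$T$ term sublinear once the first summand of the $\gamma$-sum equals $1$. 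This defect is inherited from the statement of Theorem~\ref{Theorem_2} (specifically the bound on $S_{5}$ in Appendix~\ref{prof_lemma_8}) together with Assumption~\ref{assp.13}, rather than introduced by you; repairing it would require either rescaling $\gamma_{t}$ by a negative power of $T$ uniformly over $t$ so that $\sum_{t}\gamma_{t}\to 0$, or a sharper estimate of $S_{5}$. But as written, the pairing of constraints to summands that your proposal relies on does not go through for these two terms, so the sublinearity of $\mathbf{E}[\mathcal R_{\mathcal H}(T)]$ is not actually established by the argument you describe.
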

{\bf Proof.}
The results are proved by repeatedly using the fact that $\sum_{t=1}^{T}t^{c}\leq T^{1+c}$, $\forall -1<c<0$.\hfill$\blacksquare$

\begin{remark}
\rm{Theorem \ref{Theorem_3} demonstrates that the decentralized algorithm proposed in this paper can achieve sublinear growth of both $\mathcal R_{\mathcal H}(T)$ and $\mathcal{ CV}_{\mathcal H}(T)$ in expectation with appropriate step sizes. These results are of high interest for the online games. Compared to \cite{Sahoo_2021} and \cite{Dong_2024}, our approach not only accounts for randomness and multi-cluster games but also achieves sublinear regret growth in decentralized settings.} 
\end{remark}

\section{Numerical Experiments}\label{sec.four}

In this section, the robustness of the DBROSA algorithm against various Byzantine attacks is evaluated based on a commodity market game.

\subsection{Commodity Market Games}\label{sec.four.01}
In the commodity market game, $N$ parent companies produce commodities sold over $m$ markets. Each parent company $i \in [N]$ is composed of $n_{i}$ subsidiaries. Each subsidiary $j \in [n_{i}]$ in parent company $i$ participates in at most $m$ markets and decides on the quantity $x_{ij,t}\in \mathbb R^{d}$ of commodities to be delivered to the markets. Because the productivity of each company is limited, the decision variable $x_{ij,t}$ is bounded by the local constraint $\Omega_{ij}=\left\{x_{ij,t}\in \mathbb R^{d}|\bm 0_{d}\leq x_{ij,t}\leq X_{ij}\right\}$. Moreover, the capacity of each market $k$ also has an upper bound $b_{k}$ so that the coupled constraint function is given by $Ax_{t} \leq \omega_{t}b$ where $\omega_{t}\in\mathbb R$, $x_{t}=col(x_{ij,t})_{i\in[N],j\in[n_{i}]}\in \mathbb R^{nd}$, $b=col(b_{k})_{k\in[m]}\in \mathbb R^{m}$, and $A=[A_{11},\cdots, A_{Nn_{N}}]\in \mathbb R^{m\times nd}$ with $A_{ij}\in\mathbb R^{m\times d}$ denoting the situation that subsidiary $j$ in parent company $i$ participates in the markets. Specifically, $[A_{ij}]_{pq}=1$ if the $q$-th commodity produced by subsidiary $j$ in company $i$ is sent to the $p$-th market and $[A_{ij}]_{pq}=0$, otherwise.

Each subsidiary $j$ in parent company $i$ aims to minimize the cost of producing commodities and maximize the profit from selling them. Thus, the local cost function of each subsidiary $j$ in parent company $i$ is given by
\begin{align*}
f^{i,t}_{j}(x_{i,t},x_{-i,t},\theta_{i,t})=c^{i,t}_{j}(x_{ij,t})-P(Ax_t,\theta_{i,t})^{\top}A_{ij}x_{ij,t},
\end{align*}
where $c^{i,t}_{j}(x_{ij,t})=x_{ij,t}^{\top}Q_{ij,t}x_{ij,t}+q_{ij,t}^{\top}x_{ij,t}$ is a strongly convex cost function of production with $Q_{ij,t}$ and $q_{ij,t}$ denoting a time-varying diagonal matrix and a time-varying $d$-dimensional vector. Furthermore, $P(Ax_t,\theta_{i,t})$ is a linear pricing function, which depends on the total of commodities sold to each market and random factors in the market, meaning that
\begin{align*}
P(Ax_t,\theta_{i,t})=\bar P-D(\theta_{i,t})Ax_t,
\end{align*}
where $\bar P=col(\bar P_{k})_{k \in [m]}$ and $D(\theta_{i,t})=diag(d_{k}(\theta_{i,t}))_{k\in[m]}$.

The cost function of parent company $i$ is defined as $F_{i,t}(x_{i,t},x_{-i,t})=\frac{1}{|\mathcal H_i|}\sum_{j\in\mathcal H_{i}}\mathbf E_{\theta_{i,t}}[f^{i,t}_{j}(x_{i,t},x_{-i,t},\theta_{i,t})]$ with  $|\mathcal H_{i}|$ representing the number of honest subsidiaries in each parent company $i$. It can be seen that the above game model is equivalent to the stochastic online multi-cluster game (\ref{eq.BROMG}), where each parent company is regarded as a cluster and each subsidiary is viewed as an agent within the cluster.

\subsection{Different Byzantine Attacks}\label{bb}
The performance of the proposed algorithm is tested under four typical Byzantine attacks: Gaussian, max-value, sign-flipping, and sample-duplicating attacks.

To ease exposition, denote the message transmitted by subsidiary $j$ in each parent company $i$ as $m_{ij,t}$. Let~$W_i$ be the adjacency matrix corresponding to graph~$\mathcal G_i$. For Gaussian attacks, a Byzantine subsidiary $b \in \mathcal B_{i}$ generates its message $m_{ib,t}$ from a Gaussian distribution with mean $\frac{\sum_{h\in \mathcal H_{i}}[W_{i}]_{bh}m_{ih,t}}{\sum_{h\in \mathcal H_{i}}[W_{i}]_{bh}}$ and variance $u_{1}> 0$. For max-value attacks, a Byzantine subsidiary $b \in \mathcal B_{i}$ sets its message as $m_{ib,t}=u_{2}$. For sign-flipping attacks, a Byzantine subsidiary $b \in \mathcal B_{i}$ sets its message as $m_{ib,t}=u_{3}\cdot \frac{\sum_{h\in \mathcal H_{i}}[W_{i}]_{bh}m_{ih,t}}{\sum_{h\in \mathcal H_{i}}[W_{i}]_{bh}}$ with $u_{3}< 0$. For sample-duplicating attacks, a Byzantine subsidiary $b \in \mathcal B_{i}$ chooses $m_{ib,t}$ randomly from $\left\{u_{4}\cdot m_{ih,t}\right\}_{h\in \mathcal H_{i}}$.

\subsection{Small-Scale Market Games}
The parameters are set as: $N=3$, $n_{1}=n_{2}=n_{3}=5$, $|\mathcal B|=1$, $|\mathcal H|=12$, $|\mathcal H_{1}|=4$, $|\mathcal H_{2}|=5$, $|\mathcal H_{3}|=5$, $s(t)=1$, $m=d=4$, $X_{ij}=[20,20,20,20]^\top$, $A_{ij}=I_{4\times4}$, $Q_{ij,t}=(1+i+j+t^{-5})I_{4\times4}$, $\bar D=col(m+1)_{m\in[4]}$, $q_{ij,t}=col(0.1(m+i+j+t^{-5}))_{m\in[4]}$, $b_{i}=1$, $d_{k}(\theta_{i,t})$  is sampled from a uniform distribution centered at $0.8$ with bounded variance, $w_{t}$ is sampled from a uniform distribution centered at $2$ with bounded variance, then $10$ Monte-Carlo simulations are ran and the average results with $u_{1}=10$, $u_{2}=10000$, $u_{3}=-1$ and $u_{4}=-100$ over the fully connected graph are plotted. As shown in Figs. \ref{Fig_1} and \ref{Fig_2}, $\mathcal R_{\mathcal H}(T)$ and $\mathcal{CV}_{\mathcal H}(T)$ grow sublinearly under any Byzantine attacks in \ref{bb}. 

\begin{figure}[H]
\centering
\includegraphics[scale=0.28]{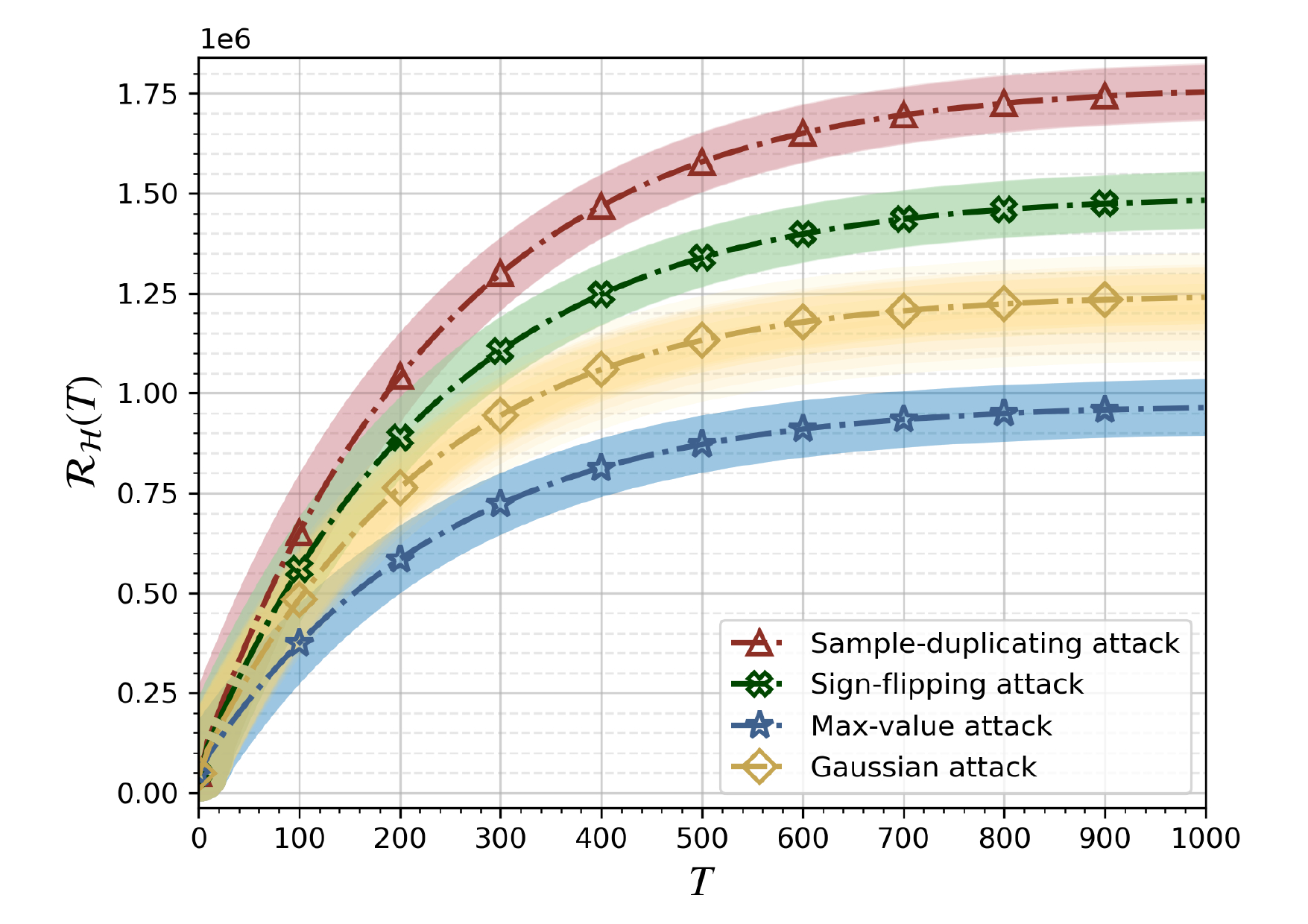}
\caption{The trajectories of $\mathcal R_{\mathcal H}(T)$ under different Byzantine attacks.}\label{Fig_1}
\end{figure}

\begin{figure}[H]
\centering
\includegraphics[scale=0.28]{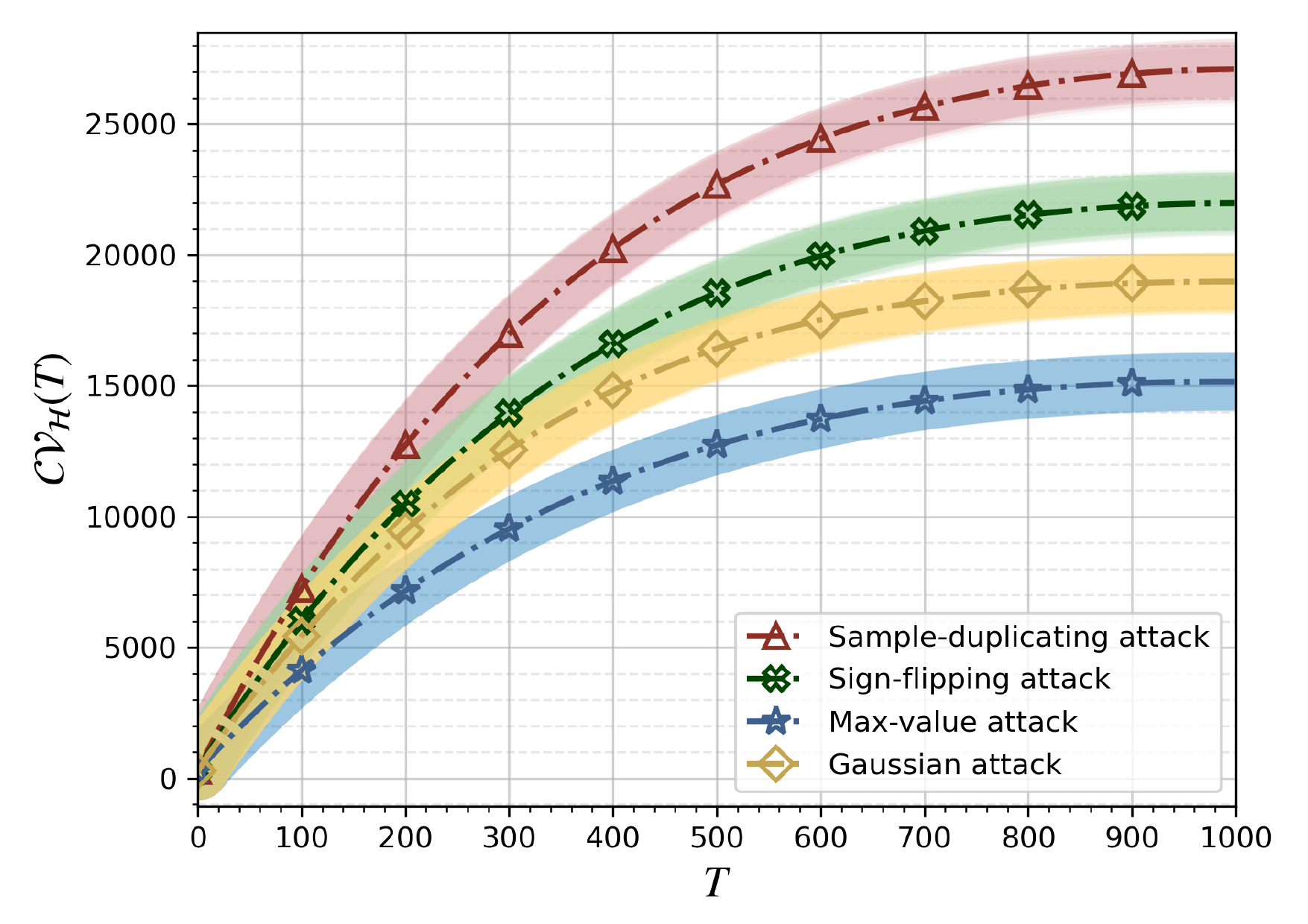}
\caption{The trajectories of $\mathcal{CV}_{\mathcal H}(T)$ under different Byzantine attacks.}\label{Fig_2}
\end{figure}

\begin{figure}[H]
\centering
\includegraphics[scale=0.1]{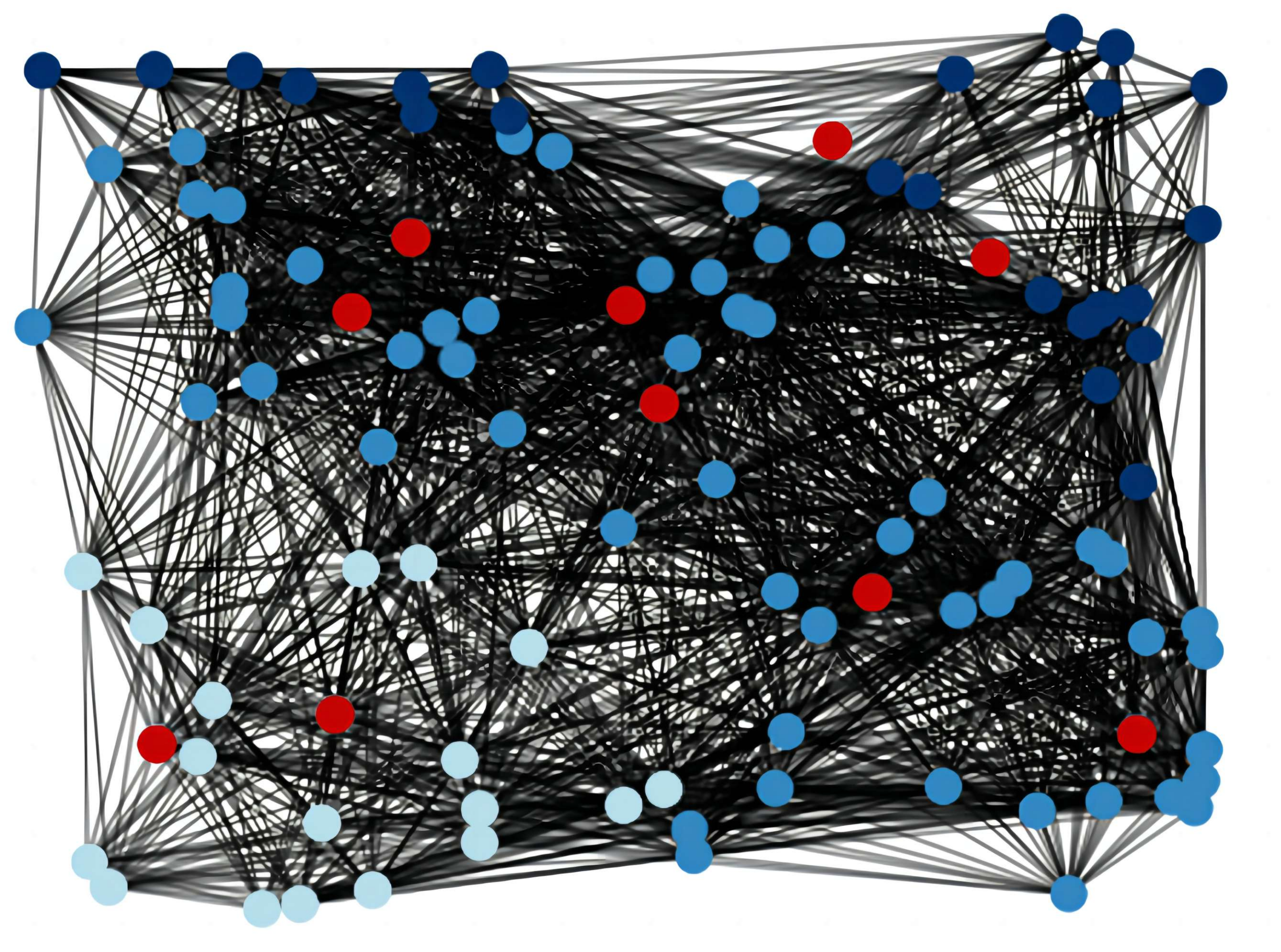}
\caption{The complex communication network between 100 companies.}\label{Fig_3}
\end{figure}

\subsection{Large-Scale Market Games}
Consider the commodity market game that consists of $100$ companies, as depicted in Fig. \ref{Fig_3}, where each circle signifies a company. Companies of the same color denote that they belong to the same cluster, and companies marked in red represent the Byzantine companies. The parameters are set as: $N=100$, $n_{1}=20$, $n_{2}=55$, $n_{3}=25$, $|\mathcal B|=10$, $|\mathcal H|=90$, $|\mathcal H_{1}|=18$, $|\mathcal H_{2}|=49$, $|\mathcal H_{3}|=23$, $s(t)=1$, $m=d=1$, $X_{ij}=5$, $A_{ij}=1$, $Q_{ij,t}=1+i+j+t^{-5}$, $q_{ij,t}=0.1(i+j+t^{-5})$, $b_{i}=3$, $d_{k}(\theta_{i,t})$ is sampled from a Gaussian distribution centered at $0.8$ with bounded variance, $w_{t}$ is sampled from a Gaussian distribution centered at $2$ with bounded variance. Then, the simulation results with $u_{1}=5$, $u_{2}=100$, $u_{3}=-0.5$ and $u_{4}=-100$ are plotted over Figs. \ref{Fig_4}-\ref{Fig_6}. As shown in Figs. \ref{Fig_4} and \ref{Fig_6}, $\mathcal R_{\mathcal H}(T)$  and $\mathcal{CV}_{\mathcal H}(T)$ grows sublinearly. 

\begin{figure}[H]
\centering
\includegraphics[scale=0.3]{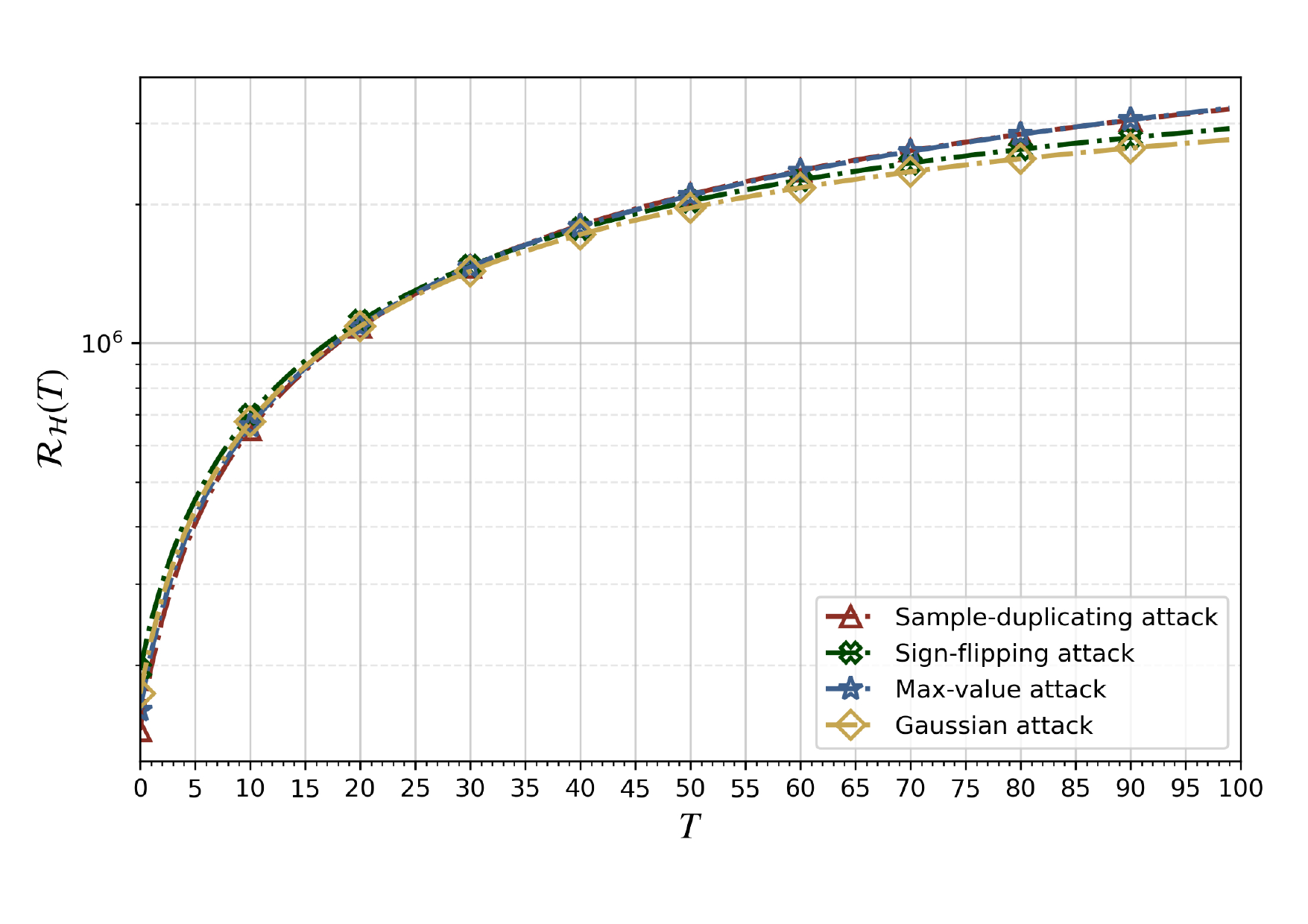}
\caption{The trajectories of $\mathcal R_{\mathcal H}(T)$ related to different Byzantine attacks over $100$ companies.}\label{Fig_4} 
\end{figure}

\begin{figure}[H]
\centering
\includegraphics[scale=0.195]{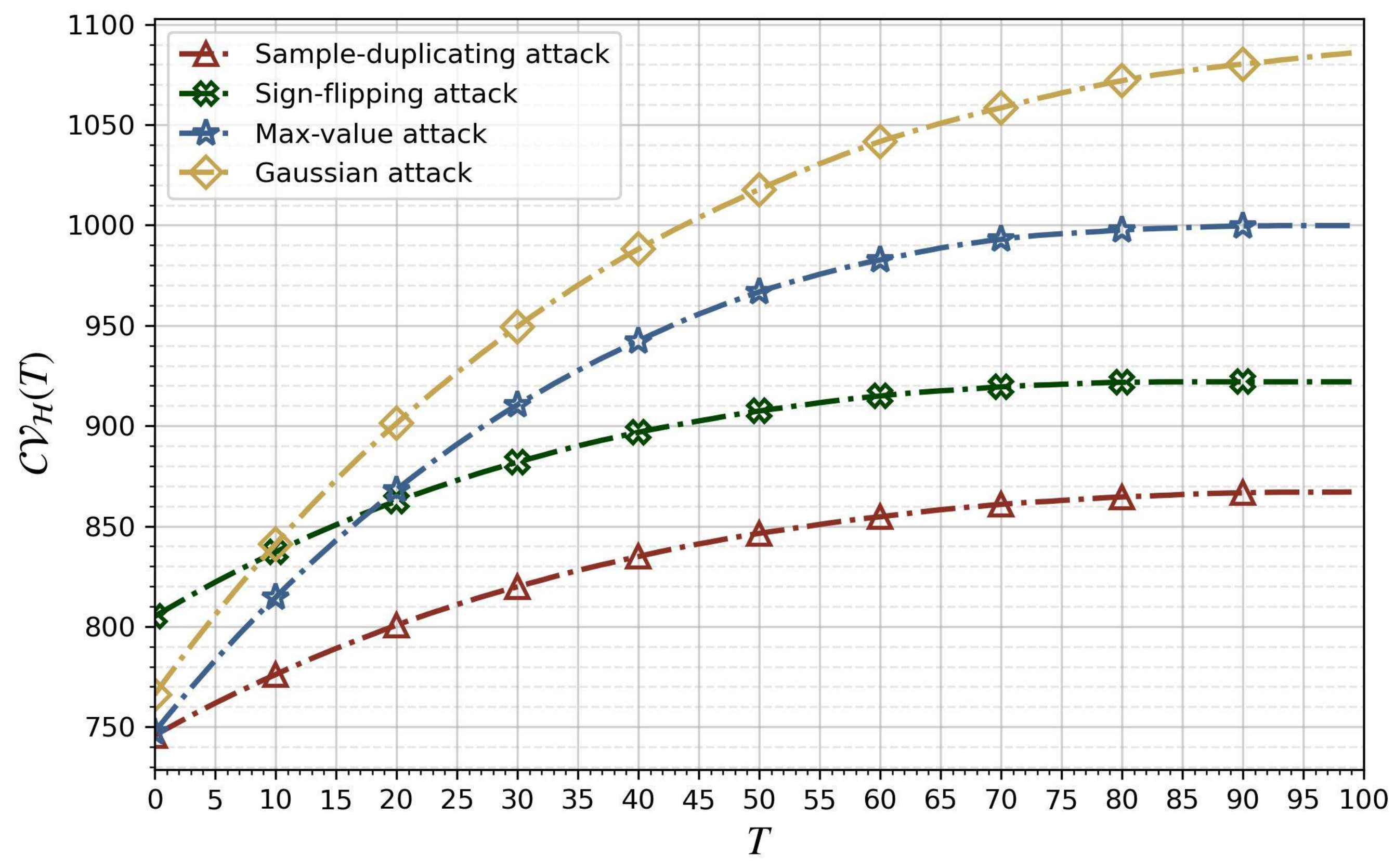}
\caption{The trajectories of $\mathcal{CV}_{\mathcal H}(T)$ related to different Byzantine attacks over $100$ companies.}\label{Fig_6} 
\end{figure}

\section{Conclusion}\label{sec.five}
This paper investigates the online multi-cluster game involving randomness under Byzantine attacks for the first time. Aiming to effectively distinguish the impact of randomness from Byzantine attacks, a novel decentralized Byzantine-resilient online SGNE seeking algorithm is proposed, which combines mod-SVRG technique, dynamic average consensus, and a robust aggregation mechanism. Furthermore, the resilient system-wise regret $\mathcal R_{\mathcal H}(T)$ and constraint violation $\mathcal{CV}_{\mathcal{H}}(T)$ are first defined within an online game framework under Byzantine attacks. These metrics are designed to measure the convergence performance of the algorithm and aim to prevent the decisions of Byzantine agents from being included in the metrics. The theoretical analysis verifies that  $\mathcal R_{\mathcal H}(T)$ and $\mathcal{CV}_{\mathcal{H}}(T)$ can grow sublinearly in expectation. Finally, the numerical experiments fully demonstrate that the algorithm is robust to Byzantine attacks.

\begin{appendix}
\linespread{0.2}
\subsection{The convergence analysis of mod-SVRG}\label{mod-SVRG}
The objective of mod-SVRG is to obtain the optimal solution to the following optimization problem.
\begin{equation}\label{svrg}
\min_{w} P(w) = \mathbf{E}_{\xi \sim \mathcal D} \left[ f(w, \xi) \right],
\end{equation}
where $\mathcal D$ denotes the probability distribution that the random variable $\xi$ follows. We assume that $\mathbf{E}_{\xi}\left[\nabla f(w, \xi)\right]=\nabla P(\omega)$. The decision variable $\omega_s$ generated by mod-SVRG can be updated by Algorithm 2.

\begin{algorithm}\label{alg:mod-svrg}
\caption{Modified SVRG (mod-SVRG) for General Stochastic Optimization}
\textbf{Input:} Learning rate $\eta$, update frequency $\{m_s\}_{s=1}^{\infty}$, and batch size sequence $\{B_s\}_{s=1}^{\infty}$.

\textbf{Initialize:} $\tilde{w}_0$.

\textbf{for} $s = 1, 2, \ldots$ \textbf{do}
\begin{enumerate}
\item[] Compute gradient estimator: 
\item[] \quad\quad\quad$\hat{\mu}_s = \frac{1}{B_s} \sum_{k=1}^{B_s} \nabla f(\tilde{w}_{s-1}, \xi_k)$ \quad ($\xi_k \overset{\text{i.i.d.}}{\sim} \mathcal{D})$.
\item[] Initialize inner iterate: $w_0 =\tilde{w}_{s-1}$.
\item[] \textbf{for} $t = 1, 2, \ldots, m_s$ \textbf{do}
\item[] \quad Randomly sample: $\xi_t \sim \mathcal{D}$.
\item[] \quad  Compute variance-reduced gradient:
\item[]  \quad\quad\quad\quad$g_t=\nabla f(w_{t-1}, \xi_t) - \nabla f(\tilde{w}_{s-1}, \xi_t) + \hat{\mu}_s$.
\item[]  \quad Update decision variable $\omega_t$:
\item[]   \quad\quad\quad\quad\quad\quad\quad\quad $w_t = w_{t-1} - \eta g_t$.
\item[] \textbf{endfor}
\item[]  \textbf{Option I:} Set $\tilde{w}_s = w_{m_s}$.
\item[]  \textbf{Option II:} Set $\tilde{w}_s =w_t$ for randomly chosen $t \in \{0, \ldots, m_s-1\}$.
\end{enumerate}
\textbf{end for}
\end{algorithm}

\begin{theorem}\label{thm:mod-svrg}
\rm{Consider mod-SVRG with option II for problem (\ref{svrg}). Let $\mathcal{F}_t$ be the $\sigma$-field generated by the random variables $\xi_t$. Suppose that $P(w)$ is $\gamma$-strongly convex and $\nabla f(w,\xi)$ is $L$-smooth. Furthermore, assume that the gradient estimator $\hat{\mu}_s$ is unbiased conditioned on $\mathcal F_t$ and the variance is bounded, i.e., $\mathbf{E}_{\mathcal{F}_t}[\hat{\mu}_s] = \nabla P(\tilde{w}_{s-1})$ and  $\mathbf{E}_{\mathcal{F}_t}\|\hat{\mu}_s - \nabla P(\tilde{w}_{s-1})\|^2 \leq \tilde{\sigma}^2$. In addition, let the learning rate satisfy $\eta < \frac{1}{4L}$, and the update frequency $m_s$ be chosen such that
\[
\alpha_s \triangleq \frac{1}{\gamma\eta(1 - 4L\eta)m_s} + \frac{4L\eta}{1 - 4L\eta} < 1.
\]
Then mod-SVRG achieves geometric convergence in expectation, i.e.,
\[
\mathbf{E}[P(\tilde{w}_s) - P(w_*)] \leq \alpha^{s} \mathbf{E}[P(\tilde{w}_{0}) - P(w_*)] + \frac{(1-\alpha^{s})\beta}{1-\alpha},
\]
where $\alpha=\mathop{\max}\limits_{s}\left\{\alpha_s\right\}$, $\beta = \frac{\eta\tilde{\sigma}^2}{1 - 4L\eta}$, and $w_* = \arg\min_{w} P(w)$.}
\end{theorem}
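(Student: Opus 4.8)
The plan is to adapt the classical SVRG analysis of Johnson and Zhang~\cite{Johnson_2013} to the setting in which the exact full gradient $\nabla P(\tilde w_{s-1})$ is replaced by the mini-batch surrogate $\hat\mu_s$, tracking the estimator variance $\tilde\sigma^2$ so that it surfaces only as the additive constant $\beta$ while the geometric factor $\alpha$ is preserved. Fix an outer iteration $s$, let $w_*=\arg\min_w P(w)$ (so $\nabla P(w_*)=0$), and set $\delta_s\triangleq\hat\mu_s-\nabla P(\tilde w_{s-1})$, which by hypothesis satisfies $\mathbf{E}\|\delta_s\|^2\le\tilde\sigma^2$. Writing the inner direction as $g_t=\tilde g_t+\delta_s$ with $\tilde g_t\triangleq\nabla f(w_{t-1},\xi_t)-\nabla f(\tilde w_{s-1},\xi_t)+\nabla P(\tilde w_{s-1})$ the exact variance-reduced direction, and conditioning on the inner history $\mathcal F_{t-1}$ up to step $t-1$ together with the batch defining $\hat\mu_s$, independence of $\xi_t$ gives $\mathbf{E}[g_t\mid\mathcal F_{t-1}]=\nabla P(w_{t-1})+\delta_s$: unlike exact SVRG, each inner step carries the same conditional bias $\delta_s$.

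First I would bound the second moment of $g_t$: using $L$-smoothness of $\nabla f(\cdot,\xi)$ (and convexity of each realization $f(\cdot,\xi)$ when available, otherwise $\|w-w_*\|^2\le\tfrac2\gamma(P(w)-P(w_*))$) to get $\mathbf{E}_\xi\|\nabla f(w,\xi)-\nabla f(w_*,\xi)\|^2\le 2L(P(w)-P(w_*))$, then combining with $\|a+b\|^2\le2\|a\|^2+2\|b\|^2$ and $\mathbf{E}\|X-\mathbf{E}X\|^2\le\mathbf{E}\|X\|^2$, one obtains
\begin{align*}
\mathbf{E}\big[\|g_t\|^2\mid\mathcal F_{t-1}\big]\le 8L\big(P(w_{t-1})-P(w_*)\big)+8L\big(P(\tilde w_{s-1})-P(w_*)\big)+2\|\delta_s\|^2.
\end{align*}
Next I would expand $\|w_t-w_*\|^2=\|w_{t-1}-w_*\|^2-2\eta\langle w_{t-1}-w_*,g_t\rangle+\eta^2\|g_t\|^2$, take $\mathbf{E}[\cdot\mid\mathcal F_{t-1}]$, substitute the conditional mean $\nabla P(w_{t-1})+\delta_s$, invoke $\gamma$-strong convexity of $P$ (which bounds $\langle w_{t-1}-w_*,\nabla P(w_{t-1})\rangle$ below by $P(w_{t-1})-P(w_*)+\tfrac\gamma2\|w_{t-1}-w_*\|^2$), control the residual cross term $-2\eta\langle w_{t-1}-w_*,\delta_s\rangle$ by Young's inequality so that the strong-convexity term absorbs it, and insert the second-moment bound. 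Taking full expectations yields a per-step estimate of the form $2\eta(1-4L\eta)\mathbf{E}[P(w_{t-1})-P(w_*)]\le\mathbf{E}\|w_{t-1}-w_*\|^2-\mathbf{E}\|w_t-w_*\|^2+8L\eta^2\mathbf{E}[P(\tilde w_{s-1})-P(w_*)]+2\eta^2\tilde\sigma^2$. Summing over $t=1,\dots,m_s$, telescoping, using $w_0=\tilde w_{s-1}$ with $\|w_0-w_*\|^2\le\tfrac2\gamma(P(\tilde w_{s-1})-P(w_*))$, and applying Option II (for which $\mathbf{E}[P(\tilde w_s)-P(w_*)]=\tfrac1{m_s}\sum_{t=0}^{m_s-1}\mathbf{E}[P(w_t)-P(w_*)]$) then delivers the one-outer-iteration recursion
\begin{align*}
\mathbf{E}[P(\tilde w_s)-P(w_*)]\le\alpha_s\,\mathbf{E}[P(\tilde w_{s-1})-P(w_*)]+\beta,
\end{align*}
with $\alpha_s$ and $\beta$ exactly as stated. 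Since $\eta<\tfrac1{4L}$ makes $1-4L\eta>0$ and the choice of $\{m_s\}$ makes $\alpha\triangleq\max_s\alpha_s<1$, unrolling this recursion over $s$ and summing the geometric series $\beta\sum_{j=0}^{s-1}\alpha^j=\tfrac{(1-\alpha^s)\beta}{1-\alpha}$ gives the claimed inequality.

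The step I expect to be the main obstacle is the rigorous treatment of the conditional bias $\delta_s$ in the one-step estimate: exact SVRG enjoys an unbiased inner direction, whereas $\hat\mu_s$ is unbiased only over the outer mini-batch randomness, not over the inner iterations. One must therefore be disciplined about the order of conditioning -- freeze the outer batch, run the inner expectations step by step, and only at the end invoke $\mathbf{E}\|\delta_s\|^2\le\tilde\sigma^2$ -- and tune the Young's-inequality weight for $\langle w_{t-1}-w_*,\delta_s\rangle$ against the $\tfrac\gamma2$ strong-convexity term so that the variance contribution remains of order $\eta^2\tilde\sigma^2$; this balancing is precisely what pins down the constants in $\alpha_s$ and $\beta$. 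A secondary subtlety is that if only $P$ (and not each $f(\cdot,\xi)$) is convex, the estimate $\mathbf{E}_\xi\|\nabla f(w,\xi)-\nabla f(w_*,\xi)\|^2\le 2L(P(w)-P(w_*))$ must be replaced by the smoothness-only bound $L^2\|w-w_*\|^2\le\tfrac{2L^2}{\gamma}(P(w)-P(w_*))$, which preserves the structure of the argument at the cost of worse absolute constants.
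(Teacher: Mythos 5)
Your route is essentially the paper's: the same decomposition $g_t=\tilde g_t+\delta_s$ into the exact SVRG direction plus the batch error, the same second-moment bound $\mathbf{E}\|g_t\|^2\le 8L[P(w_{t-1})-P(w_*)+P(\tilde w_{s-1})-P(w_*)]+2\tilde\sigma^2$ imported from the Johnson--Zhang analysis, the same one-step expansion of $\|w_t-w_*\|^2$, telescoping over the inner loop, strong convexity to convert $\|\tilde w_{s-1}-w_*\|^2$ into function values, Option~II averaging, and geometric unrolling. The one place you genuinely diverge is the cross term. The paper invokes its hypothesis $\mathbf{E}_{\mathcal F_t}[\hat\mu_s]=\nabla P(\tilde w_{s-1})$ to write $\mathbf{E}_{\mathcal F_t}[g_t]=\nabla P(w_{t-1})$ outright, so the term $-2\eta\langle w_{t-1}-w_*,\delta_s\rangle$ vanishes in expectation and the estimator error enters only through $\eta^2\mathbf{E}\|g_t\|^2$, which is exactly what produces $\beta=\eta\tilde\sigma^2/(1-4L\eta)$. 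You are right that this conditioning is the delicate point (conditioned on the batch, the inner direction carries the bias $\delta_s$, and $w_{t-1}$ is not independent of $\delta_s$ once $t>1$), and you are more candid about it than the paper is.

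However, your proposed fix does not recover the stated constants. If you bound $-2\eta\langle w_{t-1}-w_*,\delta_s\rangle$ by Young's inequality and absorb the $\|w_{t-1}-w_*\|^2$ part into the strong-convexity term, the absorption budget is $\gamma\eta\|w_{t-1}-w_*\|^2$, which forces the Young weight to be $O(\gamma)$ and leaves a residual of order $\eta\tilde\sigma^2/\gamma$ \emph{per inner step} --- order $\eta$, not $\eta^2$. After summing over $m_s$ steps and dividing by $2\eta(1-4L\eta)m_s$, this contributes an additive constant of order $\tilde\sigma^2/(\gamma(1-4L\eta))$, so the recursion you obtain has the same $\alpha_s$ but a strictly larger $\beta$ than the theorem asserts; your claim that the recursion comes out ``with $\alpha_s$ and $\beta$ exactly as stated'' is therefore not justified by the argument you sketch. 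To land on the theorem's constants you must do what the paper does: use the conditional-unbiasedness hypothesis to make the cross term vanish in expectation rather than bounding it. (This still proves geometric convergence to a neighborhood, so the qualitative conclusion survives your version; only the size of the neighborhood changes.)
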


\begin{proof}
Decompose $g_t$ as follows.
\begin{align*}
g_t = &\underbrace{\nabla f(w_{t-1},\xi_t) - \nabla f(\tilde{w}_{s-1},\xi_t) + \nabla P(\tilde{w}_{s-1})}_{g_t^{\text{SVRG}}}
\\&+ \underbrace{(\hat{\mu}_s - \nabla P(\tilde{w}_{s-1}))}_{\epsilon_s}.
\end{align*}

Taking the squared norm on both sides of the above equality and then taking expectation with respect to $\mathcal F_t$, one has
\begin{align*}
\mathbf{E}_{\mathcal F_t}\|g_t\|^2 
&\leq 2\mathbf{E}_{\mathcal F_t}\|g_t^{\text{SVRG}}\|^2 + 2\mathbf{E}_{\mathcal F_t}\|\epsilon_s\|^2 \\
&\leq 8L[P(w_{t-1}) - P(w_*) + P(\tilde{w}_{s-1}) - P(w_*)] + 2\tilde{\sigma}^2,
\end{align*}
where the last inequality results from the analysis of Theorem 1 in \cite{Johnson_2013}.

By the assumption that $\mathbf{E}_{\mathcal F_t}[\hat{\mu}_s] = \nabla P(\tilde{w}_{s-1})$, one has $\mathbf{E}_{\mathcal F_t}[g_t]= \nabla P(w_{t-1})$, which lead to
\begin{align*}
&\mathbf{E}_{\mathcal F_t}\|w_t - w_*\|^2 \\
&= \|w_{t-1} - w_*\|^2 - 2\eta(w_{t-1} - w_*)^\top \mathbf{E}_{\mathcal F_t}[g_t] + \eta^2 \mathbf{E}_{\mathcal F_t}\|g_t\|^2 \\
&\leq \|w_{t-1} - w_*\|^2 - 2\eta(w_{t-1} - w_*)^\top \nabla P(w_{t-1}) \\
&\quad + \eta^2 \left(8L[P(w_{t-1}) - P(w_*) + P(\tilde{w}_{s-1}) - P(w_*)] + 2\tilde{\sigma}^2\right) \\
&\leq \|w_{t-1} - w_*\|^2 - 2\eta(1 - 4L\eta)[P(w_{t-1}) - P(w_*)] \\
&\quad + 8L\eta^2[P(\tilde{w}_{s-1}) - P(w_*)] + 2\eta^2\tilde{\sigma}^2,
\end{align*}
where the first inequality follows from the upper bound of $\mathbf{E}_{\mathcal F_t}\|g_t\|^2 $ and the last inequality results from the convexity of $P(\omega)$.

Summing the above inequality over $t=1,\cdots,m_s$, applying strong convexity $\|w - w_*\|^2 \leq \frac{2}{\gamma}[P(w) - P(w_*)]$, and taking expectation with respect to $\mathscr F_{T}$ with $\mathscr F_{t}=\cup_{s=1}^{t}\mathcal F_s$, one has
\begin{align*}
&\sum_{t=0}^{m_s-1} \mathbf{E}[P(w_t) - P(w_*)] 
\\&\leq \frac{1+4\gamma L\eta^{2}m_s}{\gamma\eta(1-4L\eta)}\mathbf{E}[P(\tilde{w}_{s-1}) - P(w_*)] + \frac{\eta m_s\tilde{\sigma}^2}{1-4L\eta}.
\end{align*}

If $\tilde{w}_s$ is chosen by OptionII, the following relation holds.
\begin{align*}
\mathbf{E}[P(\tilde{w}_s) - P(w_*)] \leq \alpha_s \mathbf{E}[P(\tilde{w}_{s-1}) - P(w_*)] + \beta,
\end{align*}
where $\alpha_s = \frac{1}{\gamma\eta(1-4L\eta)m_s}+\frac{4L\eta}{1-4L\eta}$ and $\beta = \frac{\eta\tilde{\sigma}^2}{1-4L\eta}$. Let $\alpha=\frac{1}{\gamma\eta(1-4L\eta)\mathop{\min}\limits_{s}\left\{m_s\right\}}+\frac{4L\eta}{1-4L\eta}$, then the proof is complete.\hfill$\blacksquare$
\end{proof}

The key difference between SVRG and mod-SVRG lies in the fact that SVRG converges exactly to the optimal solution under suitable conditions, whereas mod-SVRG only guarantees convergence to a neighborhood of the optimal solution. The convergence behavior of mod-SVRG is governed by the gradient estimation error $\tilde{\sigma}$, the update frequency $m_s$, the step size $\eta$, and inherent mathematical properties of the functions. Specifically, an increase in $\tilde{\sigma}$ increases $\beta$, expanding the neighborhood range, while an increase in $m_s$ decreases $\alpha$, thereby improving the convergence rate and reducing the neighborhood range. Notably, the gradient estimator $\hat{\mu}_s$ in mod-SVRG must satisfy two key assumptions: unbiasedness and bounded variance, while the Algorithm 2 uses a  mini-batch gradient estimator \cite{Bottou_2018}, any other estimator meeting these conditions can be substituted without affecting the theoretical convergence guarantees. Clearly, when $\tilde \sigma=0$, the variance of $g_t$ in estimating the global gradient $\nabla P(\omega_{t-1})$ tends to $0$ as $s\rightarrow \infty$. This convergence occurs because $\mathbf E_{\mathcal F_t}\left\|g_t-\nabla P(\omega_{t-1})\right\|^{2}\rightarrow0$ , driven by the  convergence of both $\omega_{t-1}$ and $\tilde\omega_{s-1}$ to $\omega_*$ as $s\rightarrow \infty$. Nevertheless, when $\tilde \sigma\neq0$, if $m_s=1$ as $s\rightarrow \infty$, then $\mathbf E_{\mathcal F_t}\left\|g_t-\nabla P(\omega_{t-1})\right\|^{2}\rightarrow0$ still holds due to $\omega_{t-1}=\tilde\omega_{s-1}$ as $s\rightarrow \infty$.
\subsection{Proof of Lemma \ref{lemma_2}}\label{prof_lemma_2}
For any $q \in \mathcal H_{p}$, let $\bm e^{pqk}_{t}=col(e^{pqk}_{ij,t})_{i\in [N],j\in \mathcal H_{i},(i,j)\neq(p,q)}$ with $e^{pqk}_{ij,t}$ denoting $[\bm x^{pq}_{ij,t}]_{k}-[x_{pq,t}]_{k}$. Let $|\mathcal H|^{-}\triangleq|\mathcal H|-1$. Then, by (\ref{row_form_4}), one has
\begin{align*}
\bm e^{pqk}_{t+1}&=\delta_{t}\bm M_{pq,t}^{1k-}\bm e^{pqk}_{t}+\bm 1_{|\mathcal H|^{-}}([x_{pq,t}]_{k}-[x_{pq,t+1}]_{k})
\\&\quad+(\zeta_{t}-\delta_{t})\bm e^{pqk}_{t}-(1-\zeta_{t})\bm 1_{|\mathcal H|^{-}}[x_{pq,t}]_{k},
\\&=\left((\zeta_{t}-\delta_{t})I_{|\mathcal H|^{-}}+\delta_{t}\bm M_{pq,t}^{1k-}\right)\bm e^{pqk}_{t}
\\&\quad+\bm 1_{|\mathcal H|^{-}}([x_{pq,t}]_{k}-[x_{pq,t+1}]_{k})-(1-\zeta_{t})\bm 1_{|\mathcal H|^{-}}[x_{pq,t}]_{k}.
\end{align*}

Let $v_{t}=\left\|(\zeta_{t}-\delta_{t})I_{|\mathcal H|^{-}}+\delta_{t}\bm M_{pq,t}^{1k-}\right\|$, then $v_{t}<1$ for any $t \in [T]$ due to the properties of $\zeta_{t}$, $\delta_{t}$ and the fact that $\left\|\bm M_{pq,t}^{1k-}\right\|\leq\sqrt{\left\|\bm M_{pq,t}^{1k-}\right\|_{1}\left\|\bm M_{pq,t}^{1k-}\right\|_{\infty}}$ holds. Define $v_{1}=\max \limits_{t\in[T]}v_{t}$, then one obtains
\begin{equation}\label{eq.61}
\begin{aligned}
\left\|\bm e^{pqk}_{t+1}\right\|&\leq v_{1}\left\|\bm e^{pqk}_{t}\right\|+\sqrt{|\mathcal H|}\left\|[x_{pq,t+1}]_{k}-[x_{pq,t}]_{k}\right\|
\\&\quad+(1-\zeta_{t})\sqrt{|\mathcal H|}\left\|[x_{pq,t}]_{k}\right\|.
\end{aligned}
\end{equation}

From the update rule (\ref{eq.3}), one has
\begin{equation}\label{eq.62}
\left\|[x_{pq,t+1}]_{k}-[x_{pq,t}]_{k}\right\|\leq \left\|x_{pq,t+1}-x_{pq,t}\right\|\leq 2\alpha_{t}R.
\end{equation}

Plugging (\ref{eq.62}) into (\ref{eq.61}), one obtains
\begin{equation}\label{eq.63}
\begin{aligned}
\left\|\bm e^{pqk}_{t+1}\right\|&\leq v_{1}\left\|\bm e^{pqk}_{t}\right\|+2\alpha_{t}\sqrt{|\mathcal H|}R+(1-\zeta_{t})\sqrt{|\mathcal H|}R.
\end{aligned}
\end{equation}

Applying telescopic cancellation to (\ref{eq.63}) yields,
\begin{equation}\label{eq.64}
\begin{aligned}
\left\|\bm e^{pqk}_{t+1}\right\|&\leq v_{1}^{t}\left\|\bm e^{pqk}_{1}\right\|+2\sqrt{|\mathcal H|}R\sum_{r=0}^{t-1}v^{r}_{1}\left(\alpha_{t-r}+(1-\zeta_{t-r})\right).
\end{aligned}
\end{equation}

Thus, let $v_{2}=\max \limits_{t\in[T]}\left\|(\zeta_{t}-\delta_{t})I_{|\mathcal H|}+\delta_{t}\bm M_{pq,t}^{2k}\right\|$ and $v=\max \left\{v_{1},v_{2}\right\}$, combining the following relation with (\ref{eq.64}), then (\ref{eq.01}) can be obtained.
\begin{align*}
\left\|\bm x^{pq}_{ij,t}-x_{pq,t}\right\|\leq \sum_{k=1}^{d}\left\|[\bm x^{pq}_{ij,t}]_{k}-[x_{pq,t}]_{k}\right\|\leq\sum_{k=1}^{d}\left\|\bm e^{pqk}_{t}\right\|.
\end{align*}

The proof of (\ref{eq.02}) generally follows the similar lines of (\ref{eq.01}) by transforming $[x_{pq,t}]_{k}$ into $[x_{ph,t}^{*}]_{k}$, which is omitted here due to the space limit.\hfill$\blacksquare$

\subsection{Proof of Lemma \ref{lemma_3}}\label{prof_lemma_3}
Using telescopic cancellation on (\ref{row_form_1}) and denoting $\bm \epsilon_{iq,t}^{k}-\bm \epsilon_{iq,t-1}^{k}$ as $\Delta\bm \epsilon_{iq,t}^{k}$. Then, 
\begin{equation}\label{eq.20}
\begin{aligned}
\bm v_{iq,t}^{k}=\Phi^{k}_{iq,[t-1,1]}\bm v_{iq,1}^{k}+\sum_{g=2}^{t}\Phi^{k}_{iq,[t-1,g]}\Delta\bm \epsilon _{iq,g}^{k},
\end{aligned}
\end{equation}
where $\Phi^{k}_{iq,[t-1,g]}\triangleq\bm Y_{iq,t-1}^{k}\times\cdots\times\bm Y_{iq,g}^{k}$ and $\Phi^{k}_{iq,[t-1,t]}\triangleq I_{d}$. All of them have the following properties, i.e., there exist some constants $C>0$, $\Theta\in (0,1)$ and stochastic vector $p^{k}_{iq,g}$ with all elements upper bounded by $\frac{1}{\min \limits_{q\in\mathcal H_{i}}\left\{|\mathcal N^{i}_{iq}|-2b_i+1\right\}}$ such that 
\begin{equation}\label{eq.21}
\begin{aligned}
\left\|\Phi^{k}_{iq,[t-1,g]}-\bm 1_{|\mathcal H_{i}|}p^{k\top}_{iq,g}\right\|\leq C\Theta^{t-g}.
\end{aligned}
\end{equation}
Please refer to \cite{Nedic_2010} for a more detailed discussion.

By the definition of $\bm v_{iq,t}$, it can be obtained that
\begin{equation}\label{eq.12}
\begin{aligned}
&\sum_{t=2}^{T}\left\|\bm v_{iq,t}-(\bm 1_{|\mathcal H_{i}|}\otimes I_{d})\bar \epsilon_{iq,t}\right\|
\\&\leq \sum_{t=2}^{T}\sum_{k=1}^{d}\left\|\bm v_{iq,t}^{k}-\bm 1_{|\mathcal H_{i}|}[\bar \epsilon_{iq,t}]_{k}\right\|
\\&\leq \sum_{t=2}^{T}\sum_{k=1}^{d}\left\|\bm v_{iq,t}^{k}-\hat{\bm v}_{iq,t}^{k}+\hat{\bm v}_{iq,t}^{k}-\bm 1_{|\mathcal H_{i}|}[\bar \epsilon_{iq,t}]_{k}\right\|
\\&\leq \sum_{t=2}^{T}\sum_{k=1}^{d}\left\|\bm v_{iq,t}^{k}-\hat{\bm v}_{iq,t}^{k}\right\|+\sum_{t=2}^{T}\sum_{k=1}^{d}\left\|\hat{\bm v}_{iq,t}^{k}-\bm 1_{|\mathcal H_{i}|}[\bar \epsilon_{iq,t}]_{k}\right\|,
\end{aligned}
\end{equation}
where $\hat{\bm v}_{iq,t}^{k}=\bm 1_{|\mathcal H_{i}|}\sum_{g=2}^{t}p^{k \top}_{iq,g}\Delta \bm \epsilon^{k}_{iq,g}$.

Thus, 
\begin{equation}\label{eq.13}
\begin{aligned}
&\left\|\bm v_{iq,t}^{k}-\hat{\bm v}_{iq,t}^{k}\right\|
\\&\leq \sum_{g=2}^{t}\left\|\Phi^{k}_{iq,[t-1,g]}-\bm 1_{|\mathcal H_{i}|}p^{k \top}_{iq,g}\right\|\left\|\Delta \bm \epsilon^{k}_{iq,g}\right\|
\\&\leq C\sum_{g=2}^{t}\Theta^{t-g}\left\|\Delta \bm \epsilon^{k}_{iq,g}\right\|,
\end{aligned}
\end{equation}
where the first inequality follows from the fact that $v^{j}_{iq,1}=\epsilon^{j}_{iq,1}=\bm 0_{d}$ for any $j \in \mathcal H_{i}$.

Furthermore, let $v_{\mathcal G}\triangleq\Big( \frac{|\mathcal H_{i}|}{\min \limits_{q\in\mathcal H_{i}}\left\{|\mathcal N^{i}_{iq}|-2b_i+1\right\}}-1\Big)^{\frac{1}{T}}<1$ and $\min \limits_{q\in\mathcal H_{i}}|\mathcal N^{i}_{iq}|>\frac{|\mathcal H_{i}|}{2}+2b_i-1$, it holds that
\begin{equation}\label{eq.16}
\begin{aligned}
&\left\|\hat{\bm v}_{iq,t}^{k}-\bm 1_{|\mathcal H_{i}|}[\bar \epsilon_{iq,t}]_{k}\right\|
\\&\leq \sum_{g=2}^{t}\left\|\bm 1_{|\mathcal H_{i}|}p^{k\top}_{iq,g}-\frac{1}{|\mathcal H_{i}|}\bm 1_{|\mathcal H_{i}|}\bm 1^{\top}_{|\mathcal H_{i}|}\right\|\left\|\Delta \bm \epsilon^{k}_{iq,g}\right\|
\\&\leq\sum_{g=2}^{t} v_{\mathcal G}^{t-g}\left\|\Delta \bm \epsilon^{k}_{iq,g}\right\|,
\end{aligned}
\end{equation}
where the second inequality follows from that $[\bar \epsilon_{ij,1}]_{k}=0$ holds for all $i \in[N]$, $j\in\mathcal H_{i}$, and $k \in [d]$.

Substituting (\ref{eq.13}) and (\ref{eq.16}) into (\ref{eq.12}) and taking the conditional expectation on both sides of (\ref{eq.12}), one has
\begin{equation}\label{eq.17}
\begin{aligned}
&\sum_{t=2}^{T}\mathbf E_{\mathcal U_t}\left\|\bm v_{iq,t}-(\bm 1_{|\mathcal H_{i}|}\otimes I_{d})\bar \epsilon_{iq,t}\right\|
\\&\leq C\sum_{t=2}^{T}\sum_{k=1}^{d}\sum_{g=2}^{t}\Theta^{t-g}\mathbf E_{\mathcal U_t}\left\|\Delta \bm \epsilon^{k}_{iq,g}\right\|
\\&\quad+\sum_{t=2}^{T}\sum_{k=1}^{d}\sum_{g=2}^{t} v_{\mathcal G}^{t-g}\mathbf E_{\mathcal U_t}\left\|\Delta \bm \epsilon^{k}_{iq,g}\right\|
\\&\leq \left(\frac{\sqrt{d}C}{1-\Theta}+\frac{\sqrt{d}}{1-v_{\mathcal G}}\right)\sum_{t=2}^{T}\mathbf E_{\mathcal U_t}\left\|\Delta \bm \epsilon_{iq,t}\right\|,
\end{aligned}
\end{equation}
where $\Delta \bm \epsilon_{iq,t}=\bm \epsilon_{iq,t}-\bm \epsilon_{iq,t-1}$ and $\bm \epsilon_{iq,t}=col(\epsilon^{j}_{iq,t})_{j\in\mathcal H_{i}}$ .

Next, an upper bound of $\sum_{t=2}^{T}\mathbf E_{\mathcal U_t}\left\|\Delta \bm \epsilon_{iq,t}\right\|$ can be constructed by  Lemma \ref{lemma_2} and Assumption \ref{assp.7} as follows.
\begin{equation}\label{eq.1118}
\begin{aligned}
&\sum_{t=2}^{T}\mathbf E_{\mathcal U_t}\left\|\Delta \bm \epsilon_{iq,t}\right\|
\\&\leq \sqrt{2}l_{f}\sum_{t=2}^{T}\sum_{j\in \mathcal H_{i}}\big(\left\|\bm x_{ij,t}-\bm \tau_{ij,t}\right\|+\left\|\bm x_{ij,t-1}-\bm \tau_{ij,t-1}\right\|
\\&\quad+\left\|\bm \tau_{ij,t}-\bm \tau_{ij,t-1}\right\|\big)+\sqrt{2}|\mathcal H|\Delta F^{\text{sup}}_{T}
\\&\leq 3\sqrt{2}t_{0}l_{f}\sum_{t=2}^{T}\sum_{j\in \mathcal H_{i}}\left\|\bm x_{ij,t}-\bm x_{ij,t-1}\right\|+\sqrt{2}|\mathcal H|\Delta F^{\text{sup}}_{T}
\\&\leq  3\sqrt{2}t_{0}l_{f}\sum_{t=2}^{T}\sum_{j\in \mathcal H_{i}}\Big(\left\|\bm x_{ij,t}-x_{\mathcal H,t}^{*}\right\|+\left\|\bm x_{ij,t-1}-x_{\mathcal H,t-1}^{*}\right\|
\\&\quad+\left\|x_{\mathcal H,t}^{*}-x_{\mathcal H,t-1}^{*}\right\|\Big)+\sqrt{2}|\mathcal H|\Delta F^{\text{sup}}_{T}
\\&\leq3\sqrt{2}t_{0}l_{f}\sum_{t=1}^{T}\sum_{j\in \mathcal H_{i}}\big(2\left\|\bm x_{ij,t}-x_{\mathcal H,t}^{*}\right\|+\left\|x_{\mathcal H,t}^{*}-x_{\mathcal H,t-1}^{*}\right\|\big)
\\&\quad+\sqrt{2}|\mathcal H|\Delta F^{\text{sup}}_{T}
\\&\leq6\sqrt{2}t_{0}l_{f}\sum_{t=1}^{T}\sum_{j\in \mathcal H_{i}}\sum_{p=1}^{N}\sum_{q\in\mathcal H_{p}}\sum_{h\in\mathcal B_{p}}\big(\left\|\bm x_{ij,t}^{pq}-x_{pq,t}\right\|
\\&\quad+\left\|\bm x_{ij,t}^{ph}-x_{ph,t}^{*}\right\|+\left\|x_{pq,t}-x_{pq,t-1}\right\|+\left\|x_{ph,t}^{*}-x_{ph,t-1}^{*}\right\|\big)
\\&\quad+\sqrt{2}|\mathcal H|\Delta F^{\text{sup}}_{T}
\\&\leq \mathcal O(1+\sum_{t=1}^{T}\alpha_{t}+\sum_{t=1}^{T}\xi_{t}+\Phi(T)+\Delta F^{\text{sup}}_{T}),
\end{aligned}
\end{equation}
where $x_{\mathcal H,0}^{*}\triangleq x_{\mathcal H,1}^{*}$ and $t_{0}$ is defined as $t_{0}=\max_{t\in \left\{t\in[T]|\bmod(t,s(t))\neq 0\right\}}\left\{\sum_{k=t}^{\min_{t^{*}}\left\{t^{*}>t|\bmod(t^{*},s(t^{*}))=0\right\}}1\right\}$.

Substituting (\ref{eq.1118}) into (\ref{eq.17}), the proof is complete.

\subsection{Proof of Theorem \ref{Theorem_1}}\label{prof_lemma_7}

Let $\Lambda=\max\left\{R,\left\{\left\|\bm x_{ij,1}\right\|\right\}_{i\in[N],j\in [\mathcal H_i]}\right\}$. For any $i,p\in[N]$, $j\in \mathcal H_{i}$, $q\in \mathcal H_{p}$ and $k\in[d]$, if $(i,j)= (p,q)$, it is natural to get $\left\|[\bm x_{ij,t}^{pq}]_{k}\right\|=\left\|[x_{ij,t}]_{k}\right\|\leq \Lambda$ by (\ref{eq.3}). On the other hand, if $(i,j)\neq (p,q)$, then
\begin{align*}
\left\|[\bm x_{ij,2}^{pq}]_{k}\right\|&\leq\delta_{1}\frac{\sum_{(r,h) \in \mathcal Y_{ij,1}^{pqk}}\left\|[\widetilde{\bm x}_{rh,1}^{pq}]_{k}\right\|}{|\mathcal N_{ij}|-2b+1}+(\zeta_{1}-\delta_{1})\left\|[\bm x_{ij,1}^{pq}]_{k}\right\|
\\&\leq \delta_{1}\left\|[\bm M^{1k}_{pq,t}\bm x^{pqk+}_{t}]_{\mathcal S(i,j)}\right\|+(\zeta_{1}-\delta_{1})\left\|[\bm x_{ij,1}^{pq}]_{k}\right\|
\\&\leq \zeta_{1}\Lambda<\Lambda.
\end{align*}

For any $i,p\in[N]$, $j\in \mathcal H_{i}$, $q\in \mathcal B_{p}$ and $k\in[d]$, one has
\begin{align*}
\left\|[\bm x_{ij,2}^{pq}]_{k}\right\|&\leq\delta_{1}\frac{\sum_{(r,h) \in \mathcal Y_{ij,1}^{pqk}}\left\|[\widetilde{\bm x}_{rh,1}^{pq}]_{k}\right\|}{|\mathcal N_{ij}|-2b+1}+(\zeta_{1}-\delta_{1})\left\|[\bm x_{ij,1}^{pq}]_{k}\right\|
\\&\leq \delta_{1}\left\|[\bm M^{2k}_{pq,t}\bm x^{pqk+}_{t}]_{\mathcal S(i,j)}\right\|+(\zeta_{1}-\delta_{1})\left\|[\bm x_{ij,1}^{pq}]_{k}\right\|
\\&\leq \zeta_{1}\Lambda<\Lambda.
\end{align*}

One can deduce the rest from this and $0<\zeta_{t}<1$, then $\left\|[\bm x_{ij,t}^{pq}]_{k}\right\|<\Lambda$. Overall, $\left\|\bm x_{ij,t}\right\|< nd\Lambda$ for any $t\in[T]$, $i\in[N]$, and $j\in\mathcal H_{i}$.

From (\ref{eq.1}) and (\ref{eq.2}), one has
\begin{align*}
\left\|\lambda_{ij,t+1}\right\|&\leq (1-\beta_{t}\eta_{t})\left\|\lambda_{ij,t}\right\|+\eta_{t}\left\|y_{ij,t}\right\|.
\end{align*}

By $\left\|\lambda_{ij,1}\right\|=0$,
\begin{equation}\label{eq.0000}
\begin{aligned}
\left\|\lambda_{ij,t+1}\right\|&\leq \sum_{v=0}^{t-1}(1-\beta_{v}\eta_{v})_{[t:t-v+1]}\eta_{t-v}\left\|y_{ij,t-v}\right\|,
\end{aligned}
\end{equation}
where $(1-\beta_{v}\eta_{v})_{[t:t-v+1]}=(1-\beta_{t}\eta_{t})\times \cdots \times(1-\beta_{t-v+1}\eta_{t-v+1})$ and $(1-\beta_{v}\eta_{v})_{[t:t+1]}=1$.

Summing (\ref{eq.0000}) over $t=1,\cdots,T-1$ and taking expectation in $\mathcal U_{t}$, one gets
\begin{equation}\label{eq.04}
\begin{aligned}
\sum_{t=1}^{T}\left\|\lambda_{ij,t}\right\|&\leq \eta_{T}T\sum_{t=1}^{T}\mathbf E_{\mathcal U_t}\left\|y_{ij,t}\right\|
\\&\leq \mathcal O(\eta_{T}T^{2}),
\end{aligned}
\end{equation}
where the first inequality holds since $\lambda_{ij,t}$ is independent of $\mathcal U_t$ and the last inequality results from Assumptions \ref{assp.111} and \ref{assp.7}, and the fact that
\begin{equation}\label{eq.1122}
\begin{aligned}
\mathbf E_{\mathcal U_t}\left\|y_{ij,t}\right\|&\leq \mathbf E_{\mathcal U_t}\left\|g_{t}(\bm x_{ij,t},\omega_{t})-g_{t}(\bm \tau_{ij,t},\omega_{t})+G_{t}(\bm \tau_{ij,t})\right\|
\\&\leq l_{g_{1}}\left\|\bm x_{ij,t}-\bm\tau_{ij,t}\right\|+\left\|G_{t}(\bm \tau_{ij,t})\right\|
\\&\leq 2l_{g_{1}}nd\Lambda+B_{1}.
\end{aligned}
\end{equation}

From the update rule (\ref{eq.2}), for any $\lambda \in \mathbb R^{m}_{\geq 0}$, one obtains
\begin{equation}\label{eq.71}
\begin{aligned}
&\left\|\lambda_{ij,t+1}-\lambda\right\|^{2}
\\&=\left\|[\lambda_{ij,t}+\eta_{t}(y_{ij,t}-\beta_{t}\lambda_{ij,t})]_{+}-\lambda\right\|^{2}
\\&\leq \left\|\lambda_{ij,t}-\lambda\right\|^{2}+2\left<\lambda_{ij,t},\eta_{t}(y_{ij,t}-\beta_{t}\lambda_{ij,t})\right>
\\&\quad-2\left<\lambda,\eta_{t}(y_{ij,t}-\beta_{t}\lambda_{ij,t})\right>+\eta_{t}^{2}\left\|y_{ij,t}-\beta_{t}\lambda_{ij,t}\right\|^{2}
\\&\leq\left\|\lambda_{ij,t}-\lambda\right\|^{2}+2\eta_{t}\left\|\lambda_{ij,t}\right\|(\left\|y_{ij,t}\right\|
-\beta_{t}\left\|\lambda_{ij,t}\right\|)
\\&\quad-2\eta_{t}\left<\lambda,y_{ij,t}\right>+\eta_{t}\beta_{t}\left\|\lambda\right\|^{2}+\eta_{t}\beta_{t}\left\|\lambda_{ij,t}\right\|^{2}
\\&\quad+2\beta_{t}^{2}\eta_{t}^{2}\left\|\lambda_{ij,t}\right\|^{2}+2\eta_{t}^{2}\left\|y_{ij,t}\right\|^{2}
\\&\leq\left\|\lambda_{ij,t}-\lambda\right\|^{2}+2\eta_{t}\left\|\lambda_{ij,t}\right\|\left\|y_{ij,t}\right\|
-2\eta_{t}\left<\lambda,y_{ij,t}\right>
\\&\quad+\eta_{t}\beta_{t}\left\|\lambda\right\|^{2}+2\eta_{t}^{2}\left\|y_{ij,t}\right\|^{2},
\end{aligned}
\end{equation}
where the first inequality results from the nonexpansive property of the operator $[\cdot]_{+}$, and the last inequality results from dropping the negative term and $0<\beta_{t}\eta_{t}<\frac{1}{2}$.

Regrouping the terms on (\ref{eq.71}), one has
\begin{equation}\label{eq.72}
\begin{aligned}
&2\eta_{t}\left<\lambda,y_{ij,t}\right>-\eta_{t}\beta_{t}\left\|\lambda\right\|^{2}
\\&\leq\left\|\lambda_{ij,t}-\lambda\right\|^{2}-\left\|\lambda_{ij,t+1}-\lambda\right\|^{2}+2\eta_{t}\left\|\lambda_{ij,t}\right\|\left\|y_{ij,t}\right\|
\\&\quad+2\eta_{t}^{2}\left\|y_{ij,t}\right\|^{2}.
\end{aligned}
\end{equation}

Furthermore, by Cauchy-Schwarz inequality, one obtains
\begin{equation}\label{eq.73}
\begin{aligned}
&2\eta_{t}\left<\lambda,y_{ij,t}-G_{t}(x^{*}_{\mathcal H,t})\right>\geq -2\eta_{t}\left\|\lambda\right\|\left\|y_{ij,t}-G_{t}(x^{*}_{\mathcal H,t})\right\|.
\end{aligned}
\end{equation}

Combining (\ref{eq.73}) with (\ref{eq.72}), one reaches
\begin{equation}\label{eq.74}
\begin{aligned}
&2\eta_{t}\left<\lambda,G_{t}(x^{*}_{\mathcal H,t})\right>-\eta_{t}\beta_{t}\left\|\lambda\right\|^{2}\quad\quad\quad\quad\quad\quad\quad\quad\quad\quad\quad\quad
\\&\leq\left\|\lambda_{ij,t}-\lambda\right\|^{2}-\left\|\lambda_{ij,t+1}-\lambda\right\|^{2}+2\eta_{t}\left\|\lambda_{ij,t}\right\|\left\|y_{ij,t}\right\|
\\&\quad+2\eta_{t}^{2}\left\|y_{ij,t}\right\|^{2}+2\eta_{t}\left\|\lambda\right\|\left\|y_{ij,t}-G_{t}(x^{*}_{\mathcal H,t})\right\|.
\end{aligned}
\end{equation}

Submitting $\lambda=\frac{[G_{t}(x^{*}_{\mathcal H,t})]_{+}}{\beta_{t}}$ into (\ref{eq.74}) yields
\begin{equation}\label{eq.75}
\begin{aligned}
&\left\|[G_{t}(x^{*}_{\mathcal H,t})]_{+}\right\|^{2}
\\&\leq\frac{\beta_{t}}{\eta_{t}}\left(\left\|\lambda_{ij,t}-\lambda\right\|^{2}-\left\|\lambda_{ij,t+1}-\lambda\right\|^{2}\right)+2\beta_{t}\left\|\lambda_{ij,t}\right\|\left\|y_{ij,t}\right\|
\\&\quad+2\eta_{t}\beta_{t}\left\|y_{ij,t}\right\|^{2}+2\left\|[G_{t}(x^{*}_{\mathcal H,t})]_{+}\right\|\left\|y_{ij,t}-G_{t}(x^{*}_{\mathcal H,t})\right\|.
\end{aligned}
\end{equation}

Summing (\ref{eq.75}) over $i=1,\cdots,N$ and taking expectation in $\mathcal U_{t}$ on both sides of (\ref{eq.75}) lead to
\begin{equation}\label{eq.76}
\begin{aligned}
&\sum_{t=1}^{T}\mathbf{E}_{\mathcal U_t}\left\|[G_{t}(x^{*}_{\mathcal H,t})]_{+}\right\|^{2}
\\&\leq2(2l_{g_{1}}nd\Lambda+B_{1})\beta_{T}\sum_{t=1}^{T}\left\|\lambda_{ij,t}\right\|\\&\quad+18B_1^{2}\beta_{T}\eta_{T}T
+2B_{1}\sum_{t=1}^{T}\mathbf E_{\mathcal U_t}\big\|y_{ij,t}
-G_{t}(x^{*}_{\mathcal H,t})\big\|,
\end{aligned}
\end{equation}
where  the last inequality follows from that $\lambda_{ij,1}=\bm 0_{m}$, $G_{1}(x^{*}_{\mathcal H,1})\leq \bm 0_{m}$, the property of $\beta_{t}$ and $\eta_{t}$, (\ref{eq.1122}), and Assumption \ref{assp.111}.

Next, by the update rule (\ref{eq.1}), one has
\begin{equation}\label{eq.780}
\begin{aligned}
&\sum_{t=1}^{T}\mathbf E_{\mathcal U_t}\left\|y_{ij,t}-G_{t}(x^{*}_{\mathcal H,t})\right\|
\\&\leq 2\sum_{t=1}^{T}\mathbf E_{\mathcal U_t}\left\|g_{t}(\bm x_{ij,t},\omega_{t})-g_{t}(\bm \tau_{ij,t},\omega_{t})+G_{t}(\bm \tau_{ij,t})\right.
\\&\quad\left.-G_{t}(\bm x_{ij,t})\right\|+2\sum_{t=1}^{T}\mathbf E_{\mathcal U_t}\left\|G_{t}(\bm x_{ij,t})-G_{t}(x_{\mathcal H,t}^{*})\right\|
\\&\leq 4l_{g_{1}}\sum_{t=1}^{T}\left\|\bm x_{ij,t}-\bm \tau_{ij,t}\right\|+2l_{g_{1}}\sum_{t=1}^{T}\left\|\bm x_{ij,t}- x_{\mathcal H,t}^{*}\right\|
\\&\leq \mathcal O(1+\sum_{t=1}^{T}\alpha_{t}+\sum_{t=1}^{T}\xi_{t}+\Phi(T)),\quad\quad\quad\quad\quad\quad\quad\quad\quad\quad\quad\quad\quad\quad\quad
\end{aligned}
\end{equation}
where the second inequality results from Assumption \ref{assp.7}, and the last inequality follows from a similar procedure in (\ref{eq.1118}).

Substituting (\ref{eq.780}) into (\ref{eq.76}) and taking full expectation, by (\ref{eq.04}) and $\sum_{t=1}^{T}\mathbf{E}\left\|a_{t}\right\|\leq\sqrt{T\sum_{t=1}^{T}\mathbf{E}\left\|a_{t}\right\|^{2}}$, the result is obtained. \hfill$\blacksquare$

\subsection{Proof of Theorem \ref{Theorem_2}}\label{prof_lemma_8}

Let $\nabla_{x_{ij,t}}F_{i,t}(\cdot)\triangleq \nabla F_{i,t}(\cdot)$, $\nabla_{x_{ij,t}}G_{t}(\cdot)\triangleq \nabla G_{t}(\cdot)$, and $\sum_{t,i,j,k}(\cdot)\triangleq \sum_{t=1}^{T}\sum_{i=1}^{N}\sum_{j\in\mathcal H_{i}}\sum_{k=1}^{d}(\cdot)$. By the boundedness of $x_{ij,t}$ and $x^{*}_{ij,t}$, one has
\begin{equation}\label{eq.81}
\begin{aligned}
&\mathbf E_{\mathcal U_t}\left\|x_{ij,t+1}-x^{*}_{ij,t+1}\right\|^{2}
\\&=\mathbf E_{\mathcal U_t}\left\|x_{ij,t+1}-x^{*}_{ij,t}+x^{*}_{ij,t}-x^{*}_{ij,t+1}\right\|^{2}
\\&=\mathbf E_{\mathcal U_t}\left\|x_{ij,t+1}-x^{*}_{ij,t}\right\|^{2}+\left\|x^{*}_{ij,t}-x^{*}_{ij,t+1}\right\|^{2}
\\&\quad+2\mathbf E_{\mathcal U_t}\left<x_{ij,t+1}-x^{*}_{ij,t},x^{*}_{ij,t}-x^{*}_{ij,t+1}\right>
\\&\leq \mathbf E_{\mathcal U_t}\left\|x_{ij,t+1}-x^{*}_{ij,t}\right\|^{2}+4R\left\|x^{*}_{ij,t}-x^{*}_{ij,t+1}\right\|.
\end{aligned}
\end{equation}

For the first term on the right hand-side of (\ref{eq.81}), one obtains
\begin{equation}\label{eq.82}
\mathbf E_{\mathcal U_t}\left\|x_{ij,t+1}-x^{*}_{ij,t}\right\|^{2}=\sum_{k=1}^{d}\mathbf E_{\mathcal U_t}\left\|[x_{ij,t+1}]_{k}-[x^{*}_{ij,t}]_{k}\right\|^{2}.
\end{equation}

Furthermore, an upper bound of $\mathbf E_{\mathcal U_t}\left\|[x_{ij,t+1}]_{k}-[x^{*}_{ij,t}]_{k}\right\|^{2}$ can be constructed as
\begin{equation}\label{eq.83}
\begin{aligned}
&\mathbf E_{\mathcal U_t}\Big\|[x_{ij,t+1}]_{k}-[x^{*}_{ij,t}]_{k}\Big\|^{2}
\\&\overset{(a)}{\leq} \mathbf E_{\mathcal U_t}\bigg\|(1-\alpha_{t})([x_{ij,t}]_{k}-[x^{*}_{ij,t}]_{k})
\\&\quad+\alpha_{t}P_{\Omega_{ij}^{k}}\Big([x_{ij,t}]_{k}-\gamma_{t}[v^{j}_{ij,t}]_{k}-\gamma_{t}[\nabla_{x_{ij,t}}y_{ij,t}^\top]_{(k,:)}\lambda_{ij,t+1}\Big)
\\&\quad-\alpha_{t}P_{\Omega_{ij}^{k}}\Big([x_{ij,t}^{*}]_{k}-\gamma_{t}[\nabla F_{i,t}(x^{*}_{t})]_{k}-\gamma_{t}[\nabla G_{t}(x^{*}_{t})^\top]_{(k,:)}\lambda_{t}^{*}\Big)\bigg\|^{2}
\\&\overset{(b)}{\leq} (1-\alpha_{t})\Big\|[x_{ij,t}]_{k}-[x^{*}_{ij,t}]_{k}\Big\|^{2}+\alpha_{t}\mathbf E_{\mathcal U_t}\bigg\|[x_{ij,t}]_{k}-[x^{*}_{ij,t}]_{k}
\\&\quad+\gamma_{t}[\nabla G_{t}(x^{*}_{t})^\top]_{(k,:)}\lambda_{t}^{*}-\gamma_{t}\Big([\nabla F_{i,t}(x_{\mathcal H,t}^{*})]_{k}-[\nabla F_{i,t}(x^{*}_{t})]_{k}\Big)
\\&\quad-\gamma_{t}\Big([v^{j}_{ij,t}]_{k}-[\nabla F_{i,t}(x_{\mathcal H,t}^{*})]_{k}\Big)-\gamma_{t}[\nabla_{x_{ij,t}}y_{ij,t}^\top]_{(k,:)}\lambda_{ij,t+1})\bigg\|^{2}
\\&\leq\Big\|[x_{ij,t}]_{k}-[x^{*}_{ij,t}]_{k}\Big\|^{2}
\\&\quad+2\alpha_{t}\gamma_{t}\Big<[x_{ij,t}]_{k}-[x^{*}_{ij,t}]_{k},[\nabla G_{t}(x^{*}_{t})^\top]_{(k,:)}\lambda_{t}^{*}\Big>
\\&\quad-2\alpha_{t}\gamma_{t}\Big<[x_{ij,t}]_{k}-[x^{*}_{ij,t}]_{k},[\nabla F_{i,t}(x_{\mathcal H,t}^{*})]_{k}-[\nabla F_{i,t}(x^{*}_{t})]_{k}\Big>
\\&\quad-2\alpha_{t}\gamma_{t}\mathbf E_{\mathcal U_t}\Big<[x_{ij,t}]_{k}-[x^{*}_{ij,t}]_{k},[v^{j}_{ij,t}]_{k}-[\nabla F_{i,t}(x_{\mathcal H,t}^{*})]_{k}\Big>
\\&\quad-2\alpha_{t}\gamma_{t}\mathbf E_{\mathcal U_t}\Big<[x_{ij,t}]_{k}-[x^{*}_{ij,t}]_{k},[(\nabla_{x_{ij,t}}y_{ij,t})^\top]_{(k,:)}\lambda_{ij,t+1})\Big>
\\&\quad+2\alpha_{t}\gamma_{t}^{2}\mathbf E_{\mathcal U_t}\Big\|[\nabla G_{t}(x^{*}_{t})^\top]_{(k,:)}\lambda_{t}^{*}-[\nabla_{x_{ij,t}}y_{ij,t}^\top]_{(k,:)}\lambda_{ij,t+1}\Big\|^{2}
\\&\quad+4\alpha_{t}\gamma_{t}^{2}\Big\|[\nabla F_{i,t}(x^{*}_{t})]_{k}\Big\|^{2}+4\alpha_{t}\gamma_{t}^{2}\mathbf E_{\mathcal U_t}\left\|[v^{j}_{ij,t}]_{k}\right\|^{2}
\\&\overset{(c)}{\leq}\Big\|[x_{ij,t}]_{k}-[x^{*}_{ij,t}]_{k}\Big\|^{2}
\\&\quad+2\alpha_{t}\gamma_{t}\Big<[x_{ij,t}]_{k}-[x^{*}_{ij,t}]_{k},[\nabla G_{t}(x^{*}_{t})^\top]_{(k,:)}\lambda_{t}^{*}\Big>
\\&\quad-2\alpha_{t}\gamma_{t}\Big<[x_{ij,t}]_{k}-[x^{*}_{ij,t}]_{k},[\nabla F_{i,t}(x_{\mathcal H,t}^{*})]_{k}-[\nabla F_{i,t}(x^{*}_{t})]_{k}\Big>
\\&\quad+4\alpha_{t}\gamma_{t}R\mathbf E_{\mathcal U_t}\Big\|[v^{j}_{ij,t}]_{k}-[\nabla F_{i,t}(x_{\mathcal H,t}^{*})]_{k}\Big\|
\\&\quad+4\alpha_{t}\gamma_{t}^{2}\mathbf E_{\mathcal U_t}\left\|[v^{j}_{ij,t}]_{k}\right\|^{2}+(4\vartheta^{2}B_{2}^{2}+4B_{4}^{2})\alpha_{t}\gamma_{t}^{2}
\\&\quad+4\alpha_{t}\gamma_{t}R\mathbf E_{\mathcal U_t}\Big(\left\|[\nabla_{x_{ij,t}}y_{ij,t}^\top]_{(k,:)}\right\|\left\|\lambda_{ij,t+1}\right\|\Big)
\\&\quad+4\alpha_{t}\gamma_{t}^{2}\mathbf E_{\mathcal U_t}\Big(\left\|[\nabla_{x_{ij,t}}y_{ij,t}^\top]_{(k,:)}\right\|^{2}\left\|\lambda_{ij,t+1}\right\|^{2}\Big),\quad\quad\quad\quad\quad\quad
\end{aligned}
\end{equation}
where $\Omega_{ij}^{k}$ represents the $k$-th coordinate of $\Omega_{ij}$, $(a)$ results from (\ref{eq.3}) and the optimal condition (\ref{re_3}); $(b)$ follows from Jensen's inequality and the nonexpansive property of the operator $P_{\Omega_{ij}^{k}}(\cdot)$; and $(c)$ results from Assumption \ref{assp.111}, and the fact that there exists a constant $\vartheta$, s.t., $\left\|\lambda_{t}^{*}\right\|\leq \vartheta$ for any $t\in[T]$ due to Lemma 1 in \cite{Nedic_2009}.

Substituting (\ref{eq.83}) into (\ref{eq.82}) and (\ref{eq.81}), rearranging the terms, dividing both sides by $\frac{1}{2\alpha_{t}\gamma_{t}}$ and summing it over $i\in[N]$, $j\in\mathcal H_{i}$ and $t\in[T]$, then one has
\begin{equation}\label{eq.84}
\begin{aligned}
&\underbrace{\sum_{t,i,j,k}\big<[x_{ij,t}]_{k}-[x^{*}_{ij,t}]_{k},[\nabla F_{i,t}(x_{\mathcal H,t}^{*})]_{k}-[\nabla F_{i,t}(x^{*}_{t})]_{k}\big>}_{=\colon S_{1}}
\\&\leq\underbrace{ \frac{1}{2}\sum_{t,i,j}\frac{1}{\alpha_{t}\gamma_{t}}\left(\left\|x_{ij,t}-x^{*}_{ij,t}\right\|^{2}-\mathbf E_{\mathcal U_t}\left\|x_{ij,t+1}-x^{*}_{ij,t+1}\right\|^{2}\right)}_{=\colon S_{2}}
\\&\quad+\underbrace{\sum_{t,i,j,k}\big<[x_{ij,t}]_{k}-[x^{*}_{ij,t}]_{k},[\nabla G_{t}(x_{t}^{*})^\top]_{(k,:)}\lambda_{t}^{*}\big>}_{=\colon S_{3}}
\\&\quad+\underbrace{2R\sum_{t,i,j,k}\mathbf E_{\mathcal U_t}\left\|[v^{j}_{ij,t}]_{k}-[\nabla F_{i,t}(x_{\mathcal H,t}^{*})]_{k}\right\|}_{=\colon S_{4}}+\frac{2R\Phi_{\mathcal H}(T)}{\alpha_{t_{1}}\gamma_{t_{1}}}
\\&\quad+\underbrace{2\sum_{t,i,j,k}\gamma_{t}\mathbf E_{\mathcal U_t}\left\|[v^{j}_{ij,t}]_{k}\right\|^{2}}_{=\colon S_{5}}+(2d\vartheta^{2}B_{2}^{2}+2dB_{4}^{2})|\mathcal H|\sum_{t=1}^{T}\gamma_{t}
\\&\quad+\underbrace{2dC_{5}R\sum_{t,i,j}\mathbf E_{\mathcal U_t}\left\|\lambda_{ij,t+1}\right\|+6dB_{2}^{2}\sum_{t,i,j}\gamma_{t}\mathbf E_{\mathcal U_t}\left\|\lambda_{ij,t+1}\right\|^{2}}_{=\colon S_{6}}.
\end{aligned}
\end{equation}

For term $S_{1}$, under Assumption \ref{assp.8}, one obtains
\begin{equation}\label{eq.85}
\begin{aligned}
S_{1}&=\sum_{t,i,j,k}\big<[x_{ij,t}]_{k}-[x^{*}_{ij,t}]_{k},[\nabla F_{i,t}(x_{\mathcal H,t}^{*})]_{k}-[\nabla F_{i,t}(x^{*}_{t})]_{k}\big>
\\&\geq \sigma\sum_{t=1}^{T}\sum_{i=1}^{N}\sum_{j\in\mathcal H_{i}}\left\|x_{ij,t}-x_{ij,t}^{*}\right\|^{2}.\quad\quad\quad\quad\quad\quad\quad\quad
\end{aligned}
\end{equation}

Taking expectation with respect to $\mathscr U_{T}$ on term $S_{2}$ and due to the properties of $\left\{\alpha_{t}\right\}_{t=1}^{T}$ and $\left\{\gamma_{t}\right\}_{t=1}^{T}$, one has
\begin{align*}
&\frac{1}{2}\sum_{t=1}^{T}\frac{1}{\alpha_{t}\gamma_{t}}\left(\mathbf E\left\|x_{ij,t}-x^{*}_{ij,t}\right\|^{2}-\mathbf E\left\|x_{ij,t+1}-x^{*}_{ij,t+1}\right\|^{2}\right)
\\&\overset{(a)}{\leq} \frac{1}{2}\sum_{t=1}^{t_{1}}\frac{1}{\alpha_{t_{1}}\gamma_{t_{1}}}\mathbf E\left(\left\|x_{ij,t}-x^{*}_{ij,t}\right\|^{2}-\left\|x_{ij,t+1}-x^{*}_{ij,t+1}\right\|^{2}\right)
\\&\quad +\frac{1}{2}\sum_{t=t_{1}+1}^{T}\frac{1}{\alpha_{t}\gamma_{t}}\mathbf E\left(\left\|x_{ij,t}-x^{*}_{ij,t}\right\|^{2}-\left\|x_{ij,t+1}-x^{*}_{ij,t+1}\right\|^{2}\right)
\\&\overset{(b)}{\leq} \frac{1}{2\alpha_{t_{1}}\gamma_{t_{1}}}\mathbf E\left\|x_{ij,1}-x_{ij,1}^{*}\right\|^{2}
\\&\quad+\left(\frac{1}{2\alpha_{t_{1}+1}\gamma_{t_{1}+1}}-\frac{1}{2\alpha_{t_{1}}\gamma_{t_{1}}}\right)\mathbf E\left\|x_{ij,t_{1}+1}-x_{ij,t_{1}+1}^{*}\right\|^{2}
\\&\overset{(c)}{\leq} 2R^{2}\alpha_{t_{1}}^{-1}\gamma_{t_{1}}^{-1},
\end{align*}
where $(a)$ results from that $\alpha_{t_{1}}\gamma_{t_{1}}\leq\alpha_{t}\gamma_{t}$ for $\forall t \in [1,t_{1}]$, $(b)$ follows from $\alpha_{t}$, $\gamma_{t}$ are time-invariant for $\forall t>t_{1}$ and dropping the negative term, and $(c)$ results from the boundedness of $x_{ij,1}$ and $x_{ij,1}^{*}$ and the fact that $\frac{1}{\alpha_{t_{1}+1}\gamma_{t_{1}+1}}-\frac{1}{\alpha_{t_{1}}\gamma_{t_{1}}}\leq0$.

Therefore, for term $S_{2}$, one obtains
\begin{equation}\label{eq.86}
\begin{aligned}
&\mathbf E_{\mathscr{U}_T}(S_{2})
\\&=\frac{1}{2}\sum_{t,i,j}\frac{1}{\alpha_{t}\gamma_{t}}\left(\mathbf E\left\|x_{ij,t}-x^{*}_{ij,t}\right\|^{2}-\mathbf E\left\|x_{ij,t+1}-x^{*}_{ij,t+1}\right\|^{2}\right)
\\&\leq 2|\mathcal H|R^{2}\alpha_{t_{1}}^{-1}\gamma_{t_{1}}^{-1}.\quad\quad\quad\quad\quad\quad\quad\quad\quad\quad\quad\quad\quad\quad\quad\quad
\end{aligned}
\end{equation}

For term $S_{3}$, one has
\begin{equation}\label{eq.87}
\begin{aligned}
S_{3}=&\sum_{t,i,j,k}\big<[x_{ij,t}]_{k}-[x^{*}_{ij,t}]_{k},[\nabla G_{t}(x_{t}^{*})^\top]_{(k,:)}\lambda_{t}^{*}\big>
\\&\overset{(a)}{\leq}\sum_{t=1}^{T}\lambda_{t}^{*\top}\left(G_{t}(x^{*}_{\mathcal H,t})-G_{t}(x_{t}^{*})\right)
\\&\overset{(b)}{\leq}\sum_{t=1}^{T}\lambda_{t}^{*\top}[G_{t}(x^{*}_{\mathcal H,t})]_{+}
\\&\overset{(c)}{\leq}\vartheta\mathcal{CV}_{\mathcal H}(T),
\end{aligned}
\end{equation}
where $(a)$ results from the convexity of $G_{t}(x_{t})$, $(b)$ follows from the complementary slackness condition $\lambda_{t}^{*\top}G_{t}(x_{t}^{*})=0$ originating from (\ref{re_2}) and the nonnegativity of $[G_{t}(x^{*}_{\mathcal H,t})]_{+}$, and $(c)$ results from the boundedness of $\left\|\lambda_{t}^{*}\right\|$.

For term $S_{4}$, one obtains
\begin{equation}\label{eq.88}
\begin{aligned}
S_{4}&=2R\sum_{t,i,j,k}\mathbf E_{\mathcal U_t}\left\|[v^{j}_{ij,t}]_{k}-[\nabla F_{i,t}(x_{\mathcal H,t}^{*})]_{k}\right\|
\\&\leq2R\sum_{t,i,j,k}\mathbf E_{\mathcal U_t}\Big\|[v^{j}_{ij,t}]_{k}-[\bar\epsilon_{ij,t}]_{k}\Big\|
\\&\quad +2R\sum_{t,i,j,k}\mathbf E_{\mathcal U_t}\Big\|[\bar \epsilon_{ij,t}]_{k}-[\nabla F_{i,t}(x_{\mathcal H,t}^{*})]_{k}\Big\|.
\end{aligned}
\end{equation}

For the first term of (\ref{eq.88}), by Lemma \ref{lemma_3}, one has
\begin{equation}\label{eq.89}
\begin{aligned}
&2R\sum_{t,i,j,k}\mathbf E_{\mathcal U_t}\Big\|[v^{j}_{ij,t}]_{k}-[\bar \epsilon_{ij,t}]_{k}\Big\|
\\&\leq \mathcal O(1+\sum_{t=1}^{T}\alpha_{t}+\sum_{t=1}^{T}\xi_{t}+\Phi(T)+\Delta F^{\text{sup}}_{T}).
\end{aligned}
\end{equation}

For simplification, some abbreviations are made as follows.
\begin{align*}
&\nabla\widetilde F_{i,t}(\bm x_{\mathcal H_{i},t})\triangleq\frac{1}{|\mathcal H_{i}|}\sum_{q\in\mathcal H_{i}}\nabla_{x_{ij,t}}f^{i,t}_{q}(\bm x_{iq,t},\theta_{i,t}), 
\\&\nabla\widetilde F_{i,t}(\bm \tau_{\mathcal H_{i},t})\triangleq\frac{1}{|\mathcal H_{i}|}\sum_{q\in\mathcal H_{i}}\nabla_{x_{ij,t}}f^{i,t}_{q}(\bm \tau_{iq,t},\theta_{i,t}),
\\&\nabla\widetilde F_{i,t}(x_{\mathcal H,t}^{*})\triangleq\frac{1}{|\mathcal H_{i}|}\sum_{q\in\mathcal H_{i}}\nabla_{x_{ij,t}}f^{i,t}_{q}(x_{\mathcal H,t}^{*},\theta_{i,t}),
\\&\nabla F_{i,t}(\bm \tau_{\mathcal H_{i},t})\triangleq\frac{1}{|\mathcal H_{i}|}\sum_{q\in\mathcal H_{i}}\nabla_{x_{ij,t}}\mathbf E_{\theta_{i,t}}\left[f^{i,t}_{q}(\bm \tau_{iq,t},\theta_{i,t})\right]. 
\end{align*}

Following these notations, one obtains
\begin{align*}
&\mathbf E_{\mathcal U_t}\Big\|\bar \epsilon_{ij,t}-\nabla F_{i,t}(x_{\mathcal H,t}^{*})\Big\|^{2}
\\&\overset{(a)}{=}\mathbf E_{\mathcal U_t}\Big\|\nabla\widetilde F_{i,t}(\bm x_{\mathcal H_{i},t})-\nabla\widetilde F_{i,t}(x_{\mathcal H,t}^{*})+\nabla\widetilde F_{i,t}(x_{\mathcal H,t}^{*})
\\&\quad\quad~-\nabla\widetilde F_{i,t}(\bm \tau_{\mathcal H_{i},t})+\nabla F_{i,t}(\bm \tau_{\mathcal H_{i},t})-\nabla F_{i,t}(x_{\mathcal H,t}^{*})\Big\|^{2}
\\&\leq 2\mathbf E_{\mathcal U_t}\Big\|\nabla\widetilde F_{i,t}(\bm x_{\mathcal H_{i},t})-\nabla\widetilde F_{i,t}(x_{\mathcal H,t}^{*})\Big\|^{2}+2\mathbf E_{\mathcal U_t}\Big\|\nabla\widetilde F_{i,t}(x_{\mathcal H,t}^{*})
\\&\quad\quad~-\nabla\widetilde F_{i,t}(\bm \tau_{\mathcal H_{i},t})+\nabla F_{i,t}(\bm \tau_{\mathcal H_{i},t})-\nabla F_{i,t}(x_{\mathcal H,t}^{*})\Big\|^{2}
\\&\overset{(b)}{\leq} \frac{2l_{f}^{2}}{|\mathcal H_{i}|}\sum_{j \in \mathcal H_{i}}\left\|\bm x_{ij,t}^{\mathcal H}-x_{t}^{\mathcal H}\right\|^{2}+\frac{2l_{f}^{2}}{|\mathcal H_{i}|}\sum_{j \in \mathcal H_{i}}\left\|\bm x_{ij,t}^{\mathcal B}-x_{t}^{\mathcal B*}\right\|^{2}
\\&\quad+2\mathbf E_{\mathcal U_t}\Big\|\nabla\widetilde F_{i,t}(x_{\mathcal H,t}^{*})-\nabla\widetilde F_{i,t}(\bm\tau_{\mathcal H_{i},t})\Big\|^{2}
\\&\quad-2\Big\|\nabla F_{i,t}(\bm \tau_{\mathcal H_{i},t})-\nabla F_{i,t}(x_{\mathcal H,t}^{*})\Big\|^{2}
\\&\overset{(c)}{\leq}\frac{2l_{f}^{2}}{|\mathcal H_{i}|}\sum_{j \in \mathcal H_{i}}\left\|\bm x_{ij,t}^{\mathcal H}-x_{t}^{\mathcal H}\right\|^{2}+\frac{2l_{f}^{2}}{|\mathcal H_{i}|}\sum_{j \in \mathcal H_{i}}\left\|\bm x_{ij,t}^{\mathcal B}-x_{t}^{\mathcal B*}\right\|^{2}
\\&\quad +\frac{2l_{f}^{2}}{|\mathcal H_{i}|}\sum_{j \in \mathcal H_{i}}\left\|\bm \tau_{ij,t}^{\mathcal H}-x_{t}^{\mathcal H}\right\|^{2}+\frac{2l_{f}^{2}}{|\mathcal H_{i}|}\sum_{j \in \mathcal H_{i}}\left\|\bm \tau_{ij,t}^{\mathcal B}-x_{t}^{\mathcal B*}\right\|^{2},
\end{align*}
with $x_{t}^{\mathcal H}=col(x_{ij,t})_{i\in[N],j\in\mathcal H_{i}}$, $x_{t}^{\mathcal B*}=col(x_{ij,t}^{*})_{i\in[N],j\in\mathcal B_{i}}$, $\bm x_{ij,t}^{\mathcal H}=col(\bm x_{ij,t}^{pq})_{p\in[N],q\in\mathcal H_{p}}$, $\bm x_{ij,t}^{\mathcal B}=col(\bm x_{ij,t}^{pq})_{p\in[N],q\in\mathcal B_{p}}$, $\bm \tau_{ij,t}^{\mathcal H}=col(\bm \tau_{ij,t}^{pq})_{p\in[N],q\in\mathcal H_{p}}$, $\bm \tau_{ij,t}^{\mathcal B}=col(\bm \tau_{ij,t}^{pq})_{p\in[N],q\in\mathcal B_{p}}$, where $(a)$ results from the definition of $\bar \epsilon_{ij,t}$, $(b)$ follows from Assumption \ref{assp.7} and because $\tau_{iq,t}$ is independent with $\mathcal U_t$, one can derive from Assumption \ref{assp.11} that $\mathbf E_{\mathcal U_t}[\nabla_{x_{ij,t}}f^{i,t}_{q}(x^{*}_{\mathcal H,t},\theta_{i,t})]=\nabla_{x_{ij,t}}\mathbf E_{\theta_{i,t}}[f^{i,t}_{q}(x^{*}_{\mathcal H,t},\theta_{i,t})]$ and $\mathbf E_{\mathcal U_t}[\nabla_{x_{ij,t}}f^{i,t}_{q}(\bm \tau_{iq,t},\theta_{i,t})]=\nabla_{x_{ij,t}}\mathbf E_{\theta_{i,t}}[f^{i,t}_{q}(\bm \tau_{iq,t},\theta_{i,t})]$, and $(c)$ results from dropping the negative term.

Thus, for the second term of (\ref{eq.88}), one has
\begin{equation}\label{eq.810}
\begin{aligned}
&2R\sum_{t,i,j,k}\mathbf E_{\mathcal U_t}\Big\|[\bar \epsilon_{ij,t}]_{k}-[\nabla F_{i,t}(x_{\mathcal H,t}^{*})]_{k}\Big\|
\\&\leq2R\sum_{t,i,j,k}\mathbf E_{\mathcal U_t}\Big\|\bar \epsilon_{ij,t}-\nabla F_{i,t}(x_{\mathcal H,t}^{*})\Big\|
\\&\overset{(a)}{\leq}2R\sum_{t,i,j,k}\sqrt{\mathbf E_{\mathcal U_t}\Big\|\bar \epsilon_{ij,t}-\nabla F_{i,t}(x_{\mathcal H,t}^{*})\Big\|^{2}}
\\&\leq 2\sqrt{2}dl_{f}\sqrt{|\mathcal H|}R\Big(\sum_{t,i,j}\left\|\bm x_{ij,t}^{\mathcal H}-x_{t}^{\mathcal H}\right\|+\sum_{t,i,j}\left\|\bm \tau_{ij,t}^{\mathcal H}-x_{t}^{\mathcal H}\right\|\Big)
\\&\quad+2\sqrt{2}dl_{f}\sqrt{|\mathcal H|}R\Big(\sum_{t,i,j}\left\|\bm x_{ij,t}^{\mathcal B}-x_{t}^{\mathcal B*}\right\|+\sum_{t,i,j}\left\|\bm \tau_{ij,t}^{\mathcal B}-x_{t}^{\mathcal B*}\right\|\Big)
\\&\overset{(b)}{\leq} 4\sqrt{2}dl_{f}t_{0}\sqrt{|\mathcal H|}R\sum_{t,i,j}\Big(\left\|\bm x_{ij,t}^{\mathcal H}-x_{t}^{\mathcal H}\right\|+\left\|\bm x_{ij,t}^{\mathcal B}-x_{t}^{\mathcal B*}\right\|\Big)
\\&\quad+2\sqrt{2}dl_{f}t_{0}|\mathcal H|^{2}R\sum_{l=2}^{T}\Big(\left\|x_{l}^{\mathcal H}-x_{l-1}^{\mathcal H}\right\|+\left\|x_{l}^{\mathcal B*}-x_{l-1}^{\mathcal B*}\right\|\Big)
\\&\overset{(c)}{\leq} \mathcal O\left(1+\sum_{t=1}^{T}\alpha_{t}+\sum_{t=1}^{T}\xi_{t}+\Phi_{\mathcal B}(T)\right),\quad\quad\quad\quad\quad\quad\quad
\end{aligned}
\end{equation}
where $(a)$ results from that $(\mathbf E\left\|x\right\|)^{2}\leq\mathbf E\left\|x\right\|^{2}$, $(b)$ follows from the definition of $\bm \tau_{ij,t}$, and $(c)$ results from Lemma \ref{lemma_2}.

Substituting (\ref{eq.89}) and (\ref{eq.810}) into (\ref{eq.88}), one obtains
\begin{equation}\label{eq.811}
\begin{aligned}
S_{4}\leq \mathcal O(1+\sum_{t=1}^{T}\alpha_{t}+\sum_{t=1}^{T}\xi_{t}+\Phi(T)+\Delta F^{\text{sup}}_{T}).
\end{aligned}
\end{equation}

Combining (\ref{eq.13}) and (\ref{eq.16}), by Assumptions \ref{assp.111} and \ref{assp.7}, one has
\begin{align*}
&\mathbf E_{\mathcal U_t}\left\|\bm v_{ij,t}^{k}\right\|^{2}
\\&\leq \mathbf E_{\mathcal U_t}\Big(C\sum_{g=2}^{t}\Theta^{t-g}\left\|\Delta \bm \epsilon^{k}_{ij,g}\right\|+\sum_{g=2}^{t}v_{\mathcal G}^{t-g}\left\|\Delta \bm \epsilon^{k}_{ij,g}\right\|
\\&\quad+\left\|\bm 1_{|\mathcal H_{i}|}[\bar \epsilon_{ij,t}]_{k}\right\|\Big)^{2}
\end{align*}
\vspace{-3em} 
\begin{equation*}
\begin{aligned}
\\&\leq4(12n^{2}d^{2}l_{f}^{2}|\mathcal H|\Lambda^{2}+(\Delta F^{\text{sup}}_{T})^{2})\Big( C^{2}t\sum_{g=2}^{t}\Theta^{2(t-g)}
\\&\quad+t\sum_{g=2}^{t}v_{\mathcal G}^{2(t-g)}+3B_4^{2}|\mathcal H|\Big),
\end{aligned}
\end{equation*}
where the last inequality results from the fact that
\begin{align*}
\mathbf E_{\mathcal U_t}\left\|\Delta \bm \epsilon_{iq,t}\right\|^{2}
&\leq 4l_{f}^{2}\sum_{j\in \mathcal H_{i}}\big(\left\|\bm x_{ij,t}-\bm \tau_{ij,t}\right\|^{2}
\\&\quad+\left\|\bm x_{ij,t-1}-\bm \tau_{ij,t-1}\right\|^{2}+\left\|\bm \tau_{ij,t}-\bm \tau_{ij,t-1}\right\|^{2}\big)
\\&\quad+4(\Delta F^{\text{sup}}_{T})^{2}
\\&\leq 48n^{2}d^{2}l_{f}^{2}|\mathcal H|\Lambda^{2}+4(\Delta F^{\text{sup}}_{T})^{2}.
\end{align*}

Therefore, for term $S_{5}$, one has
\begin{equation}\label{eq.812}
\begin{aligned}
S_{5}&=2\sum_{t,i,j,k}\gamma_{t}\mathbf E_{\mathcal U_t}\left\|[v^{j}_{ij,t}]_{k}\right\|^{2}
\\&\leq\mathcal O(T\sum_{t=1}^{T}\gamma_{t}+T(\Delta F^{\text{sup}}_{T})^{2}\sum_{t=1}^{T}\gamma_{t}).
\end{aligned}
\end{equation}

Taking expectation with respect to $\mathscr U_{T}$ on term $S_{6}$, from (\ref{eq.0000}) and (\ref{eq.04}), one gets
\begin{equation}\label{eq.8133}
\begin{aligned}
\mathbf E_{\mathscr U_{T}}\left[S_{6}\right]&=2dC_{5}R\sum_{t,i,j}\mathbf E\left\|\lambda_{ij,t+1}\right\|+6dB_{2}^{2}\sum_{t,i,j}\gamma_{t}\mathbf E\left\|\lambda_{ij,t+1}\right\|^{2}
\\&\leq \mathcal O(\eta_{T}T^{2}+\eta_{T}^{2}T^{2}\sum_{t=1}^{T}\gamma_{t}).\quad\quad\quad\quad\quad\quad\quad
\end{aligned}
\end{equation}

Combining (\ref{eq.85}), (\ref{eq.86}), (\ref{eq.87}), (\ref{eq.811}), (\ref{eq.812}) and (\ref{eq.8133}) with (\ref{eq.84}) and taking the full expectation on both sides of (\ref{eq.84}), the result (\ref{eq.06}) can be obtained by the following inequality.
\begin{align*}
&\mathbf{E}\left[\mathcal R_{\mathcal H}(T)\right]
\\& \leq l_{f}\sum_{t=1}^{T}\sum_{i=1}^{N}\sum_{j\in\mathcal H_{i}}\mathbf E\left\|x_{ij,t}-x^{*}_{ij,t}\right\|
\\&\leq l_{f}|\mathcal H|\sqrt{T}\sqrt{\sum_{t=1}^{T}\sum_{i=1}^{N}\sum_{j\in\mathcal H_{i}}\mathbf E\left\|x_{ij,t}-x^{*}_{ij,t}\right\|^{2}}.
\end{align*}
\hfill$\blacksquare$

\end{appendix}

\vspace{-1.5em}

\end{document}